\newtheorem{theorem}{Theorem}[section]
\newtheorem{corollary}{Corollary}[section]
\newtheorem{lemma}{Lemma}[section]
\newtheorem{proposition}{Proposition}[section]
\newtheorem*{theorem*}{Theorem}
\newtheorem*{remark*}{Remark}
\newtheorem{remark}{Remark}
\def\reals{{\mathbb{R}}}
\def\pds{{\mathrm{p}_d \mathrm{sin}}}
\def\WW{{\mathcal{U}_{\lambda}}}
\def\BB{{B}}
\def\LL{{L}}
\newcommand{\be}{\begin{eqnarray}}
\newcommand{\ee}{\end{eqnarray}}
\newcommand{\ben}{\begin{eqnarray*}}
\newcommand{\een}{\end{eqnarray*}}
\def\MM{{\mathrm{M}}}
\DeclareMathOperator{\dist}{dist}
\DeclareMathOperator{\diam}{diam}
\DeclareMathOperator{\supp}{supp}
\DeclareMathOperator{\pdms}{{\mathrm{p}_{{\it d}-1}\mathrm{sin}}}
\def\Supp{{\supp(\mu)}}
\newcommand{\cmplx}{\mathbb C}
\newcommand{\nats}{\mathbb N}
\def\RR{{\mathbb{R}}}
\def\HH{{H}}
\newcommand{\di}{\, \mathrm{d}}
\newcommand{\fdi}{\mathrm{d}}
\def\HH{{H}}
\def\Span{{\mathrm{Span}}}
\def\ud{{\mathrm{d}}}
\def\RR{{\mathbb{R}}}
\def\Tbe{{\mathrm{T_{ube}}}}
\def\MIn{{\mathrm{min}}}
\def\Supp{{\mathrm{supp}(\mu)}}
\def\DIst{{\mathrm{dist}}}
\begin{document}
\title{High-Dimensional Menger-Type Curvatures - Part II:\\
$d$-Separation and a Menagerie of Curvatures}

\author{Gilad Lerman\thanks{The authors have been supported by
NSF grant \#0612608}~\thanks{Contact:\{lerman,whit0933\}@umn.edu} \
and J. Tyler Whitehouse$\hspace{0.01cm}^{*} \hspace{-0.03cm}^{\dag}$
}

\maketitle

\begin{abstract}
We estimate $d$-dimensional least squares approximations of an
arbitrary $d$-regular measure $\mu$ via discrete curvatures of $d+2$
variables. The main result bounds the least squares error of
approximating $\mu$ (or its restrictions to balls) with a $d$-plane
by an average of the discrete Menger-type curvature over a
restricted set of simplices. Its proof is constructive and even
suggests an algorithm for an approximate least squares $d$-plane. A
consequent result bounds a multiscale error term (used for
quantifying the approximation of $\mu$ with a sufficiently regular
surface) by an integral of the discrete Menger-type curvature over
all simplices. The preceding paper (part I) provided the opposite
inequalities of these two results. This paper also demonstrates the
use of a few other discrete curvatures which are different than the
Menger-type curvature. Furthermore, it shows that a curvature
suggested by L{\'e}ger (Annals of Math, 149(3), p.~831-869, 1999)
does not fit within our framework.
\end{abstract}
%

\noindent AMS Subject Classification (2000):
60D05, 49Q15, 42C99\\ \\
\noindent Keywords: least squares $d$-planes, multiscale geometry,
Ahlfors regular measure, uniform rectifiability, polar sine,
Menger-type curvature.
%

\section{Introduction}
We propose an approximate construction for the least squares
$d$-plane of a $d$-regular measure (defined below) and an  estimate
for the corresponding least squares error via Menger-type curvatures
as well as other curvatures.

There are two different kinds of motivation for this investigation.
The first one concerns clustering of data points sampled around
intersecting $d$-planes~\cite{spectral_theory, spectral_applied}
(and even more general $d$-dimensional manifolds~\cite{ACL-kscc}).
Here a possible approach is to assign local discrete curvatures (of
at least $d+2$ points) that distinguish the different clusters
(i.e., they are ``sufficiently small'' within each cluster and
``large'' for points of mixed clusters). Indeed, some of the results
of this paper and the preceding one show that within each cluster
the discrete curvatures discussed here (or its
variants~\cite{spectral_applied, LW-volume}) are tightly controlled
on average by the $d$-dimensional least squares error of the
cluster.

Another use of the current study is in the constructive
approximation of sufficiently regular $d$-dimensional surfaces for
$d$-regular measures~\cite{Jo90,DS91,DS93}, and we will expand on it
later.

\subsection*{Basic Setting}
Our current setting includes a real separable Hilbert space $H$ of
possibly infinite dimension, an intrinsic dimension $d \in \nats$,
where $d < \dim (H)$, and a $d$-regular measure $\mu$ on $H$ (or
equivalently, a $d$-dimensional Ahlfors regular measure). That is, a
locally finite Borel measure $\mu$ with a constant $C \geq 1$ such
that for all $x\in\Supp$ and $0 < t \leq\diam(\Supp)$ with the
corresponding closed ball $B(x,t)$:
\begin{equation}\label{equation:measure-comp}C^{-1}\cdot
t^d\leq\mu(B(x,t))\leq C\cdot t^d.\end{equation}
The smallest constant $C$ satisfying
equation~\eqref{equation:measure-comp} is denoted by $C_{\mu}$ and
is called the {\em regularity constant} of $\mu$.

This abstract setting allows estimates that are independent of the
ambient dimension. In Section~\ref{section:conclusion} we discuss
possible extensions of some of the estimates below to other
settings.

\subsection*{Simplices and their Menger-type Curvature}

We often work with $(d+1)$-simplices in $H$. We represent such a
simplex with vertices $x_0, \ldots, x_{d+1}$ by $ X = (x_0,\ldots
,x_{d+1}) \in H^{d+2}$. We usually do not distinguish between the
$(d+1)$-simplex and its representation $X$.
The following functions of $X$ will be frequently used. The largest
and smallest edge lengths of $X$ are denoted by $\diam (X)$ and
$\min(X)$ respectively. The $d$-content $\MM_{d+1}(X)$ is the
multiplication of the $(d+1)$-volume of the simplex $X$ by $(d+1)!$.
Equivalently, it is the $(d+1)$-volume of any parallelotope
generated by the vertices of $X$. The polar sine of $X = (x_0,\ldots
,x_{d+1})$ at the vertex $x_i$, $0\leq i\leq d+1$, is the function
$$
\pds_{x_i}(X) = \frac{\MM_{d+1}(X)}{\displaystyle \prod_{\substack{0\leq j\leq d+1\\
j\not=i}}\|x_j-x_i\|}\, ,
$$
where it is zero if the denominator is zero.  When $d=1$, the polar
sine (hereafter abbreviated to p-sine) reduces to the absolute value
of the ordinary sine of the angle between two vectors.

The primary discrete curvature we have worked with  is
$$
c_{\mathrm{MT}}(X) =
\sqrt{\frac{1}{d+2}\cdot\frac{1}{\diam(X)^{d(d+1)}}\sum^{d+1}_{i=0}
\pds^2_{x_i}(X)}\,,$$
where it is zero if the denominator is zero. Other possible
curvatures are introduced in Section~\ref{section:curvatures} as
well as in~\cite{LW-volume}.

\subsection*{Least Squares $d$-Planes via Curvatures}

We fix a location $x \in \supp(\mu)$ and a scale $0 < t \leq
\diam(\supp(\mu))$ with the corresponding ball $B=B(x,t)$ in $H$ (by
a ball we always mean the closed ball). The scaled least squares
error of approximating $\mu$ by $d$-planes at $B$ has the form
$$
\beta_2(B) =\sqrt{\inf_{d \text{-planes } L}
\int_B\left(\frac{\dist(y,L)} {\diam(B)}\right)^2 \frac{\di \mu
(y)}{\mu (B)}} \,,
$$
where it is zero if $\mu(B)=0$. If $B=B(x,t)$, then we often denote
$\beta_2(B)$ by $\beta_2(x,t)$.
Any $d$-plane $L$ that minimizes the above infimum is referred to as
a least squares $d$-plane for the restriction of $\mu$ onto $B$,
i.e., $\mu|_B$. Our main result concerns the estimation of
$\beta_2(B)$ by Menger-type curvatures. Before stating it we develop
the specific integrals of curvatures to be used and review a
previous result.

We arbitrarily fix $0<\lambda<2$ and define the following set of
simplices in $B = B(x,t)$ with sufficiently large edge lengths
\begin{equation}\label{equation:big-U}U_{\lambda}(B)=\left\{X\in
B^{d+2}:\min(X)\geq\lambda\cdot t\right\}.\end{equation}
We then define the following squared curvature of $\mu$ at
$B=B(x,t)$ as a function of $x$ and $t$:
\begin{equation}
\nonumber
c_{\mathrm{MT}}^2(x,t,\lambda)=\int_{U_{\lambda}(B(x,t))}c_{\mathrm{MT}}^2(X)\di\mu^{d+2}(X)\,.\end{equation}

In~\cite{LW-part1} we controlled from below the local least squares
approximation error for $\mu$ as follows.
%
\begin{proposition}\label{proposition:upper-bound-big-scale}There exists a constant
$C_0=C_0(d,C_{\mu})\geq1$ such that
\begin{equation*}\label{equation:upper-big-scale}c_{\mathrm{MT}}^2(x,t,\lambda) \leq
\frac{C_0}{\lambda^{d(d+1)+4}}\cdot\beta_2^2(x,t)\cdot\mu(B(x,t)),\end{equation*}
for all $\lambda>0$,  $x\in\Supp,$ and $0<t\leq\diam(\Supp)$.
\end{proposition}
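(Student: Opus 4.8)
The plan is to fix a least squares $d$-plane $L$ for the restriction $\mu|_B$ (such an $L$ exists; alternatively replace it by a minimizing sequence throughout) and to prove the pointwise bound
\[
c_{\mathrm{MT}}^2(X)\ \le\ \frac{C(d)}{\lambda^{d(d+1)+2}\,t^{\,d(d+1)+2}}\ \sum_{j=0}^{d+1}\dist(x_j,L)^2
\qquad\text{for all }X=(x_0,\dots,x_{d+1})\in U_{\lambda}(B).
\]
Granting this, one integrates over $U_{\lambda}(B)$, enlarges the domain of integration to $B^{d+2}$ (legitimate since the right-hand side is nonnegative), and uses the symmetry of $\mu^{d+2}$ in its $d+2$ coordinates to collapse $\int_{B^{d+2}}\sum_{j}\dist(x_j,L)^2\,\di\mu^{d+2}$ to $(d+2)\,\mu(B)^{d+1}\int_{B}\dist(y,L)^2\,\di\mu(y)$. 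By the definition of $\beta_2$ and the choice of $L$, $\int_{B}\dist(y,L)^2\,\di\mu(y)=\beta_2^2(B)\,\diam(B)^2\,\mu(B)\le 4t^2\,\beta_2^2(B)\,\mu(B)$, while the regularity estimate $\mu(B)\le C_{\mu}t^{d}$ gives $\mu(B)^{d+1}\,t^{-d(d+1)}\le C_{\mu}^{d+1}$, which absorbs every remaining power of $t$. Collecting constants, and using $0<\lambda<2$ to pass from the exponent $d(d+1)+2$ to the stated $d(d+1)+4$ at the cost of a factor $\lambda^{2}<4$, then yields the proposition; when $\lambda\ge 2$ there is nothing to prove, since $\min(X)\le\diam(X)\le 2t$ makes $U_{\lambda}(B)$ $\mu^{d+2}$-null.

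The hard part will be the pointwise bound. Let $P$ be the orthogonal projection of $H$ onto the $d$-dimensional linear subspace parallel to $L$, put $Q=I-P$, and note that $\dist(z,L)=\|Q(z-p_{0})\|$ for any $p_{0}\in L$, so $\|Q(x_j-x_i)\|\le\dist(x_j,L)+\dist(x_i,L)$ for all $i,j$. Fix an index $i$ and enumerate the vectors $\{x_j-x_i:j\neq i\}$ as $w_{1},\dots,w_{d+1}$. Expanding the exterior product $w_{1}\wedge\cdots\wedge w_{d+1}$ by substituting $w_{k}=Pw_{k}+Qw_{k}$ one factor at a time writes it as a sum of $d+1$ terms --- the $k$-th having $Qw_{k}$ as a factor and its remaining factors among $Pw_{1},\dots,Pw_{k-1},w_{k+1},\dots,w_{d+1}$, hence of norm at most $\|Qw_{k}\|\prod_{j\neq k}\|w_{j}\|$ --- plus the term $Pw_{1}\wedge\cdots\wedge Pw_{d+1}$, which vanishes as a wedge of $d+1$ vectors in the $d$-dimensional range of $P$. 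Therefore
\[
\MM_{d+1}(X)\ \le\ \sum_{j=1}^{d+1}\|Qw_{j}\|\prod_{k\neq j}\|w_{k}\| ,
\]
so, dividing by $\prod_{k}\|w_{k}\|=\prod_{j\neq i}\|x_j-x_i\|$ and invoking $\|x_j-x_i\|\ge\min(X)\ge\lambda t$,
\[
\pds_{x_i}(X)\ \le\ \frac{1}{\lambda t}\sum_{j\neq i}\bigl(\dist(x_j,L)+\dist(x_i,L)\bigr)\ \le\ \frac{C'(d)}{\lambda t}\Bigl(\sum_{j=0}^{d+1}\dist(x_j,L)^{2}\Bigr)^{1/2}
\]
by Cauchy--Schwarz. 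Squaring, summing over $i=0,\dots,d+1$, and using $\diam(X)\ge\min(X)\ge\lambda t$ in the formula for $c_{\mathrm{MT}}$ then yields the claimed pointwise estimate, with $C(d)=(d+2)\,C'(d)^{2}$.

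So the whole argument rests on this single geometric inequality controlling the $(d+1)$-content $\MM_{d+1}(X)$ by the distances of the vertices to $L$; everything else is Fubini plus bookkeeping with $\mu(B)\le C_{\mu}t^{d}$, and I anticipate no real difficulty there. Finally the degenerate cases should be recorded: if $\mu(B)=0$ both sides vanish ($\beta_{2}(B)=0$ by convention, and $\mu^{d+2}(U_{\lambda}(B))\le\mu(B)^{d+2}=0$), while on $U_{\lambda}(B)$ with $0<\lambda<2$ every edge length exceeds $\lambda t>0$, so $c_{\mathrm{MT}}$ is given by its formula and no zero-denominator convention is invoked.
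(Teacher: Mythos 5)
Your argument is correct, and all the bookkeeping checks out: the wedge-product telescoping with $Pw_k+Qw_k$ plus Hadamard's inequality gives exactly the needed bound $\pds_{x_i}(X)\le(\lambda t)^{-1}\sum_{j\neq i}\bigl(\dist(x_j,L)+\dist(x_i,L)\bigr)$ on $U_\lambda(B)$, and the subsequent Fubini step, the identity $\int_B\dist(y,L)^2\,\di\mu=\beta_2^2\,\diam(B)^2\,\mu(B)$, the regularity bound $\mu(B)^{d+1}t^{-d(d+1)}\le C_\mu^{d+1}$, and the trade of $\lambda^{2}<4$ to reach the stated exponent $d(d+1)+4$ are all handled properly (including the cases $\lambda\ge 2$ and the harmless degenerate conventions). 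Note that this paper does not prove Proposition~\ref{proposition:upper-bound-big-scale} itself but cites part I for it; your proof is the natural counterpart of the machinery used here (it is precisely the upper-bound analogue of Lemma~\ref{lemma:hey-diddle-diddle}), so it is essentially the expected route rather than a genuinely different one.
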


Here we establish the following opposite inequality. Its
constructive proof also suggests an algorithm for an approximate
least squares $d$-plane.
\begin{theorem}\label{theorem:doozy}There
exist constants $0<\lambda_0=\lambda_0(d,C_{\mu})<2$ and
$C_1=C_1(d,C_{\mu})\geq1$ such that
$$\beta_2^2(x,t) \cdot\mu(B(x,t)) \leq C_1\cdot c_{\mathrm{MT}}^2(x,t,\lambda_0) \,,$$
for all $x\in\Supp$ and $0<t\leq\diam(\Supp)$.
\end{theorem}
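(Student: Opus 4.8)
The plan is to prove the theorem by constructing an explicit $d$-plane $L$ whose squared $L^2$-error against $\mu|_B$ is controlled by $c_{\mathrm{MT}}^2(x,t,\lambda_0)/\mu(B)$, which suffices since $\beta_2^2(x,t)$ is the infimum over all $d$-planes. The key identity to exploit is the geometric meaning of the polar sine: for a simplex $X=(x_0,\dots,x_{d+1})$, the quantity $\|x_{d+1}-x_i\|\cdot\pds_{x_i}(x_0,\dots,x_d,x_{d+1})$ (suitably normalized) measures the distance from $x_{d+1}$ to the $d$-plane spanned by $x_0,\dots,x_d$, up to factors comparable to products of edge lengths. Concretely, $\operatorname{dist}(x_{d+1}, \operatorname{Span}(x_0,\dots,x_d)) = \MM_{d+1}(X)/\MM_d(x_0,\dots,x_d)$, so $c_{\mathrm{MT}}^2(X)$ encodes, after dividing by $\diam(X)^{d(d+1)}$, a comparison between the squared distance of one vertex to the affine span of the others and powers of the edge lengths. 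On $U_{\lambda}(B)$ all edge lengths are $\geq \lambda t$ and $\leq 2t$, so all these normalizing products are comparable (with constants depending only on $d$ and $\lambda$), and hence integrating $c_{\mathrm{MT}}^2(X)$ over $U_\lambda(B)$ is comparable to
\[
\frac{1}{t^{d(d+1)+2}}\int_{B^{d+1}\cap(\text{large edges})}\ \operatorname{dist}\!\big(y,\operatorname{Span}(z_0,\dots,z_{d-1},z_1)\big)^2\,\text{(symmetrized)}\,,
\]
i.e., an average over $(d+1)$-tuples of points of the squared distance of one point to the affine span of the others.

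The next step is to turn that averaged quantity into the least squares error of a single plane. Here is where the construction enters: choose $d+1$ points $z_0,\dots,z_d$ in $\supp(\mu)\cap B$ in ``general position'' — meaning the $d$-content $\MM_d(z_0,\dots,z_d)$ is $\gtrsim (\lambda t)^d$, which is possible because $\mu$ is $d$-regular and hence not concentrated near any $(d-1)$-plane (a standard consequence of the lower regularity bound: a $\delta$-neighborhood of a $(d-1)$-plane has $\mu$-measure $O(\delta t^{d-1})\cdot\#\text{(balls)}$, which is $o(\mu(B))$ for $\delta$ small). One picks these points greedily: $z_0$ arbitrary in a definite sub-ball, then $z_1$ far from $z_0$, then $z_2$ far from $\operatorname{Span}(z_0,z_1)$, and so on; regularity guarantees at each stage that a definite fraction of $\mu$-mass lies at the required distance. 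Let $L$ be the $d$-plane through $z_0,\dots,z_d$. Then for $\mu$-a.e. $y\in B$,
\[
\operatorname{dist}(y,L)^2 \;\lesssim\; \frac{\MM_{d+1}(z_0,\dots,z_d,y)^2}{\MM_d(z_0,\dots,z_d)^2}\;\lesssim\;\frac{1}{(\lambda t)^{2d}}\,\MM_{d+1}(z_0,\dots,z_d,y)^2\,,
\]
and $\MM_{d+1}(z_0,\dots,z_d,y)^2$ is exactly (up to the comparable edge-length normalizations, using that $z_0,\dots,z_d$ are fixed with controlled geometry and $y\in B$) a term of the form $\diam(X)^{d(d+1)}\,\pds^2_{\,\cdot}(X)$ appearing in $c_{\mathrm{MT}}^2$. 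Integrating in $y$ against $\mu|_B$ and then averaging the bound over all admissible choices of $(z_0,\dots,z_d)$ (the averaging is what produces the full integral $\int_{U_\lambda(B)}c_{\mathrm{MT}}^2(X)\,\di\mu^{d+2}(X)$, after inserting a redundant $(d+2)$-nd integration variable) yields
\[
\int_B \operatorname{dist}(y,L)^2\,\di\mu(y)\;\lesssim\;\frac{t^{d(d+1)+2}}{\mu(B)^{d+1}}\int_{U_{\lambda_0}(B)}c_{\mathrm{MT}}^2(X)\,\di\mu^{d+2}(X)\cdot\text{(correction)}\,,
\]
and then dividing by $\diam(B)^2\,\mu(B)=(2t)^2\mu(B)$ and using $\mu(B)\asymp t^d$ gives $\beta_2^2(x,t)\,\mu(B)\lesssim c_{\mathrm{MT}}^2(x,t,\lambda_0)$ after the powers of $t$ cancel.

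The main obstacle I anticipate is the general-position step and the bookkeeping of constants around it. One must choose $\lambda_0$ small enough (depending on $d$ and $C_\mu$) that a \emph{quantitatively positive} fraction of $(d+1)$-tuples in $B$ spans a $d$-plane with $d$-content $\gtrsim(\lambda_0 t)^d$; this requires a genuine non-degeneracy lemma for $d$-regular measures — essentially that $\mu$ gives small mass to tubular neighborhoods of lower-dimensional affine subspaces — with explicit dependence on $C_\mu$. This lemma is the technical heart; it likely proceeds by a covering argument (cover the $\delta$-tube around an affine $(d-1)$-plane inside $B$ by $\asymp (t/\delta)\cdot(t/(\delta))^{0}$-many balls of radius $\delta$... more precisely by $O((t/\delta)^{d-1})$ balls of radius $\delta$ along the plane, each contributing $\lesssim C_\mu\delta^d$, giving total mass $\lesssim C_\mu\,\delta\,t^{d-1}\ll\mu(B)$). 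The remaining difficulty is purely algebraic: verifying that on $U_{\lambda_0}(B)$ every product of edge lengths occurring in a $\pds$ is pinched between $(\lambda_0 t)^{d}$ and $(2t)^{d}$, so that replacing $\diam(X)$, $\min(X)$, and individual edge lengths by $t$ throughout costs only a constant $C_1=C_1(d,C_\mu)$; this is routine but must be done carefully to make sure the final inequality is dimensionally consistent and that $C_1$ does not blow up as a function of anything other than $d$ and $C_\mu$. Once the non-degeneracy lemma is in hand, assembling the pieces — pick $z_0,\dots,z_d$, bound $\operatorname{dist}(y,L)$ by a ratio of $d$-contents, recognize the numerator as a curvature term, integrate and average — is straightforward, and the constructive choice of $(z_0,\dots,z_d)$ is precisely the promised approximate least-squares $d$-plane algorithm.
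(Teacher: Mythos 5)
The first half of your plan is essentially the paper's own opening move: the greedy selection of well-spread points via tube estimates is Proposition~\ref{proposition:rainy-may-day} (built on Proposition~\ref{proposition:artube}), and your ``average over admissible tuples and take the best one'' is the Chebychev selection of the set $\mathcal{E}(\rho_1)$ in Subsection~\ref{subsection:big-proof}; together these give exactly the paper's bound~\eqref{equation:almost-but-no}, i.e.\ control of $\int\dist(y,L)^2\di\mu(y)$ over $B(x,t)$ \emph{minus} the union of the separated balls. The genuine gap is what happens on the rest of $B(x,t)$. Because $c_{\mathrm{MT}}^2(x,t,\lambda_0)$ integrates only over $U_{\lambda_0}(B)$ of~\eqref{equation:big-U}, i.e.\ over simplices with $\min(X)\geq\lambda_0\cdot t$, once you fix the tuple $(z_0,\dots,z_d)$ defining your plane $L$, every $y$ with $\|y-z_i\|<\lambda_0\cdot t$ for some $i$ produces a simplex with a short edge that never appears in the curvature integral (cf.~\eqref{equation:no-gut}). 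Your pointwise inequality $\dist(y,L)\leq\MM_{d+1}(z_0,\dots,z_d,y)/\MM_d(z_0,\dots,z_d)$ is correct, but for such $y$ it cannot be matched against any term of $c_{\mathrm{MT}}^2(x,t,\lambda_0)$. These excluded neighborhoods are not negligible: since $z_i\in\Supp$, lower regularity gives each ball $B(z_i,\lambda_0\cdot t)$ mass comparable to $\mu(B(x,t))$, and the trivial bound $\dist(y,L)\lessapprox\lambda_0\cdot t$ there is useless because the right-hand side of the theorem can be arbitrarily small (e.g.\ $\mu$ supported very close to a $d$-plane). Averaging over admissible tuples does not repair this: the final estimate requires a single plane, hence a single tuple, and for that fixed tuple the neighborhoods of its own vertices are invisible to the truncated curvature integral.

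This missing piece is where most of the paper's work goes. Beyond the selection you describe, the paper performs a second Fubini--Chebychev selection (the set $\mathcal{A}(\rho_2)$ of~\eqref{equation:math-cal-a} and Lemma~\ref{lemma:lemma-1}) in which the $i$-th vertex is integrated out; it then uses the $(d+2)$-nd ball of the $d$-separated family to produce a substitute vertex $\widetilde{x}_{d+1}$ (Lemma~\ref{lemma:there-is-no-other}), so that for $y$ in the offending ball $B_i$ the simplex built on the modified tuple has all edges at least $\lambda_0\cdot t$ and is therefore seen by the curvature; finally it transfers the estimate back to $L$ by comparing the two planes via a dihedral-angle argument (Proposition~\ref{proposition:sappy-homilies}, equations~\eqref{equation:slappy-nappy}--\eqref{equation:frankenwine}), which itself needs the separately selected bound~\eqref{equation:dog-day-after} on $\dist(\widetilde{x}_{d+1},L[\widetilde{X}(d+1)])$. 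None of this is constant bookkeeping; it is the part of the argument forced by the min-edge truncation in $U_{\lambda_0}$, and your proposal as written does not address it. (Your non-degeneracy lemma is indeed needed and is Section~\ref{section:geom-prop} of the paper; note it must give $d$-separation of $d+2$ balls, one more than your $d+1$ points, precisely so that the substitute vertex above is available.)
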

%


In~\cite{LW-volume} we extend both
Proposition~\ref{proposition:upper-bound-big-scale} and
Theorem~\ref{theorem:doozy} to more general measures (e.g.,
truncated Gaussian distributions around Lipschitz graphs). We
actually use there different scalings of both the discrete
Menger-type curvature and the underlying integral, which allow us to
replace $U_\lambda(B)$ by $B^{d+2}$.


%
%
%
%
%

\subsection*{Underlying Regular Surfaces via Curvatures}

The problem of approximating $\mu$ by a $d$-plane and estimating the
corresponding least squares error extends to approximating $\mu$ by
a ``sufficiently regular $d$-dimensional surface'' and estimating
the ``cumulative error'' of multiscale least squares approximations.
We demonstrate those notions for an arbitrarily fixed ball $B
\subseteq H$.

The cumulative error of multiscale least squares approximation, or
equivalently the Jones-type flatness~\cite{Jo90,DS91,DS93,LW-part1},
is defined for the restriction of $\mu$ to $B$ as follows:
\begin{equation*}
\nonumber J_{2}(\mu|_B)= \int_B\int^{\diam(B)}_0
\beta^2_2(x,t)\frac{\fdi t}{t}\di \mu (x)\,.\end{equation*}
The technical notion of a sufficiently smooth $d$-dimensional
surface, or more precisely an $\omega$-regular surface for an $A_1$
weight $\omega$, is presented in~\cite{DS91, DS93} (see also
\cite[Section~6]{LW-part1}).

David and Semmes~\cite{DS91, DS93} showed that there exists such a
surface containing the restriction of the support of $\mu$ to $B$ if
and only if there is a finite uniform bound on the quantities $\{
J_{2}(\mu|_{B'})/\mu(B')\}_{B' \subseteq B}$. If any of the two
equivalent conditions hold, then $\mu|_B$ is called uniformly
rectifiable.

Uniform rectifiability thus asks for a nice parametrization of
$\mu|_B$ (by a sufficiently regular $d$-dimensional surface). In
fact, the supporting theory~\cite{DS91, DS93} suggests a multiscale
construction of such a parametrization by least squares
approximations at different balls in $B$ (in the spirit
of~\cite{Jo90}). Furthermore the smoothness of the underlying
surface, i.e., the sizes of its parameters, can be controlled by the
least uniform bound of $\{ J_{2}(\mu|_{B'})/\mu(B')\}_{B' \subseteq
B}$. This is analogous to characterizing the smoothness of functions
by quantities based on wavelet coefficients or similar
Littlewood-Paley estimates as demonstrated in~\cite{D91_book}
and~\cite[Subsection~1.3]{DS93}.

While uniform rectifiability is tailored for the restricted setting
of $d$-regular measures, it is a rich ground for various notions of
quantitative geometry, which can be further extended and applied in
more general and practical settings. For example, insights of curve
construction of uniform rectifiability have been used
in~\cite{Lerman03,Lerman07Bioinformatics, LMM08} for rather
practical measures. The idea was to avoid approximation of the whole
support of the given measure by a curve (as done in uniform
rectifiability), but parametrize only a large fraction of the
support~\cite{Lerman03}, i.e., allowing an outlier component, and
moreover constructing a strip around the main
curve~\cite{Lerman07Bioinformatics, LMM08} in order to ``cover'' a
noisy component around it.

Our current work extends uniform rectifiability by using discrete
curvatures instead of least squares approximations of different
scales and locations. For this purpose we use the following squared
Menger-type curvature of $\mu|_B$:
$$c_{\mathrm{MT}}^2 \left(\mu|_B\right)
=\int_{B^{d+2}}c_{\mathrm{MT}}^2(X)\di\mu^{d+2}(X)\,.$$
We also extend this definition as follows.  We arbitrarily fix
$0<\lambda\leq 1$ and define the following set of well scaled
simplices in $B$:
\begin{equation}\label{equation:big-W}W_{\lambda}(B)=\{X\in
B^{d+2}: \ \min(X)\geq\lambda\cdot\diam(X)>0\}.\end{equation}
The squared Menger-type curvature of $\mu|_B$ with respect to the
parameter
 $\lambda$ has the form
$$c_{\mathrm{MT}}^2\left(\mu|_B, \lambda\right)
=\int_{W_{\lambda}(B)}c_{\mathrm{MT}}^2(X)\di\mu^{d+2}(X)\,.$$
Clearly, for any $\lambda>0$ we have that
\begin{equation*}\label{equation:part-tot}c_{\mathrm{MT}}^2\left(\mu|_B, \lambda\right) \leq
c_{\mathrm{MT}}^2\left(\mu|_B\right).\end{equation*}

In~\cite{LW-part1} we established the following result.
\begin{theorem}\label{theorem:upper-main}
There exists a constant $C_2=C_2(d,C_{\mu})\geq1$ such that
$$c_{\mathrm{MT}}^2\left(\mu|_B\right)\leq C_2\cdot
J_2 \left(\mu|_{6\cdot B}\right)$$
for all balls $B\subseteq H$.\end{theorem}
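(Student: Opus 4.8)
\emph{Proof proposal.} The plan is to estimate $c_{\mathrm{MT}}^2(\mu|_B)=\int_{B^{d+2}}c_{\mathrm{MT}}^2(X)\,\di\mu^{d+2}(X)$ by splitting $B^{d+2}$ according to the scale ratio $\min(X)/\diam(X)$: fix a threshold $\lambda^{*}\in(0,1]$ (say $\lambda^{*}=\tfrac12$) and write $B^{d+2}=W_{\lambda^{*}}(B)\cup(B^{d+2}\setminus W_{\lambda^{*}}(B))$, the \emph{well-scaled} simplices and the \emph{nearly degenerate} ones. Proposition~\ref{proposition:upper-bound-big-scale} is exactly the tool for the well-scaled part after a localization; the nearly degenerate part is the real difficulty.

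For the well-scaled part, set $\lambda'=\lambda^{*}/8$. The geometric input is a localization: if $X$ has a vertex $x_0$, with $\diam(X)=r$ and $\min(X)=m$, then $X\in B(w,\tau)^{d+2}$ for every $w\in B(x_0,\tau-r)$ and every $\tau\ge 2r$, and moreover $X\in U_{\lambda'}(B(w,\tau))$ once also $\tau\le m/\lambda'$. Since $\mu$ is $d$-regular, $\{w\in\Supp:X\in B(w,\tau)^{d+2}\}$ contains $B(x_0,\tau/2)\cap\Supp$, hence has $\mu$-measure $\gtrsim\tau^{d}$, on the range $\tau\in[2r,\min\{m/\lambda',10t\}]$, where $t$ is the radius of $B$; for $X\in W_{\lambda^{*}}(B)$ this range has logarithmic length bounded below by $\log 2$. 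Integrating this inclusion against $\di\mu(w)\,\tau^{-d-1}\di\tau$ and using Tonelli gives
$$\int_{W_{\lambda^{*}}(B)}c_{\mathrm{MT}}^2(X)\,\di\mu^{d+2}(X)\ \lesssim\ \int_{\Supp\cap 6B}\int_{0}^{\diam(6B)}c_{\mathrm{MT}}^2(w,\tau,\lambda')\,\frac{\di\tau}{\tau^{d+1}}\,\di\mu(w).$$
Now Proposition~\ref{proposition:upper-bound-big-scale} bounds $c_{\mathrm{MT}}^2(w,\tau,\lambda')$ by a constant depending only on $d$ and $C_{\mu}$ (with $\lambda'$ fixed) times $\beta_2^2(w,\tau)\,\mu(B(w,\tau))\le C_{\mu}\,\beta_2^2(w,\tau)\,\tau^{d}$, and the factor $\tau^{d}$ cancels the $\tau^{-d-1}$, leaving exactly $J_2(\mu|_{6B})$. (The dilation factor $6$ is the room needed so that all auxiliary balls $B(w,\tau)$ sit inside $6B$; a routine modification is required at the top scale $\tau\sim\diam(\Supp)$, using $\mu(B(w,\tau))\lesssim\tau^{d}$ and the $1/\tau$ scaling of $\beta_2$ there.) Hence $\int_{W_{\lambda^{*}}(B)}c_{\mathrm{MT}}^2\,\di\mu^{d+2}\lesssim J_2(\mu|_{6B})$.

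The remaining term $\int_{B^{d+2}\setminus W_{\lambda^{*}}(B)}c_{\mathrm{MT}}^2(X)\,\di\mu^{d+2}(X)$ is the main obstacle: Proposition~\ref{proposition:upper-bound-big-scale} cannot see simplices with $\min(X)<\lambda^{*}\diam(X)$, and the crude bound $c_{\mathrm{MT}}^2(X)\le\diam(X)^{-d(d+1)}$ is far from summable against $\di\mu^{d+2}$. I would decompose this region dyadically in the degeneracy ratio, $B^{d+2}\setminus W_{\lambda^{*}}(B)=\bigcup_{\ell\ge\ell_0}G_\ell$ with $G_\ell=\{X:2^{-\ell-1}<\min(X)/\diam(X)\le 2^{-\ell}\}$ and $2^{-\ell_0}\sim\lambda^{*}$, and aim for $\int_{G_\ell}c_{\mathrm{MT}}^2(X)\,\di\mu^{d+2}(X)\lesssim 2^{-\varepsilon\ell}\,J_2(\mu|_{6B})$ for some $\varepsilon=\varepsilon(d)>0$, so that the sum over $\ell$ converges. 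On $G_\ell$, if $x_i,x_j$ realize $\min(X)=m\asymp 2^{-\ell}\diam(X)$ and $Y$ denotes the $d$-face of $X$ obtained by deleting $x_j$, then the base-times-height identity $\MM_{d+1}(X)=\MM_{d}(Y)\cdot\dist(x_j,\mathrm{aff}\, Y)$ together with $\dist(x_j,\mathrm{aff}\, Y)\le\|x_j-x_i\|$ shows that every polar sine of $X$ other than the one at $x_j$ is at most $\|x_j-x_i\|^{-1}\dist(x_j,\mathrm{aff}\, Y)$ times the $d$-dimensional polar sine of $Y$ at the corresponding vertex (and $\pds_{x_j}(X)$ is treated symmetrically); in particular, when $Y$ is a well-scaled $d$-configuration the quantity $\dist(x_j,\mathrm{aff}\, Y)$ is controlled by the local least-squares flatness, i.e.\ effectively by $\beta_2$ at the scale $\asymp\diam(X)$. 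Feeding this into the same localization-and-Proposition bookkeeping as above, and using that the collapsed vertex $x_j$ ranges over a ball of $\mu$-mass only $\lesssim(2^{-\ell}\diam(X))^{d}$ — which supplies a surplus factor $2^{-\ell d}$ — should yield the claimed decay in $\ell$, with the nested degeneracies $\min(Y)/\diam(Y)$ handled by iterating the reduction (or an induction on $d$). The crux, which I expect to be genuinely delicate, is the accounting of \emph{which} polar sines in $\sum_k\pds_{x_k}^2(X)$ actually become small when a vertex collapses, so that the gain $2^{-\ell d}$ from the shrinking ball strictly beats the loss from the near-degenerate edge rather than merely breaking even. Combining the two contributions and absorbing all constants into $C_2=C_2(d,C_\mu)$ finishes the proof.
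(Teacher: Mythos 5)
Your first half is sound, and it is essentially the standard reduction: for well-scaled simplices you localize $X\in W_{\lambda^*}(B)$ into balls $B(w,\tau)$ with $X\in U_{\lambda'}(B(w,\tau))$, pick up a logarithmic length of admissible scales, apply Tonelli, and then invoke Proposition~\ref{proposition:upper-bound-big-scale}; this mirrors (in the reverse direction) the Fubini argument of Proposition~\ref{proposition:little-fubini} and is where the dilation $6B$ comes from. Note, however, that the present paper does not prove Theorem~\ref{theorem:upper-main} at all — it is quoted from Part~I — and the entire content of that proof is precisely the part you leave open: the contribution of simplices that are not well-scaled, which Part~I handles with its ``geometric multipoles'' multiscale decomposition.

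For that nearly degenerate part your sketch has a genuine gap. First, the face inequality is misstated: deleting $x_j$ gives $\pds_{x_k}(X)=\bigl(\dist(x_j,L[X(j)])/\|x_j-x_k\|\bigr)\cdot\pdms_{x_k}(X(j))$ for $k\neq j$, so the damping factor at vertex $x_k$ has denominator $\|x_j-x_k\|$, not $\|x_j-x_i\|$; in particular the polar sine at $x_i$ (and at any vertex close to the short edge) carries a denominator as small as $\min(X)\asymp 2^{-\ell}\diam(X)$. Against your measure surplus $2^{-\ell d}$ from integrating $x_j$ over a ball of radius $\approx\min(X)$, this costs a factor $\approx 2^{2\ell}$, i.e.\ a net $2^{-\ell(d-2)}$: the accounting you flag as the crux does indeed merely break even at $d=2$ (and interpolating via $\dist(x_j,L[X(j)])\le\min(X)$ replaces $\beta_2^2$ by lower powers $\beta_2^{2\theta}$, which $J_2$ does not sum). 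Second, $\dist(x_j,L[X(j)])$ is not pointwise controlled by $\beta_2$ at scale $\diam(X)$; one only has a comparison with an arbitrary $d$-plane $L$ whose constant depends on a quantitative non-degeneracy of the face $X(j)$, and iterating this through nested degeneracies is exactly the multiscale bookkeeping the Part~I proof is organized around: near-degenerate simplices must be charged to $\beta_2$ at the scales of their small vertex clusters, not only at scale $\diam(X)$, and it is the summability of $\beta_2^2$ over all scales inside $J_2(\mu|_{6B})$ that absorbs them. As written, your decomposition by the single ratio $\min(X)/\diam(X)$ with a single-scale charge does not close, so the main step of the theorem remains unproven in your proposal.
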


Using the constant $\lambda_0$ of Theorem~\ref{theorem:doozy} we
establish here an opposite inequality to
Theorem~\ref{theorem:upper-main} as follows.
\begin{theorem}\label{theorem:funky-salmon}There
exists a constant $C_3=C_3(d,C_{\mu})\geq1$ such that
\begin{equation*}\label{equation:funky-salmon}
J_2 \left(\mu|_B\right) \leq C_3 \cdot
c_{\mathrm{MT}}^2\left(\mu|_{3B}, \lambda_0 / 2 \right)
\end{equation*}for all balls $B\subseteq
H$ with $\diam(B)\leq\diam(\Supp)$.\end{theorem}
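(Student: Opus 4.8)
The strategy is to derive the multiscale (Jones-type) statement from the single-scale Theorem \ref{theorem:doozy} by integrating it over all locations and scales, and then to reorganize the resulting double integral of discrete curvatures into the well-scaled integral $c_{\mathrm{MT}}^2(\mu|_{3B},\lambda_0/2)$. First I would fix a ball $B$ with $\diam(B)\le\diam(\Supp)$. For each $x\in\Supp\cap B$ and each $0<t\le\diam(B)$, apply Theorem \ref{theorem:doozy} to the ball $B(x,t)$ to obtain
\begin{equation*}
\beta_2^2(x,t)\cdot\mu(B(x,t))\le C_1\cdot c_{\mathrm{MT}}^2(x,t,\lambda_0)
=C_1\int_{U_{\lambda_0}(B(x,t))}c_{\mathrm{MT}}^2(Y)\di\mu^{d+2}(Y).
\end{equation*}
Dividing by $\mu(B(x,t))$ and using $d$-regularity to replace $1/\mu(B(x,t))$ by a constant multiple of $t^{-d}$, then integrating $\frac{\di t}{t}\di\mu(x)$ over $x\in B$, $0<t\le\diam(B)$, bounds $J_2(\mu|_B)$ by a constant times
\begin{equation*}
\int_B\int_0^{\diam(B)}\frac{1}{t^{d+1}}\int_{U_{\lambda_0}(B(x,t))}c_{\mathrm{MT}}^2(Y)\di\mu^{d+2}(Y)\,\di t\,\di\mu(x).
\end{equation*}

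The core of the argument is then a Fubini/Tonelli interchange to put the integration over the simplex $Y=(y_0,\dots,y_{d+1})$ on the outside. For a fixed simplex $Y$, I need to identify the set of pairs $(x,t)$ for which $Y\in U_{\lambda_0}(B(x,t))$, i.e. $Y\in B(x,t)^{d+2}$ and $\min(Y)\ge\lambda_0 t$. The first condition forces all vertices $y_j$ to lie in $B(x,t)$, so in particular $x$ must lie within distance $t$ of every vertex; the second forces $t\le\min(Y)/\lambda_0$, while to have the vertices genuinely spread out at scale $t$ one gets the complementary bound $t\gtrsim\diam(Y)$. Thus the effective range of $t$ is $c\,\diam(Y)\lesssim t\lesssim\diam(Y)/\lambda_0$, a dyadic-length window, and the $x$-integration is over $\mu(\bigcap_j B(y_j,t))\le\mu(B(y_0,t))\le C_\mu t^d$. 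Carrying out the $\di t$ integral against $t^{-d-1}$ over this window yields a factor comparable to $\diam(Y)^{-d}\cdot t^d\sim 1$ (up to powers of $\lambda_0$ absorbed into the constant), and the net effect is that the whole expression is bounded by a constant times $\int c_{\mathrm{MT}}^2(Y)\di\mu^{d+2}(Y)$ over the relevant simplices.

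It remains to check the domain of the surviving simplex integral. Since $x\in B$ and each $y_j\in B(x,t)$ with $t\lesssim\diam(Y)/\lambda_0\le\diam(B)/\lambda_0$, the vertices stay in a controlled dilate of $B$; choosing the dilation factor correctly (and using $\diam(B)\le\diam(\Supp)$ so that the scales involved are legitimate) confines $Y$ to $(3B)^{d+2}$. The lower constraint $t\gtrsim\diam(Y)$ together with $\min(Y)\ge\lambda_0 t$ gives $\min(Y)\ge(\lambda_0/2)\diam(Y)$ for an appropriate choice of the implied constants, so $Y\in W_{\lambda_0/2}(3B)$. This produces exactly $J_2(\mu|_B)\le C_3\, c_{\mathrm{MT}}^2(\mu|_{3B},\lambda_0/2)$ with $C_3=C_3(d,C_\mu)$.

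The main obstacle I anticipate is the bookkeeping in the Fubini step: making the two-sided bound $c\,\diam(Y)\le t\le\diam(Y)/\lambda_0$ precise (the upper bound is immediate from $\min(Y)\ge\lambda_0 t$, but the lower bound — which is what converts $U_{\lambda_0}$-simplices into $W_{\lambda_0/2}$-simplices and makes the $\di t/t$ integral converge at $t\to 0$) requires arguing that if $t$ is much smaller than $\diam(Y)$ then not all $y_j$ can simultaneously lie in $B(x,t)$. One must also track all the $\lambda_0$-dependent constants carefully so they can be swallowed into $C_3$, and verify that the dilation of $B$ needed to contain every admissible $Y$ is no larger than $3B$; both are routine once the geometry of the constraint set is pinned down, but they are where the proof has to be done honestly rather than waved through.
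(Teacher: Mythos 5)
Your plan is correct and follows essentially the same route as the paper: the paper first deduces $\beta_2^2(x,t)\lessapprox c_{\mathrm{MT}}^2(x,t,\lambda_0)/t^d$ from Theorem~\ref{theorem:doozy} and $d$-regularity, and then proves exactly your Fubini step as Proposition~\ref{proposition:little-fubini}, using the same observations that the admissible $t$ lie in $\left[\max_i\|x_i-x\|,\min(X)/\lambda_0\right]$ (so $\diam(X)\leq 2t$, forcing $X\in W_{\lambda_0/2}(3B)$) and that the $x$-set has measure $\lessapprox t^d$. The only cosmetic difference is the order of the inner integrations (the paper integrates $t$ first, getting $\max_i\|x_i-x\|^{-d}/d$, then $x$; you bound the $x$-measure first and get a bounded logarithmic $t$-window), which changes nothing of substance.
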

We thus obtain that the Jones-type flatness of $\mu|_B$ can be
replaced by the squared Menger-type curvature of $\mu|_B$. That is,
multiscale least squares approximations for uniform rectifiability,
in particular for surface reconstruction, could be replaced by using
Menger-type curvatures of various kinds of simplices.

\subsection*{A Curvature of L{\'e}ger and Related Methodology}


A previous curvature for studying the rectifiability and uniform
rectifiability of measures for $d\geq1$ was proposed by
L{\'e}ger~\cite{Leger}. However, his analysis was done only for
$d=1$ and when we try to generalize it to $d>1$ we find that his
curvature controls a quantity which characterizes a property weaker
than uniform rectifiability (see Subsections~\ref{subsection:p-q}
and~\ref{subsection:leger}).

Despite the fact that L{\'e}ger's curvature is unsuitable for our
purposes, his basic analysis is instrumental in this paper. Similar
to his work in~\cite{Leger}, the main ingredients of our analysis
include repeated applications of both Fubini's Theorem and
Chebychev's inequality as well as various metric inequalities and
identities. However, we needed to develop additional analytic and
combinatoric propositions for the case where $d>1$. In particular,
we have generalized the separation of points by pairwise distances
employed in~\cite{Leger} to a $d$-dimensional separation of
simplices (see Section~\ref{section:geom-prop}).  This
$d$-separation plays a fundamental role in the proof of
Theorem~\ref{theorem:doozy}, which is the main result of this paper.
Weaker notions of $d$-separation have been applied earlier by David
and Semmes~\cite[Lemma~5.8]{DS91} and
Tolsa~\cite[Lemma~8.2]{tolsa_riesz_rect}.

\subsection*{Organization of the Paper}
This paper is organized as follows. Section~\ref{section:context}
provides the preliminary notation and some related elementary
propositions. Section~\ref{section:geom-prop} states and proves a
geometric proposition regarding the $d$-dimensional separation of
points in the support of an arbitrary $d$-regular measure $\mu$ on
$H$. Sections~\ref{section:doozy-1} and~\ref{section:fubani} contain
the proofs of Theorem~\ref{theorem:doozy} and
Theorem~\ref{theorem:funky-salmon} respectively.
Section~\ref{section:curvatures} discusses other possible curvatures
and their relation to the curvature $c_{\mathrm{MT}}$, as well as
some problems with the curvature suggested by
L{\'e}ger~\cite{Leger}. Finally, Section~\ref{section:conclusion}
suggests new and mostly open directions to extend this work.

\section{Basic Notation and  Definitions}
\label{section:context}
\subsection{Main Context and Notational Conventions} \label{subsec:main_context}
We fix a real separable Hilbert space $\HH$, and denote its inner
product,  induced norm, and the dimension (possibly infinite) by
$<\cdot,\cdot>$, $\|\cdot\|,$ and $\dim(H)$ respectively. For $m \in
\nats$, we denote the Cartesian product of $m$ copies of $H$ by
$H^m$.

If $A\subseteq H$, then we denote its diameter by $\diam(A)$. If
$\mu$ is a measure on $H$, then its support is denoted by
$\supp(\mu)$. For $\mu$ and $A\subseteq H$, we denote the
restriction of $\mu$ to $A$ by $\mu|_A$.

We denote the closed ball in $\HH$, centered at $x\in\HH$ and of
radius $t$, by $B(x,t)$. If both the center and radius are
indeterminate, then we use the notation $B$.

We summarize some notational conventions  as follows. We typically
denote
scalars with values at least  $1$ by upper-case plain letters, e.g.,
$C$;
arbitrary integers  by lower case letter, e.g., $i,j$ and large
integers by $M$ and $N$; and arbitrary real numbers by lower-case
Greek or lower-case letters, e.g., $\rho$, $r$.

We reserve  $x$, $y$, and $z$ to denote elements of $H$; $X$ to
denote elements of $H^m$ for $m \geq 3$; $L$ for a complete affine
subspace of $H$ (possibly a linear subspace);
 $V$ to denote a complete linear subspace of $H$; $B$ to denote closed balls in
 $H$; and $t$ for arbitrary length
scales, in particular, radii of balls (we always assume that
$t\in\RR$, even when writing $0 < t \leq \diam(\supp(\mu))$ and when
$\supp(\mu)$ is unbounded).

We say that a real-valued function $f$ is controlled by a
real-valued function $g$, which we denote by $f \lessapprox g$, if
there exists a positive constant $C$ such that $f \leq C \cdot g$.
Similarly, $f$ is comparable to $g$, denoted by $f \approx g$, if $f
\lessapprox g$ and $g \lessapprox f$. The constants of control or
comparability may depend on some arguments of $f$ and $g$, which we
make sure to indicate if unclear from the context.

More specific notation and definitions commonly used throughout the
paper, as well as related propositions, are described in the
following subsections according to topic.

\subsection{Elements of $H^{n+1}$ and Corresponding
Notation}\label{subsection:element-notation}Fixing  $n \geq 1$, we
denote an element of $\HH^{n+1}$ by $X=(x_0,\ldots,x_{n})$. For $0
\leq i \leq n$, we let $(X)_i=x_i$ denote the projection of $X$ onto
its $i^{\text{th}}$ $H$-valued {\em coordinate}. The $0^{\text{th}}$
coordinate $(X)_0=x_0$ is special in many of our calculations.

For $0\leq i\leq n$ and $X=(x_0,\ldots,x_{n})\in\HH^{n+1}$, let
$X(i)$ be the following element of $H^n$:
\begin{equation}\label{equation:removal} X(i)=(x_0,\ldots,
x_{i-1},x_{i+1},\ldots,x_{n}),
\end{equation}
that is, $X(i)$ is the projection of $X$ onto $\HH^{n}$ that
eliminates its $i^{\text{th}}$ coordinate. Furthermore, for $n\geq2$
and $0\leq i<j\leq n$,  let $X(i;j)$ be the following element of
$H^{n-1}$:\begin{equation}\label{equation:double-removal}
X(i;j)=(x_0,\ldots,x_{i-1},x_{i+1},\ldots,x_{j-1},x_{j+1},\ldots,x_n).\end{equation}
If $1\leq i\leq n$, $X(i)\in H^n$, and $y\in H$, we form $X(y,i)\in
H^{n+1}$ as follows:
\begin{equation}
\nonumber X(y,i)=(x_0,\ldots,x_{i-1},y,x_{i+1},\ldots,x_n)\,.
\end{equation}Finally, if $y\in H$ and $z\in H$, $n\geq2$, and $1\leq i<j\leq n$, then we define the
elements $X(y,i;j),\, X(i;z,j)\in H^n$ and $X(y,i;z,j)\in H^{n+1}$
by the following formulas:
\begin{equation}\label{equation:double-remove-single-add}X(y,i;j)=(x_0,\ldots,x_{i-1},y,x_{i+1},
\ldots,x_{j-1},x_{j+1},\ldots,x_n),\end{equation}
\begin{equation}\label{equation:double-remove-single-add-last}X(i;z,j)=(x_0,\ldots,
x_{i-1},x_{i+1},\ldots,x_{j-1},z,x_{j+1},\ldots,x_n),\end{equation}
and\begin{equation}\label{equation:double-remove-double-add}X(y,i;z,j)=(x_0,\ldots,x_{i-1},y,x_{i+1},
\ldots,x_{j-1},z,x_{j+1},\ldots,x_n).\end{equation}

\begin{remark}\label{remark:notation-convention}We usually take $n=d+1,$ and
without referring directly to $X\in H^{d+2}$ we often denote
elements of $H^{d+1}$ by $X(i)$ for some $1\leq i\leq d+1$  and
elements of $H^d$ by $X(i;j)$ where $0\leq i<j\leq d$. \end{remark}
\subsection{Notation for Subsets of $\HH$}
\label{section:basic-definitions-1}For a ball $B(x,r)$ and
$\gamma>0$, let $\gamma\cdot B(x,r)=B(x,\gamma\cdot r)$. If $x\in H$
and $r_1, r_2
>0$, then we define the annulus
$$
A(x,r_1,r_2)=\left\{y\in H: r_1 < \|x-y\|\leq r_2 \right\} =
B(x,r_2) \setminus B(x,r_1).
$$

For  $n \geq 1$ and $X\in H^{n+1}$, we say that $X$ is {\em
non-degenerate} if the set $\left\{x_1-x_0,\ldots,
x_{n}-x_0\right\}$ is linearly independent, and we say that $X$  is
degenerate otherwise. For $X\in H^{d+1}$ let $L[X]$ denote the
affine subspace of $H$ of minimal dimension containing the vertices
of $X$, i.e., the coordinates of $X$, and let $V[X]$ be the linear
subspace parallel to $L[X]$.

If $V$ is a linear subspace of $H$, we denote its orthogonal
complement by $V^{\perp}$. If $L$ is a complete affine subspace of
$\HH$ and $x\in\HH$, we denote the distance between $x$ and $L$ by
$\dist(x,L)$.
If  $n\leq\dim(H)$, we use the phrase $n$-plane to refer to an
$n$-dimensional affine subspace of $H$.

If $\eta\geq0$ and $L$ is a complete affine subspace of $H$, we
define the tube of height $\eta$ on $L$ to be 
\begin{equation*}
\label{equation:def-tube} \nonumber \Tbe(L,\eta)=\left\{y\in\HH:
\dist(y,L)\leq \eta\right\}.\end{equation*}


%
%
%
%
%
%

\subsection{Elementary Properties of $d$-regular Measures
}\label{section:$d$-regular-prop}

We describe here basic properties of the $d$-regular measure $\mu$.
The following bound (extending the upper bound of $d$-regularity for
all radii) is straightforward:
\begin{equation}\label{equation:upper-regularity}
\mu(B(x,t)) \leq C_\mu \cdot t^d \ \text{ for all } \ x \in
\supp(\mu)\textup{ and }t>0.
\end{equation}

The following lemma is derived immediately from
equation~(\ref{equation:measure-comp}).
\begin{lemma}\label{lemma:$d$-regular-measure-2}
If $x\in\Supp$, $0<t \leq \diam(\supp(\mu))$, and
$0<s<1/C_{\mu}^{2/d}$, then
\begin{equation}
\label{equation:annulus-ball} \nonumber
\frac{\mu(A(x,s \cdot t, t))}{\mu(B(x,t))}
\geq \left(1-s^d\cdot C^2_\mu\right)>0.
\end{equation}
\end{lemma}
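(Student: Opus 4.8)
The plan is to reduce the annulus measure to a difference of ball measures and then invoke the two-sided regularity bound~\eqref{equation:measure-comp} directly. Since $A(x,s\cdot t,t) = B(x,t)\setminus B(x,s\cdot t)$ and $B(x,s\cdot t)\subseteq B(x,t)$, additivity of $\mu$ gives $\mu(A(x,s\cdot t,t)) = \mu(B(x,t)) - \mu(B(x,s\cdot t))$. Dividing through by $\mu(B(x,t))$ — which is strictly positive because $x\in\supp(\mu)$ and $0<t\leq\diam(\supp(\mu))$, by the lower bound in~\eqref{equation:measure-comp} — yields
$$\frac{\mu(A(x,s\cdot t,t))}{\mu(B(x,t))} = 1 - \frac{\mu(B(x,s\cdot t))}{\mu(B(x,t))}\,.$$

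Next I would bound the ratio $\mu(B(x,s\cdot t))/\mu(B(x,t))$ from above. For the numerator, note $s\cdot t < t \leq \diam(\supp(\mu))$, so the upper bound in~\eqref{equation:measure-comp} (or equivalently the all-radii version~\eqref{equation:upper-regularity}) gives $\mu(B(x,s\cdot t))\leq C_\mu\cdot (s\cdot t)^d = C_\mu\, s^d\, t^d$; for the denominator the lower bound in~\eqref{equation:measure-comp} gives $\mu(B(x,t))\geq C_\mu^{-1}\, t^d$. Hence the ratio is at most $C_\mu^2\, s^d$, so that $\mu(A(x,s\cdot t,t))/\mu(B(x,t)) \geq 1 - C_\mu^2\, s^d$.

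Finally, the hypothesis $0<s<1/C_\mu^{2/d}$ is precisely what is needed to conclude $s^d < C_\mu^{-2}$, i.e. $C_\mu^2\, s^d < 1$, whence $1 - C_\mu^2\, s^d > 0$; this closes the chain of inequalities. There is no genuine obstacle here: the only point requiring any care is verifying that the radii lie in the range where~\eqref{equation:measure-comp} applies — namely $0 < s\cdot t < t \leq \diam(\supp(\mu))$ — which is immediate from the hypotheses, together with the observation that $x$ lies in the support so the denominator $\mu(B(x,t))$ does not vanish.
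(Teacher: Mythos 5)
Your proof is correct and is exactly the argument the paper has in mind: it states the lemma follows immediately from the regularity bounds in equation~\eqref{equation:measure-comp}, and your decomposition $\mu(A(x,s\cdot t,t))=\mu(B(x,t))-\mu(B(x,s\cdot t))$ together with the upper bound on the small ball and the lower bound on the large ball is precisely that immediate derivation. The range checks on the radii and the positivity of the denominator are handled correctly, so there is nothing to add.
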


The following proposition requires a little more work and is
established in~\cite{LW-semimetric}.
\begin{proposition}\label{proposition:artube}
If  $m \in \mathbb{N}$ is such that $1\leq m<d$, $\mu$ is a
$d$-regular measure on $\HH$ with regularity constant $C_{\mu}$,
 $0 \leq \epsilon \leq 1$, and $\LL$ an $m$-dimensional affine subspace of
$\HH$, then for all $x\in\mathrm{supp}(\mu)\cap\LL$ and $0<t\leq
\mathrm{diam}(\mathrm{supp}(\mu))$
\begin{equation*}
\label{eq:artube} \mu(\Tbe(\LL,\epsilon \cdot t )\cap \BB(x,t))\
\leq\ 2^{m+\frac{3\cdot d}{2}}\cdot C_{\mu}\cdot \epsilon^{d-m}
\cdot t^d.\end{equation*}

\end{proposition}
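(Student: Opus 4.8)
The plan is to prove this by a covering argument that uses only the extended upper regularity bound~\eqref{equation:upper-regularity}. If $\mu(B(x,t))=0$ there is nothing to prove. The case $\epsilon=0$ follows from the case $\epsilon>0$ by monotonicity: $\mu(\LL\cap B(x,t))\leq\mu(\Tbe(\LL,\epsilon t)\cap B(x,t))$ for every $\epsilon>0$, while $d-m\geq1$ forces the right-hand side of the asserted inequality to tend to $0$ as $\epsilon\downarrow0$. So assume $0<\epsilon\leq1$. The idea is to project the part of the tube lying in $B(x,t)$ onto $\LL$, cover the resulting subset of the $m$-plane by $\lessapprox\epsilon^{-m}$ balls of radius comparable to $\epsilon t$ (constant depending only on $m$), and bound the $\mu$-mass of each such ball by $\lessapprox C_\mu(\epsilon t)^d$.

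First I would build the net on $\LL$. As $\LL$ is a closed affine subspace of the Hilbert space $\HH$, each $y\in\HH$ has a unique metric projection $\pi(y)\in\LL$; for $y\in\Tbe(\LL,\epsilon t)\cap B(x,t)$ we have $\|y-\pi(y)\|\leq\epsilon t$ and, since $x\in\LL$, $\|\pi(y)-x\|\leq(1+\epsilon)t\leq2t$. Fix a maximal $\epsilon t$-separated subset $\{z_1,\dots,z_N\}$ of $\LL\cap B(x,2t)$. Maximality gives $\LL\cap B(x,2t)\subseteq\bigcup_k B(z_k,\epsilon t)$, so every such $y$ lies within $2\epsilon t$ of some $z_k$, whence
\[
\Tbe(\LL,\epsilon t)\cap B(x,t)\ \subseteq\ \bigcup_{k=1}^{N}B(z_k,2\epsilon t).
\]
Separation gives that the balls $B(z_k,\epsilon t/2)$ have pairwise disjoint interiors and, since $\epsilon\leq1$, are contained in the $m$-dimensional ball $\LL\cap B(x,3t)$; comparing $m$-dimensional volumes inside $\LL$ then yields $N\lessapprox\epsilon^{-m}$ with a constant depending only on $m$.

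Next I would transfer the estimate to $\supp(\mu)$, where regularity applies. Since $\mu\big(\HH\setminus\supp(\mu)\big)=0$, it suffices to bound the $\mu$-measure of the union above intersected with $\supp(\mu)$. Discard every $k$ with $B(z_k,2\epsilon t)\cap\supp(\mu)=\emptyset$; for each remaining $k$ pick $w_k\in B(z_k,2\epsilon t)\cap\supp(\mu)$, so that $B(z_k,2\epsilon t)\subseteq B(w_k,4\epsilon t)$. Applying~\eqref{equation:upper-regularity} at the support point $w_k$ --- valid for every radius, so the size of $4\epsilon t$ is irrelevant --- gives $\mu(B(z_k,2\epsilon t))\leq C_\mu(4\epsilon t)^d$. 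Summing over the at most $N$ surviving indices,
\[
\mu\big(\Tbe(\LL,\epsilon t)\cap B(x,t)\big)\ \leq\ N\cdot C_\mu\,(4\epsilon t)^d\ \lessapprox\ C_\mu\,\epsilon^{d-m}\,t^d ,
\]
with a constant depending only on $m$ and $d$, which is the asserted inequality up to the value of the constant.

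The one delicate point, and where I expect the actual effort to lie, is pinning down the explicit constant $2^{m+\frac{3\cdot d}{2}}$. Because the net $\{z_k\}$ sits on $\LL$ and not on $\supp(\mu)$, the detour through the nearby support points $w_k$ dilates the relevant balls, and recovering the stated constant requires choosing the separation scale of the net and the radii of the auxiliary balls as economically as possible (e.g.\ covering a slightly shrunken $m$-ball in $\LL$, absorbing the $O(\epsilon t)$ discrepancy into the $2\epsilon t$-enlargements, and using the sharp volumetric count for $N$). No conceptual difficulty beyond this accounting is expected; all the geometry sits in the ``project, cover, pass to $\supp(\mu)$, apply~\eqref{equation:upper-regularity}'' chain.
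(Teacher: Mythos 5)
The paper itself does not prove Proposition~\ref{proposition:artube}; it is imported from the companion paper \cite{LW-semimetric}, so there is no in-text argument to compare against. Your ``project, cover, count, pass to $\supp(\mu)$'' scheme is the natural route and it is sound: the reduction of $\epsilon=0$ to $\epsilon>0$ by monotonicity, the inclusion $\Tbe(\LL,\epsilon t)\cap B(x,t)\subseteq\bigcup_k B(z_k,2\epsilon t)$ from a maximal $\epsilon t$-net on $\LL$, the bound $N\lessapprox\epsilon^{-m}$ by an $m$-dimensional volume comparison inside $\LL$, and the transfer to support points $w_k$ before invoking equation~(\ref{equation:upper-regularity}) are all correct. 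This proves the inequality with some constant of the form $C(d,m)\cdot C_{\mu}$.

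The one genuine shortfall is the explicit constant $2^{m+\frac{3d}{2}}\cdot C_{\mu}$ asserted in the statement: your bookkeeping yields roughly $6^m\cdot 4^d\cdot C_{\mu}$, which is strictly larger, and you leave the sharpening as an unexamined accounting step. Two refinements show where the stated constant comes from and close most of the gap. First, since $x\in\LL$ and the metric projection $\pi$ onto $\LL$ is $1$-Lipschitz with $\pi(x)=x$, you have $\pi(B(x,t))\subseteq\LL\cap B(x,t)$, so the net only needs to cover a ball of radius $t$, not $2t$. Second, $y-\pi(y)$ is orthogonal to $\LL$, so if $\|\pi(y)-z_k\|\leq\epsilon t$ then $\|y-z_k\|\leq\sqrt{2}\,\epsilon t$ by Pythagoras rather than $2\epsilon t$; after passing to a support point $w_k\in B(z_k,\sqrt{2}\,\epsilon t)\cap\supp(\mu)$ this gives $\mu(B(z_k,\sqrt{2}\,\epsilon t))\leq C_{\mu}\,(2\sqrt{2}\,\epsilon t)^d=C_{\mu}\cdot 2^{\frac{3d}{2}}\cdot(\epsilon t)^d$, which is exactly the $2^{\frac{3d}{2}}$ factor in the statement; the remaining $2^m$-type factor is the in-plane covering count. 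Whether one lands on precisely $2^{m+\frac{3d}{2}}$ is a matter of the net normalization carried out in \cite{LW-semimetric}; for the uses made of the proposition in this paper (the choices of $\epsilon_n$ and $\epsilon_{\rho}$) any explicit constant of this form would do, so the defect is in matching the literal statement rather than in the method.
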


\subsection{Elementary Properties of the Polar Sine and the Menger-Type Curvature}
\label{section:polar-sine}

If $n\geq1$, $X\in H^{n+2}$, and $1\leq i \leq n+1$, then let
$\theta_i(X)$ denote the elevation angle of $x_i-x_0$ with respect
to $V\left[X(i)\right]$. We note that if $\min(X)>0,$ then
\begin{equation}\label{equation:elevation-sine}\sin(\theta_i(X)) = \frac{\dist(x_i,L[X(i)])}
{\|x_i-x_0\|}\,.
\end{equation}

The $d$-dimensional p-sine satisfies the following product
formula~\cite{LW-semimetric}:
\begin{proposition}\label{proposition:product-sine}If
$X=(x_0,\ldots,x_{d+1})\in\HH^{d+2}$ and $1\leq i\leq d+1$, then
\begin{equation*}\label{equation:product-sine}
\pds_{x_0}(X)=\sin\left(\theta_i(X)\right) \cdot
\pdms_{x_0}\left(X(i)\right).
\end{equation*}
\end{proposition}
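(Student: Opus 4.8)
\textbf{Proof proposal for Proposition~\ref{proposition:product-sine}.}

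The plan is to reduce the identity to the two definitions that enter it: the definition of the polar sine as a ratio of $d$-content to a product of edge lengths, and the elevation-angle formula~\eqref{equation:elevation-sine}. First I would dispose of the degenerate cases. If $\min(X)=0$ then two vertices of $X$ coincide; one checks directly that both sides vanish (the left side by the convention that $\pds$ is zero when its denominator is zero, and the right side because either $X(i)$ still has a repeated pair, forcing $\pdms_{x_0}(X(i))=0$, or the repeated pair involves $x_i$ or $x_0$, forcing $\sin(\theta_i(X))=0$). If $\min(X)>0$ but $X$ is degenerate, then $\MM_{d+1}(X)=0$ so the left side is zero, and on the right side $x_i-x_0$ lies in $V[X(i)]$, so $\dist(x_i,L[X(i)])=0$ and hence $\sin(\theta_i(X))=0$ by~\eqref{equation:elevation-sine}. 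So we may assume $X$ is non-degenerate with $\min(X)>0$, and in particular $x_i\notin L[X(i)]$.

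The core step is the base-times-height decomposition of the $d$-content. Write $M=\MM_{d+1}(X)$ for the $(d+1)$-volume of the parallelotope spanned by the $d+1$ vectors $\{x_j-x_0:1\le j\le d+1\}$, and $M'=\MM_d(X(i))$ for the $d$-volume of the parallelotope spanned by $\{x_j-x_0:1\le j\le d+1,\ j\ne i\}$, which is a $d$-volume inside the $d$-dimensional linear space $V[X(i)]$ (of dimension exactly $d$ by non-degeneracy of $X(i)$, which follows from that of $X$). The standard base-times-height identity for parallelotopes gives
\begin{equation*}
M = M' \cdot \dist\bigl(x_i - x_0,\ V[X(i)]\bigr) = M' \cdot \dist\bigl(x_i,\ L[X(i)]\bigr),
\end{equation*}
the second equality because translating by $x_0$ identifies the affine distance to $L[X(i)]$ with the linear distance to $V[X(i)]$. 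Now divide both sides by $\prod_{1\le j\le d+1,\ j\ne i}\|x_j-x_0\|$. By the definition of the polar sine at the vertex $x_0$ (applied to the $(d+1)$-simplex $X$ in dimension $d$, and to the $d$-simplex $X(i)$ in dimension $d-1$), the left side becomes $\pds_{x_0}(X)\cdot\|x_i-x_0\|$ and the right side becomes $\pdms_{x_0}(X(i))\cdot\dist(x_i,L[X(i)])$. Rearranging and invoking~\eqref{equation:elevation-sine} to rewrite $\dist(x_i,L[X(i)])/\|x_i-x_0\|$ as $\sin(\theta_i(X))$ yields the claim.

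The only genuinely delicate point is the base-times-height identity for parallelotopes in a Hilbert space: one must know that the $(d+1)$-volume of the parallelotope on $v_1,\dots,v_{d+1}$ equals the $d$-volume of the parallelotope on $v_1,\dots,\widehat{v_i},\dots,v_{d+1}$ times the distance from $v_i$ to the span of the remaining vectors. In finite dimensions this is the cofactor expansion of a Gram determinant (equivalently, $\det(\text{Gram})$ factors through the Schur complement of the block corresponding to the $v_j$, $j\ne i$); since all the vectors lie in the finite-dimensional span of $\{x_j-x_0\}$, the Hilbert-space setting reduces immediately to that finite-dimensional fact, so this is routine once stated carefully. Everything else is bookkeeping with the definitions already recorded in Subsection~\ref{section:polar-sine}. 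I expect no substantive obstacle beyond being careful about the degenerate cases and about the convention that both sides are set to zero when a denominator vanishes.
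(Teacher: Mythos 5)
Your argument is correct: the base-times-height factorization $\MM_{d+1}(X)=\MM_d(X(i))\cdot\dist(x_i,L[X(i)])$ combined with the definitions of $\pds_{x_0}$, $\pdms_{x_0}$ and equation~\eqref{equation:elevation-sine} gives exactly the stated identity, and your handling of the degenerate cases is adequate (the only loose end being the harmless convention for $\theta_i(X)$ when $x_i=x_0$). Note that this paper does not prove Proposition~\ref{proposition:product-sine} at all --- it is imported from~\cite{LW-semimetric} --- and your proof is the standard argument one would expect there, so there is no substantive divergence to report.
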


Proposition~\ref{proposition:product-sine} and
equation~(\ref{equation:elevation-sine}) imply the following lower
bound for the p-sine.

\begin{lemma}\label{lemma:hey-diddle-diddle}If $x\in H$, $0<t<\infty$, and  $X\in B(x,t)^{d+2}$ is such
that $$\MM_d\left(X(i)\right)\geq \omega\cdot t^d\textup{  for some
} 0<\omega\leq 1 \textup{ and }1\leq i\leq d+1,$$
then$$\pds_{x_0}(X)\geq\frac{\omega}{2^{d+1}}\cdot\frac{\DIst(x_i,L[X(i)])}{t}.$$
\end{lemma}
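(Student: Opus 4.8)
The plan is to combine the product formula for the polar sine (Proposition~\ref{proposition:product-sine}) with the formula \eqref{equation:elevation-sine} for the elevation sine, and then control the remaining $(d-1)$-dimensional polar sine factor $\pdms_{x_0}(X(i))$ from below. First I would write, using Proposition~\ref{proposition:product-sine},
\begin{equation*}
\pds_{x_0}(X)=\sin\!\left(\theta_i(X)\right)\cdot\pdms_{x_0}\!\left(X(i)\right),
\end{equation*}
and then by \eqref{equation:elevation-sine},
\begin{equation*}
\sin\!\left(\theta_i(X)\right)=\frac{\dist(x_i,L[X(i)])}{\|x_i-x_0\|}\ \geq\ \frac{\dist(x_i,L[X(i)])}{2t},
\end{equation*}
since all vertices of $X$ lie in $B(x,t)$ and hence $\|x_i-x_0\|\le 2t$. (Note that if $\dist(x_i,L[X(i)])>0$ then $\|x_i-x_0\|>0$, so this step is legitimate.) It remains to show $\pdms_{x_0}(X(i))\ge\omega/2^{d}$.

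For the polar sine of $X(i)$ at $x_0$, recall from the definition that
\begin{equation*}
\pdms_{x_0}\!\left(X(i)\right)=\frac{\MM_{d}(X(i))}{\displaystyle\prod_{\substack{0\le j\le d+1\\ j\ne 0,\,j\ne i}}\|x_j-x_0\|}\ \geq\ \frac{\MM_{d}(X(i))}{(2t)^{d}}\ \geq\ \frac{\omega\cdot t^d}{(2t)^d}=\frac{\omega}{2^d},
\end{equation*}
using again that each edge length $\|x_j-x_0\|\le 2t$ (there are exactly $d$ factors in the product), together with the hypothesis $\MM_d(X(i))\ge\omega\cdot t^d$. (Again $\MM_d(X(i))>0$ forces each $\|x_j-x_0\|>0$, so no denominator issue arises.) Multiplying the two displayed lower bounds gives
\begin{equation*}
\pds_{x_0}(X)\ \geq\ \frac{\dist(x_i,L[X(i)])}{2t}\cdot\frac{\omega}{2^d}=\frac{\omega}{2^{d+1}}\cdot\frac{\dist(x_i,L[X(i)])}{t},
\end{equation*}
as claimed.

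I do not expect any real obstacle here: the only points needing care are the degenerate cases (ensuring that when $\dist(x_i,L[X(i)])>0$ or $\MM_d(X(i))>0$ the relevant norms are nonzero, so that the quotient formulas \eqref{equation:elevation-sine} and the polar-sine definition actually apply rather than falling into their "zero if denominator is zero" clause), and correctly counting that the product in $\pdms_{x_0}(X(i))$ has exactly $d$ edge-length factors so the power of $2t$ is $d$ and not $d+1$. If one wanted to present the argument even more atomically, one could instead iterate Proposition~\ref{proposition:product-sine} all the way down, peeling off one elevation sine at a time and bounding each by $1$ except the last, but the single application above together with the direct estimate on $\MM_d(X(i))/(2t)^d$ is cleaner.
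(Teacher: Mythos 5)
Your proposal is correct and follows exactly the route the paper intends: the lemma is stated there as a direct consequence of Proposition~\ref{proposition:product-sine} and equation~(\ref{equation:elevation-sine}), and your argument simply fleshes out that derivation, with the correct count of $d$ edge-length factors bounded by $2t$ and the bound $\MM_d(X(i))\geq\omega\cdot t^d$ giving $\pdms_{x_0}(X(i))\geq\omega/2^d$. The handling of the degenerate cases is also fine, since the claimed inequality is trivial when $\dist(x_i,L[X(i)])=0$.
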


The definition of the p-sine implies the following generalization of
the one-dimensional law of sines: If $X=(x_0,\ldots,x_{d+1})\in
H^{d+2}$ is such that $\min(X)>0,$ then
\begin{equation}\label{equation:law-of-sines}\frac{\pds_{x_i}(X)}{\displaystyle\prod_{\substack {0\leq s<r\leq d+1\\s,r\not=i}}\|x_s-x_r\|}
=\frac{\pds_{x_j}(X)}{\displaystyle\prod_{\substack{0\leq \ell<q\leq
d+1\\\ell, q\not=j}}\|x_{\ell}-x_q\|} \ \ \textup{  for all }0\leq
i<j\leq d+1.\end{equation}

Finally, we recall the following expression for
$c_{\mathrm{MT}}^2(\mu|_B)$ which was established
in~\cite{LW-part1}:
\begin{equation*}\label{equation:integral-simplification-1}
c_{\mathrm{MT}}^2\left(\mu|_B\right)=\int_{B^{d+2}}\frac{\pds_{x_0}^2(X)}
{\diam(X)^{d(d+1)}}\di\mu^{d+2}(X).
\end{equation*}

\subsection{Jones-Type Flatness for $1\leq p<\infty$}\label{subsection:betas}

For any fixed $1\leq p<\infty$, $x\in H$ and $0<t<\infty$, we define
the $d$-dimensional Jones'  numbers~\cite{DS91} as follows:
\begin{equation*}\label{equation:continuous-beta-1}\beta_p(x,t)=\begin{cases}
\displaystyle\inf_{d-\textup{planes }
L}\left(\int_{B(x,t)} \left(\frac{\dist(y,L)}{2\cdot
t}\right)^p\frac{\fdi\mu(y)}{\mu(B(x,t))}\right)^{1/p},
&\textup{ if }\mu(B(x,t))>0;\\
\quad\quad\quad\quad\quad\quad\quad\quad\quad\quad 0,&\textup{ if
}\mu(B(x,t))=0.\end{cases}\end{equation*}

For any fixed $1\leq p<\infty$ and any ball $B$ in $H,$ we define
the continuous local Jones-type flatness as follows:
\begin{equation*}\label{equation:jone-flat-int}
J_p(\mu|_B)= \int_B\int^{\diam(B)}_0 \beta^2_p(x,t)\frac{\fdi
t}{t}\di \mu (x).\end{equation*}

\section{On $d$-Separation of $d$-Regular Measures}\label{section:geom-prop}
We introduce here a notion of $d$-dimensional separation of
$(d+1)$-simplices and show that there are many such separated
simplices in $\Supp^{d+2}$. Specifically, we show that independently
of $x\in\Supp$ and $0<t\leq\diam(\Supp)$ there exists a
``sufficiently large'' amount of $(d+1)$-simplices, $X\in
[B(x,t)\cap\Supp]^{d+2}$, whose $d$-dimensional faces,
$\{X(i)\}_{i=0}^{d+1}$, are ``sufficiently large''. We refer to this
property as $d$-separation of the measure $\mu$ and also refer to
the corresponding simplices as $d$-separated. Similar notions were
already applied by David and Semmes~\cite[Lemma~5.8]{DS91} and
Tolsa~\cite[Lemma~8.2]{tolsa_riesz_rect}.

\subsection{$n$-Separated Simplices}\label{subsection:bus-time}Let $X\in H^{d+2}$ with $\diam(X)>0$. We say that $X$ is
{\em $1$-separated for}  $\omega>0$ if
$$\frac{\min(X)}{\diam(X)}\geq \omega\,.$$We say that $X$ is
{\em $d$-separated  for}  $\omega>0$ if
$$\frac{\min_{0\leq i\leq d+1}\ \MM_d(X(i))}{\diam^d(X)}\geq\omega\,.$$
More generally,  we say that  $X$ is {\em $n$-separated for}
$\omega>0$ and $1<n<d$ if the minimal $n$-content through its
vertices scaled by $\diam^n(X)$ is larger than $\omega$. We
typically do not mention the constant $\omega $, and we just say
$n$-separated if $\omega$ is clear from the context.

We note that the $n$-separation of an element $X$ implies the
$j$-separation for all $1\leq j< n$.  For example, given a
$d$-separated element $X$, using the product formula for contents we
have that
\begin{equation*}\label{equation:beasley}\omega\cdot\diam^d(X)\leq\min_{0\leq i\leq
d+1}\,\MM_d(X(i))\leq\min(X)\cdot\diam^{d-1}(X).\end{equation*}Hence,
$X$ is $1$-separated for  $\omega$.

\subsection{$n$-Separated Balls and
Measures}\label{subsection:separation-beginning}

Let $B(x,t)\subseteq H$,  $m,n\in\nats$, $m\geq n\geq 1$, and
$\omega>0$. We say that a collection of $m+1$ balls,
$\{B_i\}_{i=0}^m$, is {\em $n$-separated in $B(x,t)$ for $\omega$}
if$$\bigcup_{0\leq i\leq m}B_i\subseteq B(x,t),$$ and any $n+1$
points drawn without repetition from any sub-collection of $n+1$
distinct balls is $n$-separated for $\omega$. That is,
\begin{equation*}\min_{\widetilde{X}\in\,\prod_{i\in I}B_i}
\MM_n(\widetilde{X})\,\geq\omega\cdot t^n,\textup{ for each set } I
\textup{ of } n+1 \textup{ distinct indices in }
\{0,\ldots,m\}.\end{equation*}

We extend this definition to $d$-regular measures in the following
way. For $x\in\Supp$ and $0<t\leq\diam(\Supp)$, we say that $\mu$ is
{\em $n$-separated in $B(x,t)$} (for $0<\delta<1$ and $\omega>0$) if
there exist $(n+2)$ balls, $\{B_i\}_{i=0}^{n+1},$ which are
$n$-separated (for $\omega$) in $B(x,t)$, centered on $\supp(\mu)$
and satisfy
$$\min_{0\leq i\leq n+1}\frac{\diam(B_i)}{2\cdot t}\geq\delta.$$
We show here that $\mu$ is $d$-separated at all scales and locations
in the following sense.
\begin{proposition}\label{proposition:rainy-may-day}There exist
$0<\delta_{\mu}=\delta_{\mu}(d,C_{\mu}) < 1$ and
$\omega_{\mu}=\omega_{\mu}(d,C_{\mu})>0$ such that for any ball
$B(x,t)\subseteq H$ with $x\in\Supp$ and $0<t\leq\diam(\Supp),$ the
following property is satisfied: There exists a $d$-separated
collection of $d+2$ balls, $\{B(x_i,\delta_{\mu}\cdot
t)\}_{i=0}^{d+1},$ contained in $B(x,t)$ as well as centered on
$\Supp$.\end{proposition}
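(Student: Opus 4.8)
The plan is to construct the $d+2$ balls inductively, choosing their centers $x_0,\ldots,x_{d+1}$ on $\Supp$ one at a time so that each new center is ``sufficiently far'' from the affine span of the previously chosen centers, and then to take the radii to be a common small multiple $\delta_\mu t$ of $t$. First I would set $x_0 = x$ (or any point of $\Supp$ inside a slightly shrunk ball). Having chosen $x_0,\ldots,x_{j-1}$ for some $1 \leq j \leq d+1$, let $L_{j-1}$ denote the affine subspace they span, which has dimension at most $j-1 < d$. The key step is to find $x_j \in \Supp \cap B(x, t/2)$ (say) with $\dist(x_j, L_{j-1}) \geq \kappa \cdot t$ for a constant $\kappa = \kappa(d, C_\mu) > 0$. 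This is exactly where Proposition~\ref{proposition:artube} does the work: the tube $\Tbe(L_{j-1}, \kappa t)$ around the low-dimensional subspace $L_{j-1}$ has $\mu$-measure at most $C(d)\cdot C_\mu \cdot \kappa^{d-(j-1)} \cdot t^d \leq C(d) \cdot C_\mu \cdot \kappa \cdot t^d$ inside $B(x,t)$ (using $j-1 \leq d-1$ so the exponent $d-m \geq 1$), whereas by Lemma~\ref{lemma:$d$-regular-measure-2} the annulus $A(x, st, t/2)$ — or more simply $B(x,t/2) \setminus B(x, st)$ together with the lower regularity bound — carries $\mu$-measure bounded below by a positive constant multiple of $t^d$. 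Choosing $\kappa$ small enough (depending only on $d$ and $C_\mu$) so that the tube measure is strictly smaller than this lower bound, there must exist a point $x_j$ of $\Supp$ in $B(x,t/2)$ that lies outside the tube; that is the new center.

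Once all of $x_0,\ldots,x_{d+1} \in \Supp \cap B(x,t/2)$ are chosen with $\dist(x_j, L_{j-1}) \geq \kappa t$ for $1 \leq j \leq d+1$, I would convert these elevation bounds into a lower bound on the $d$-contents $\MM_d(X(i))$ for every $i$. For $i = d+1$ this is immediate from the product formula for contents (Proposition-type identity used in the excerpt): $\MM_d(x_0,\ldots,x_d)$ is a product of successive distances to affine spans, each at least $\kappa t$, so $\MM_d(X(d+1)) \geq (\kappa t)^d$. For a general face $X(i)$ one removes $x_i$ rather than $x_{d+1}$; here I would reorder the vertices and re-run the same successive-projection estimate, or alternatively observe that any $d+1$ of the $d+2$ points still form a chain with each elevation bounded below — the point being that $\dist(x_j, L_{j-1}) \geq \kappa t$ for all $j$ forces the whole point configuration to be in ``general position'' at scale $\kappa t$, so that deleting any single vertex still leaves a non-degenerate $d$-simplex of $d$-content $\gtrsim (\kappa t)^d$. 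This gives $d$-separation of the collection $\{B(x_i, \delta_\mu t)\}$ for a suitable $\omega_\mu = \omega_\mu(d, C_\mu)$, provided $\delta_\mu$ is taken small relative to $\kappa$ (so that perturbing each $x_i$ within its ball $B(x_i, \delta_\mu t)$ changes the contents by a controlled amount, using continuity of $\MM_d$ and the triangle inequality); also $\delta_\mu \leq 1/2$ ensures $B(x_i, \delta_\mu t) \subseteq B(x, t)$ since $x_i \in B(x,t/2)$.

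The main obstacle I anticipate is not the existence of the centers — that is a clean consequence of Proposition~\ref{proposition:artube} versus Lemma~\ref{lemma:$d$-regular-measure-2} — but the bookkeeping needed to pass from ``each center is far from the span of its predecessors'' to ``\emph{every} $d$-face $X(i)$, including $i=0$ which deletes the base point $x_0$, has large $d$-content,'' and then to propagate this robustly to nearby configurations $\widetilde X \in \prod_i B(x_i, \delta_\mu t)$ as the definition of $n$-separated balls demands. The cleanest route is probably: (i) prove the chain estimate $\MM_d(y_0,\ldots,y_d) \geq \prod_{j=1}^d \dist(y_j, \mathrm{aff}(y_0,\ldots,y_{j-1}))$ in general; (ii) show that if $x_0,\ldots,x_{d+1}$ are in ``$\kappa t$-general position'' then so is any sub-collection of $d+1$ of them, with a dimension-dependent loss of a constant factor; (iii) absorb the $\delta_\mu t$-perturbations by a Lipschitz bound for $\MM_d$ on bounded sets, choosing $\delta_\mu \ll \kappa/(\text{poly}(d))$. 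All constants thread through depending only on $d$ and $C_\mu$, as required.
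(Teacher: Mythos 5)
Your plan works for the first $d+1$ centers, but it breaks down at the very last step, and the breakdown is exactly where the paper has to introduce a new idea. You propose to choose $x_j$ far from the affine span $L_{j-1}$ of the previously chosen centers for every $1\leq j\leq d+1$, asserting that $\dim L_{j-1}\leq j-1<d$. For $j=d+1$ this is false: $L_d=\mathrm{aff}(x_0,\ldots,x_d)$ is (generically) a full $d$-plane, Proposition~\ref{proposition:artube} only applies to $m$-planes with $m<d$ (your own exponent $d-m\geq 1$ forces $j\leq d$), and no substitute can exist, since a $d$-regular measure may be supported entirely on a $d$-plane (e.g.\ $\mathcal{H}^d$ restricted to a $d$-plane), in which case there is no point of $\Supp$ at distance $\geq\kappa t$ from $L_d$. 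Consequently your step (ii) has nothing to run on for the faces $X(i)$, $0\leq i\leq d$, i.e.\ precisely those faces that contain the new vertex $x_{d+1}$: their $d$-content is $\dist(x_{d+1},L[X_d(i)])\cdot \MM_{d-1}(X_d(i))$, so what is actually needed is that $x_{d+1}$ be simultaneously far from each of the $d+1$ distinct $(d-1)$-planes $L[X_d(i)]$ spanned by the $d$-element subsets of $\{x_0,\ldots,x_d\}$ --- a condition that neither follows from, nor can be replaced by, distance to the full span.

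This is the content of the paper's Part~II, and it requires more than a single tube estimate: each set $B(x,t)\setminus \Tbe(L[X_d(i)],\epsilon\cdot t)$ only carries a fraction of $\mu(B(x,t))$, and one must intersect $d+1$ such sets. The paper does this by taking the fraction $\rho>d/(d+1)$ and invoking the elementary intersection bound of Lemma~\ref{lemma:inter-measure-1} to conclude the intersection has positive measure, which produces the $(d+2)$-nd center; your proposal contains no analogue of this step. The remainder of your outline is essentially the paper's Part~I (tube bound versus lower regularity to build the first $d+1$ centers, then stability under $\delta_\mu t$-perturbations, which you handle by a Lipschitz bound for $\MM_d$ rather than the paper's projection-comparison estimates --- that part is fine), but without the simultaneous avoidance of the $(d-1)$-planes the proposition as stated is not proved.
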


$\acute{a}$

\subsection{Proof of
Proposition~\ref{proposition:rainy-may-day}}\label{subsection:sep-2}
For simplicity, we look  at the  ball $B(x,2\cdot t)$ (instead of
$B(x, t)$) and reduce Proposition~\ref{proposition:rainy-may-day} to
the following two parts.

{\bf Part~I:} There exist constants
$0<\delta_d=\delta_d(d,C_{\mu})\leq1/2$ and
$\omega_d=\omega_d(d,C_{\mu})>0$ such that for every $x\in\Supp$ and
 $0<t\leq\diam(\Supp),$ there is a collection of $d+1$
balls, $\{B(x_i,\delta_d\cdot t)\}_{i=0}^d$, that are $d$-separated
for $\omega_d$  in $B(x,2\cdot t)$ and whose centers
$\{x_i\}_{i=0}^d$ are in $B(x,t)\cap\Supp$.

{\bf Part~II:} Given the $d$-separated balls for $B(x,2\cdot t)$,
$\{B(x_i,\delta_d\cdot t)\}_{i=0}^d$,  constructed in Part~I, there
is a point $x_{d+1}\in B(x,t)\cap\Supp$ and constants
\begin{equation*}\label{equation:mother-of-all-deltas}0<\widetilde{\delta}_{\mu}=
\widetilde{\delta}_{\mu}(d,C_{\mu})\leq\delta_d \textup{ and
}\widetilde{\omega}_{\mu}=\widetilde{\omega}_{\mu}(d,C_{\mu})>0\end{equation*}
such that the collection of $(d+2)$ balls,
$\{B(x_i,\widetilde{\delta}_{\mu}\cdot t)\}_{i=0}^{d+1},$ is also
$d$-separated in the  ball $B(x,2\cdot t)$.

Parts~I and~II  imply the desired proposition for the ball $B(x,t)$
with $\delta_{\mu}=\widetilde{\delta}_{\mu}/2$ and
$\omega_{\mu}=\widetilde{\omega}_{\mu}/2^d>0$.

We establish Parts~I and~II in
Subsections~\ref{subsubsection:part-une}
and~\ref{subsubsection:part-deux} respectively. An elementary lemma
used in Subsection~\ref{subsubsection:part-deux} is proved
separately in Subsection~\ref{subsection:lema_sum_msr}.

\begin{remark}
The statement of Part~I is practically equivalent to the statement
of \cite[Lemma~8.2]{tolsa_riesz_rect}, which was stated without a
proof. In fact, the formulation of
\cite[Lemma~8.2]{tolsa_riesz_rect} shows how to slightly extend our
statements beyond Ahlfors regularity.
\end{remark}

\subsubsection{Part~I of the Proof}\label{subsubsection:part-une}
Our proof is inductive on $n$. If $n=1$, then let $x_0=x$ and
$\delta_0=\sqrt[d]{\frac{1}{2\cdot C_{\mu}^2}}.$ By
Lemma~\ref{lemma:$d$-regular-measure-2} we have the inequality
$$\mu\left(A\left(x_0,\delta_0\cdot t,t\right)\right)
\geq\frac{1}{2}\cdot\mu(B(x_0,t))>0.$$Then, we arbitrarily fix
$\displaystyle x_1\in A\left(x_0,\delta_0\cdot t,t\right)\cap\Supp$
and set $\displaystyle\delta_1=\delta_0\big/3$. For any
$\widetilde{x}_0\in B(x_0,\delta_1\cdot t)$ and $\widetilde{x}_1\in
B(x_1,\delta_1\cdot t),$ let
$\widetilde{X}_1=(\widetilde{x}_0,\widetilde{x}_1)$. Clearly we have
$$\MM_1(\widetilde{X}_1)=\|\widetilde{x}_0-\widetilde{x}_1\|\geq\delta_1\cdot
t,$$and thus the statement holds for $n=1$, where
$$\omega_1=\delta_1=\frac{1}{3}\cdot\left(\frac{1}{2\cdot
C_{\mu}^2}\right)^{1/d}\leq\frac{1}{2}.$$

Now, for some $1\leq n<d$, we take the induction hypothesis to be
the existence of  $n+1$ points $\{x_0,\ldots,x_n\}\subseteq
B(x,t)\cap\Supp$, and constants $0<\delta_n\leq\frac{1}{2}$ and
$\omega_n>0$ such that the collection of $n+1$ balls
$\{B(x_i,\delta_n\cdot t)\}_{i=0}^n$  is $n$-separated (for
$\omega_n$) in $B(x, 2\cdot t)$. We further assume that $x_0=x$
(which was satisfied for $n=1$).  We will construct a point
$x_{n+1}\in B(x,t)\cap\Supp$ and constants
$0<\delta_{n+1}\leq\delta_n$ and $\omega_{n+1}>0$ such that the
collection of balls $\{B(x_i,\delta_{n+1}\cdot t)\}_{i=0}^{n+1}$ is
$(n+1)$-separated (for $\omega_{n+1}$) in $B(x,2\cdot t)$ for
$\omega_{n+1}$.

For the set of balls of the induction hypothesis,
$\{B(x_i,\delta_n\cdot t)\}_{i=0}^n$, let $X_n=(x_0,\ldots,x_n)$
denote the non-degenerate simplex generated by their centers, and
furthermore let $P$ denote the orthogonal projection of $H$ onto the
$n$-plane $L[X_n]$. Let $\delta$ be an arbitrary constant with
$0<\delta\leq\delta_n\leq1/2$, where we will eventually specify a
choice for $\delta$, i.e., the constant $\delta_{n+1}$ mentioned
above.

We take an arbitrary element
\begin{equation}\label{equation:bus-time-2}\widetilde{X}_n=(\widetilde{x}_0,\ldots,\widetilde{x}_n)\in\prod_{i=0}^n
B(x_i,\delta\cdot t),\end{equation} and for such $\widetilde{X}_n$,
we note that $\{\widetilde{x}_0,\ldots,\widetilde{x}_n\}\subseteq
B\left(x_0,\frac{3}{2}\cdot t\right)$, and thus
\begin{equation}\label{equation:diameter-bound-1}\diam(\widetilde{X}_n)\leq3\cdot t.\end{equation}
Let $\widetilde{P}_{\delta}$ denote the orthogonal projection of $H$
onto the $n$-plane $L[\widetilde{X}_n]$. For convenience, we
suppress the dependence of $P$ and $\widetilde{P}_{\delta}$ on the
elements $X_n$ and $\widetilde{X}_n$ respectively.

The induction step consists of three parts. The first is the
existence of a constant $\epsilon_n>0$ (independent of $x$ and $t$)
and an element $x_{n+1}\in B(x_0,t)\cap\Supp$ such that
\begin{equation}\label{equation:uniform-distance-1}\|x_{n+1}-P\left(x_{n+1}\right)\|\geq\epsilon_n\cdot t.\end{equation}
The second part is the existence of a constant
$0<\delta_{n+1}=\delta_{n+1}(n,\delta_n,\omega_n,\epsilon_n)\leq\delta_n$
such that
\begin{equation}\label{equation:uniform-distance-2}\|x_{n+1}-\widetilde{P}_{\delta_{n+1}}\left(x_{n+1}\right)\|
\geq\frac{2\cdot\epsilon_n}{3}\cdot t.\end{equation} The last  part
of the induction proof is showing that for any
$\widetilde{x}_{n+1}\in B(x_{n+1},\delta_{n+1}\cdot t),$ we have the
lower bound
\begin{equation}\label{equation:uniform-distance-3}\|\widetilde{x}_{n+1}-
\widetilde{P}_{\delta_{n+1}}(\widetilde{x}_{n+1})\|\geq\frac{\epsilon_n}{3}\cdot
t.\end{equation}Then, we conclude the proof of part~I by combining
equation~(\ref{equation:uniform-distance-3}) with the induction
hypothesis and the product formula for contents. That is, we obtain
that for any $1\leq n\leq d$ the family of balls
$\{B(x_i,\delta_{n+1}\cdot t)\}_{i=0}^{n+1}$ is $(n+1)$-separated in
$B(x,2\cdot t)$ for the constant
\begin{equation*}\label{equation:turkey-sammich}\omega_{n+1}=\frac{\epsilon_n\cdot\omega_n}{3}.\end{equation*}

Now, to prove equation~(\ref{equation:uniform-distance-1}) for
$1\leq n<d$, let
\begin{equation*}\label{equation:nadana}\epsilon_n=\left(\frac{1}{\displaystyle2^{\frac{3\cdot
d}{2}+n+1}\cdot C_{\mu}^2}\right)^{1/(d-n)}.\end{equation*}Noting
that $\dim\left(L[X_n]\right)=n<d,$
Proposition~\ref{proposition:artube} implies
that:\begin{equation*}\label{equation:big-kahu}\mu\big(B(x,t)\setminus
\Tbe\left(L[X_n],\epsilon_n\cdot
t\right)\big)>\frac{1}{2}\cdot\mu(B(x,t))>0,\end{equation*}in
particular, $$\left[B(x,t)\cap\Supp\right]\setminus
\Tbe\left(L[X_n],\epsilon_n\cdot t\right)\not=\emptyset.$$We
arbitrarily fix $x_{n+1}\in \left[B(x,t)\cap\Supp\right]\setminus
\Tbe\left(L[x_n],\epsilon_n\cdot t\right)$, and we immediately
obtain equation~(\ref{equation:uniform-distance-1}). We also note
that
\begin{equation}\label{equation:dimaggio}\|x_{n+1}-
P(\widetilde{P}_\delta(x_{n+1}))\|\geq \epsilon_n\cdot
t.\end{equation}This follows from
equation~(\ref{equation:uniform-distance-1}) and the fact that
$P(x_{n+1})$ is the closest point to $x_{n+1}$ in the $n$-plane
$L[X_n]$.

To establish equation~(\ref{equation:uniform-distance-2}), we will
first show that there exists a constant $C_4=C_4(n,\omega_n)>0$ such
that for any $0<\delta\leq\delta_n$ we have the uniform upper bound
\begin{equation}\label{equation:uniform-distance-4}\|P(\widetilde{P}_{\delta}(y))-
\widetilde{P}_{\delta}(y)\|\leq C_4\cdot\delta\cdot t,\textup{ for
all }y\in B(x_0,t).\end{equation} Then, imposing the following
restriction on $\delta$:
\begin{equation}\label{equation:C-500}C_4\cdot\delta\leq\frac{\epsilon_n}{3},\end{equation}
and applying
equations~\eqref{equation:dimaggio}-\eqref{equation:C-500}, we
derive equation~(\ref{equation:uniform-distance-2}) as follows
%
\begin{alignat*}{2}\label{equation:play-drum}
\|x_{n+1}-&\widetilde{P}_{\delta}(x_{n+1})\| \geq
\\
& \bigg|\|x_{n+1}
-P(\widetilde{P}_{\delta}(x_{n+1}))\|-\|P(\widetilde{P}_{\delta}(x_{n+1}))
-\widetilde{P}_{\delta}(x_{n+1})\|\bigg|\geq
\frac{2\cdot\epsilon_n}{3}\cdot t.\end{alignat*}

To establish equation~(\ref{equation:uniform-distance-4}) and
calculate the constant $C_4$, we first express the projection of any
$y\in H$  onto $L[\widetilde{X}_n]$ as
\begin{equation*}\label{equation:projected}\widetilde{P}_{\delta}(y)=\widetilde{x}_0+\sum_{i=1}^n\widetilde{s}_i(y)
\cdot\left(\widetilde{x}_i-\widetilde{x}_0\right),\end{equation*}with
$\widetilde{s}_i(y)\in\RR$, $1\leq i\leq n$. For $0\leq i\leq n$,
the points $\widetilde{x}_i$ have the  decomposition
\begin{equation}\label{equation:hack}\widetilde{x}_i=x_i+\widetilde{z}_i+\widetilde{\varepsilon}_i,\end{equation} where
$\widetilde{z}_i\in\Span\{x_1-x_0,\ldots,x_n-x_0\}$ and
$\widetilde{\varepsilon}_i$ is orthogonal to
$\Span\{x_1-x_0,\ldots,x_n-x_0\}$. Therefore, we have the following
equality for all $y\in H$:
\begin{equation}\label{equation:nasty-1}P(\widetilde{P}_{\delta}(y))-\widetilde{P}_{\delta}(y)
=-\left(\widetilde{\varepsilon}_0+\sum_{i=1}^n\widetilde{s}_i(y)\cdot
\left(\widetilde{\varepsilon}_i-\widetilde{\varepsilon}_0\right)\right).\end{equation}
Furthermore, equations~(\ref{equation:bus-time-2})
and~\eqref{equation:hack} imply the following inequality for all
$0\leq i\leq n$
\begin{equation}\label{equation:sandy-hope}\|\widetilde{\epsilon}_i\|^2\leq
\|\widetilde{z}_i\|^2+\|\widetilde{\varepsilon}_i\|^2\,\leq\,(\delta\cdot
t)^2.\end{equation} Thus, applying
equation~(\ref{equation:sandy-hope}) and the triangle inequality  to
the RHS of equation~(\ref{equation:nasty-1}), we have that
\begin{equation}\label{equation:janet-jackson}\|P(\widetilde{P}_{\delta}(y))
-\widetilde{P}_{\delta}(y)\|\leq\left(1+\sum_{i=1}^n2\cdot|s_i(y)|\right)\cdot\delta\cdot
t.\end{equation}

Now, to bound the RHS of equation~(\ref{equation:janet-jackson}) for
$y\in B(x_0,t)$  (and thereby calculate an upper bound for the
constant $C_4$ of equation~(\ref{equation:uniform-distance-4})), we
calculate a uniform bound for the quantities $\{|s_i(y)|\}_{i=1}^n$.
In fact, we will establish the following inequality for all $y\in
B(x_0,t)$:\begin{equation}\label{equation:cedar}\max_{1\leq i\leq
n}|\widetilde{s}_i(y)|\leq \frac{2\cdot
3^{n-1}}{\omega_n}.\end{equation}The combination of such a bound
with equation~(\ref{equation:janet-jackson}) clearly implies
equation~(\ref{equation:uniform-distance-4}), where
\begin{equation}\label{equation:bus-time-3}C_4=\left(1+n\cdot\frac{4\cdot3^{n-1}}{\omega_n}\right).\end{equation}

We first note that the coefficients $\widetilde{s}_i(y)$ satisfy the
following equation for all $1\leq i\leq n$:
\begin{equation}\label{equation:formula-1}\sin(\theta_i(\widetilde{X}_n))\cdot
|\widetilde{s}_i(y)|\cdot \|\widetilde{x}_i-\widetilde{x}_0\|
=\dist(\widetilde{P}_{\delta}(y),L[\widetilde{X}_n(i)]).\end{equation}Obtaining
an upper bound on the RHS of equation~(\ref{equation:formula-1}) as
well as a lower bound on the quantity
$\sin(\theta_i(\widetilde{X}_n))\cdot
\|\widetilde{x}_i-\widetilde{x}_0\|,$ will then establish
equation~(\ref{equation:cedar}).

We determine an upper bound by noting that $\widetilde{x}_0\in
B(x_0,t)\cap L[\widetilde{X}(i)],$ and thus for any $y\in B(x_0,t)$
\begin{equation}\label{equation:chores}\dist(\widetilde{P}_{\delta}(y),L[\widetilde{X}_n(i)])\leq
\left\|\widetilde{P}_{\delta}(y)-\widetilde{x}_0\right\|\leq\|y-\widetilde{x}_0\|\leq
2\cdot t.\end{equation}

In order to obtain the lower bound,  we apply the product formula
for contents as well as equation~(\ref{equation:diameter-bound-1}),
and get that for any $0<\delta\leq\delta_n$ and all $1\leq i\leq n$
\begin{equation*}\label{equation:little-dream}\MM_n(\widetilde{X}_n)=
\sin(\theta_i(\widetilde{X}_n))\cdot\|\widetilde{x}_i-
\widetilde{x}_0\|\cdot
\MM_{n-1}(\widetilde{X}_n(i))\leq\sin(\theta_i(\widetilde{X}_n))\cdot
\|\widetilde{x}_i-\widetilde{x}_0\|\cdot3^{n-1}\cdot
t^{n-1}.\end{equation*}
Combining this with the induction hypothesis, i.e.,
$\MM_n(\widetilde{X}_n)\geq\omega_n\cdot t^n$, we obtain the
inequality
\begin{equation}\label{equation:calhoun}\min_{1\leq i\leq n}\
\sin(\theta_i(\widetilde{X}_n))\cdot\|\widetilde{x}_i-\widetilde{x}_0\|
\,\geq \,\frac{\omega_n}{3^{n-1}}\cdot t.\end{equation} Applying the
bounds of equations~(\ref{equation:calhoun})
and~(\ref{equation:chores}) to equation~(\ref{equation:formula-1}),
we conclude equation~(\ref{equation:cedar}), and consequently
equations~\eqref{equation:uniform-distance-4}
and~\eqref{equation:bus-time-3}. We note that the constant
$\delta=\delta_{n+1}$ needs to satisfy
equation~(\ref{equation:C-500}) and the requirement
$0<\delta_{n+1}\leq \delta_n$.  We thus set its value in the
following way:
\begin{equation}\label{equation:dog-days}\delta_{n+1}=\min\left\{\frac{\epsilon_n}{3\left(1+n\cdot\frac{4\cdot3^{n-1}
}{\omega_n}\right)}\,,\,\delta_n\right\}.\end{equation}

To prove the final part of the induction argument, i.e.,
equation~(\ref{equation:uniform-distance-3}), we apply the triangle
inequality and equations~\eqref{equation:bus-time-2} (with
$\delta=\delta_{n+1}$),~(\ref{equation:uniform-distance-2})
and~(\ref{equation:dog-days}), obtaining that for any
$\widetilde{x}_{n+1}\in B\left(x_{n+1},\delta_{n+1}\cdot t\right)$
\begin{multline*}\label{equation:last-of-mohawk}\left\|\widetilde{x}_{n+1}-\widetilde{P}_{\delta_{n+1}}
\left(\widetilde{x}_{n+1}\right)\right\|\geq\Bigg|\left\|x_{n+1}
-\widetilde{P}_{\delta_{n+1}}(\widetilde{x}_{n+1})\right\|-\left\|\widetilde{x}_{n+1}-x_{n+1}\right\|\Bigg|\geq\\\left\|x_{n+1}
-\widetilde{P}_{\delta_{n+1}}(x_{n+1})\right\|-\left\|\widetilde{x}_{n+1}-x_{n+1}\right\|\geq\frac{\epsilon_n}{3}\cdot
t.\end{multline*}

\subsubsection{Part~II of Proof}\label{subsubsection:part-deux}
Using the set of $d$-separated balls of Part~I,
$\{B(x_i,\delta_d\cdot t)\}_{i=0}^d$,  we take the element
$X_d=(x_0,\ldots,x_d)$, and for $0<\rho<1$ we define the constant
\begin{equation*}\epsilon_{\rho}=\frac{1-\rho}{2^{\frac{5\cdot d}{2}-1}\cdot
C_{\mu}^2}.\end{equation*}
We note that by
Proposition~\ref{proposition:artube}
\begin{equation}\label{equation:no-lefty}\min_{0\leq i\leq
d}\,\mu\Big(B(x,t)\setminus\Tbe\left(L[X_d(i)], \epsilon_{\rho}\cdot
t\right)\Big)\geq\rho\cdot\mu\left(B(x,t)\right).\end{equation}
Hence, imposing the restriction $\rho>d/(d+1)$, and applying
Lemma~\ref{lemma:inter-measure-1} (presented in
Subsection~\ref{subsection:lema_sum_msr} below) with $\nu$ being the
restricted and scaled measure $\mu|_{B(x,t)}/\mu(B(x,t))$, $\xi =
\rho$, $A_i = B(x,t)\setminus\Tbe (L[X_d(i)], \epsilon_{\rho}\cdot
t)$ for $0\leq i\leq d$, and $k=d$, we get the following lower
bound:
\begin{equation}\label{equation:no-lefty-1}\mu\left(B(x,t)\setminus\bigcup_{i=0}^d\Tbe\left(L[X_d(i)],\epsilon_{\rho}\cdot
t\right)\right)>0.\end{equation}Therefore, for such $\rho$ there
exists a point $x_{d+1}\in B(x,t)\cap\Supp$ so that
\begin{equation}\min_{0\leq i\leq d}\,\DIst\left(x_{d+1},L[X_d(i)]\right)\ >\epsilon_{\rho}\cdot t.\end{equation}

To choose the constants
$\widetilde{\delta}_{\mu}=\widetilde{\delta}_{\mu}(d,C_{\mu})>0$ and
$\widetilde{\omega}_{\mu}=\widetilde{\omega}_{\mu}(d,C_{\mu})>0,$ as
well as verify the claim of $d$-separation, we use practically the
same arguments as those for proving
equations~(\ref{equation:uniform-distance-1})-(\ref{equation:uniform-distance-3}).
We arbitrarily fix $0<\delta\leq\delta_d$, while later specifying
its value, and an element
$$\widetilde{X}_d=(\widetilde{x}_0,\ldots,\widetilde{x}_d)\in\prod_{i=0}^d B(x_i,\delta\cdot t).$$ By
the conclusion of Part~I of the proof,  we have that
$$\MM_d(\widetilde{X}_d)\geq\omega_d\cdot
t^d. $$Furthermore, $\diam(\widetilde{X}_d)\leq 3\cdot t.$ Combining
these with the product formula for contents, we obtain the
inequality
\begin{equation}\label{equation:0-content-use}\min_{0\leq i\leq d}\MM_{d-1}(\widetilde{X}_d(i))\geq\frac{\omega_d}{3}\cdot
t^{d-1}.\end{equation}

For $0\leq i\leq d$, let $P_i$ and $\widetilde{P}_{\delta,i}$ denote
the orthogonal projections of $H$ onto $L[X_d(i)]$ and
$L[\widetilde{X}_d(i)],$ respectively. By virtually the same
argument producing equation~(\ref{equation:uniform-distance-4}),
while applying equation~\eqref{equation:0-content-use}, we have that
for all $y\in B(x,t)$,\begin{equation*}\max_{0\leq i\leq
d}\,\left\|P_i\left(\widetilde{P}_{\delta,i}(y)\right)-\widetilde{P}_{\delta,i}(y)\right\|\,\leq\,\left(1+(d-1)\cdot\frac{4\cdot
3^{d-1}}{\omega_d}\right)\cdot\delta\cdot t\,.\end{equation*}Next,
we impose the further restriction $\rho_0=\frac{d+0.5}{d+1},$ and
for this value of $\rho$ we set
\begin{equation*}\label{equation:don't-cry}\widetilde{\delta}_{\mu}=\min\left\{\frac{\epsilon_{\rho_0}}{3\cdot\left(1+(d-1)\cdot\frac{4\cdot
3^{d-1}}{\omega_d}\right)},\,\delta_d\right\}.\end{equation*}By the
same calculations producing
equation~(\ref{equation:uniform-distance-3}), we get that
\begin{equation}\label{equation:ibm}\min_{0\leq i\leq d}\,\|\widetilde{x}_{d+1}-\widetilde{P}_{\widetilde{\delta}_{\mu},i}(\widetilde{x}_{d+1})\|
\geq\frac{\epsilon_{\rho_0}}{3}\cdot t\ \textup{ for all
}\widetilde{x}_{d+1}\in B(x_{d+1},\widetilde{\delta}_{\mu}\cdot
t).\end{equation}

Finally, combining Part~I and
equations~(\ref{equation:0-content-use}) and~(\ref{equation:ibm})
along with the product formula for contents, we have that for any
$$\widetilde{X}_{d+1}=(\widetilde{x}_0,\ldots,\widetilde{x}_{d+1})\in\prod_{i=0}^{d+1}B(x_i,\delta\cdot t),$$the following
inequality is satisfied\begin{equation*}\min_{0\leq i\leq d+1}\
\MM_d\left(\widetilde{X}_{d+1}(i)\right)\,\geq\,\frac{\omega_d\cdot\epsilon_{\rho_0}}{9}\cdot
t^d.\end{equation*}Therefore, taking
\begin{equation*}\label{equation:mother-of-all-lambdas}\widetilde{\omega}_{\mu}=\frac{\epsilon_{\rho_0}\cdot\omega_d}{9},\end{equation*}
the collection of balls $\{B(x_i,\widetilde{\delta}_{\mu}\cdot
t)\}_{i=0}^{d+1}$ is $d$-separated in $B(x,2\cdot t)$ for
$\widetilde{\omega}_{\mu}$.\qed

\subsubsection{An Elementary Lemma}
\label{subsection:lema_sum_msr} We establish the following
elementary proposition which was used in
Subsection~\ref{subsubsection:part-deux} and will also be used later
in Subsection~\ref{subsection:lemma-1}.
\begin{lemma}\label{lemma:inter-measure-1} If $\nu$ is a
Borel probability measure, $A_0, A_1, \ldots, A_d$ are measurable
sets (w.r.t.~$\nu$), $0<\xi<1$, and
\begin{equation}\label{equation:didng}\min_{0\leq i\leq
d}\nu(A_i)\geq\xi,\end{equation}
then for any $0\leq
k\leq d$ the following inequality holds
\begin{equation}\label{equation:didng-1}\nu\left(\bigcap_{i=0}^k
A_i\right)\geq (k+1)\cdot\xi-k \,.\end{equation}\end{lemma}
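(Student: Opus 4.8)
The plan is to prove Lemma~\ref{lemma:inter-measure-1} by a straightforward induction on $k$, using the elementary inclusion-exclusion bound $\nu(A\cap B)\geq\nu(A)+\nu(B)-1$ repeatedly. The base case $k=0$ is immediate: the claimed bound reads $\nu(A_0)\geq\xi$, which is exactly hypothesis~\eqref{equation:didng}.

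For the inductive step, suppose the bound~\eqref{equation:didng-1} holds for some $0\leq k<d$, i.e., $\nu\left(\bigcap_{i=0}^{k}A_i\right)\geq(k+1)\cdot\xi-k$. Write $C=\bigcap_{i=0}^{k}A_i$ and note that
\[
\nu\left(\bigcap_{i=0}^{k+1}A_i\right)=\nu(C\cap A_{k+1})=\nu(C)+\nu(A_{k+1})-\nu(C\cup A_{k+1})\geq\nu(C)+\nu(A_{k+1})-1,
\]
since $\nu$ is a probability measure and $C\cup A_{k+1}$ is measurable with $\nu(C\cup A_{k+1})\leq1$. Applying the induction hypothesis to $\nu(C)$ and hypothesis~\eqref{equation:didng} to $\nu(A_{k+1})$ gives
\[
\nu\left(\bigcap_{i=0}^{k+1}A_i\right)\geq\bigl((k+1)\cdot\xi-k\bigr)+\xi-1=(k+2)\cdot\xi-(k+1),
\]
which is precisely~\eqref{equation:didng-1} with $k$ replaced by $k+1$. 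This closes the induction and completes the proof.

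I do not anticipate any genuine obstacle here: the only ingredient beyond bookkeeping is the submodular-type inequality $\nu(A\cap B)\geq\nu(A)+\nu(B)-1$, which follows from finite additivity together with $\nu(A\cup B)\leq1$. The one point worth stating explicitly is that all the sets $\bigcap_{i=0}^{j}A_i$ remain measurable (finite intersections of measurable sets), so each application of additivity is legitimate; this is why the hypothesis requires the $A_i$ to be $\nu$-measurable. It is also worth remarking that the bound is only useful (i.e., nontrivial, giving a positive lower bound) when $\xi>k/(k+1)$, which is exactly why in the application in Subsection~\ref{subsubsection:part-deux} the parameter $\rho$ is forced to exceed $d/(d+1)$.
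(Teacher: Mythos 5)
Your proof is correct and follows essentially the same route as the paper: induction on $k$, with the inductive step supplied by the inclusion--exclusion identity $\nu(C\cap A_{k+1})=\nu(C)+\nu(A_{k+1})-\nu(C\cup A_{k+1})$ together with $\nu(C\cup A_{k+1})\leq1$. Your closing remark about the bound being nontrivial only when $\xi>k/(k+1)$ correctly explains the restriction $\rho>d/(d+1)$ used in the application.
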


\begin{proof}The proof is by induction.
Equation~(\ref{equation:didng}) clearly implies the inequality of
equation~(\ref{equation:didng-1}) when $k=0$. Supposing that
equation~\eqref{equation:didng-1} holds for some $0\leq k< d$, we
note that
\begin{equation}\label{equation:app-meas-1} 1 \geq\nu\left(\bigcap_{i=0}^k A_i\cup
A_{k+1}\right)=\nu\left(\bigcap_{i=0}^k A_i\right)
+\nu(A_{k+1})-\nu\left(\bigcap_{i=0}^{k+1}A_i\right).\end{equation}
Thus, by the induction hypothesis and
equation~\eqref{equation:app-meas-1} we have that
\begin{multline*}\nu\left(\bigcap_{i=0}^{k+1}A_i\right)\geq\nu\left(\bigcap_{i=0}^k
A_i\right)+\nu(A_{k+1})-1 \geq
(k+1)\cdot\xi-k+\xi-1 = (k+2) \cdot \xi-(k+1) \,.\end{multline*}
%
\end{proof}

\section{The  Proof of Theorem~\ref{theorem:doozy}}\label{section:doozy-1}
In order to prove Theorem~\ref{theorem:doozy}, we will establish the
existence of constants $\lambda_0=\lambda_0(d,C_{\mu})$ and
$C_1=C_1(d,C_{\mu})$ such that there exists a $d$-plane $L_{(x,t)}$
with
\begin{equation}\label{equation:happy-times}\int_{B(x,t)}\left(\frac{\DIst\left(y,L_{(x,t)}\right)}
{2\cdot t}\right)^2\ud\mu(y)\leq C_1\cdot
c_{\mathrm{MT}}^2(x,t,\lambda_0),\end{equation}for any $x\in\Supp$
and $0<t\leq\diam(\Supp)$.  Applying the definition of the $\beta_2$
numbers to equation~(\ref{equation:happy-times}) then proves
Theorem~\ref{theorem:doozy}.

Our approach for establishing equation~(\ref{equation:happy-times})
generalizes the proof of L{\'e}ger~\cite[Lemma 2]{Leger} for the
case $d=1$. In that case, constructing the line $L_{(x,t)}$ is
relatively straightforward and short. However, for $d\geq2$  there
are combinatorial and geometric issues that do not manifest
themselves when $d=1$, e.g., the proofs of
Proposition~\ref{proposition:sappy-homilies} and
Lemma~\ref{lemma:lemma-1} below, and the notion of $d$-separation
for $d\geq2$ (Section~\ref{section:geom-prop} above). We present the
overall argument in Subsection~\ref{subsection:big-proof}, and we
leave the details   to Subsections~\ref{subsection:lemma-1}
and~\ref{subsection:unwanted-2}. Preliminary notation and
observations are provided in Subsection~\ref{subsection:funky-slam}.
\subsection{Notation and Preliminary
Observations}\label{subsection:funky-slam}For any $x\in\Supp$,
$0<t\leq\diam(\Supp)$, $0<\lambda<2$, and $0\leq i<j\leq d+1$ we
define the following slices of the set $U_{\lambda}(B(x,t))$ of
equation~(\ref{equation:big-U}):
\begin{equation*}U_{\lambda}\left(x,t\,\big|\,X(i)\right)=\{y\in
B(x, t):X(y,i)\in U_{\lambda}(B(x,t))=\},\end{equation*}
\begin{equation*}U_{\lambda}\left(x,t\,\big|\,X(i;j)\right)=\left\{(y,z)\in
B(x, t)^2:X(y,i;z,j)\in U_{\lambda}(B(x,t))=\right\},\end{equation*}
\begin{equation*}U_{\lambda}\left(x,t\,\big|\,X(y,i;j)\right)=\left\{z\in
B(x, t):X(y,i;z,j)\in
U_{\lambda}(B(x,t))=\right\},\end{equation*}\begin{equation*}U_{\lambda}\left(x,t\,\big|\,X(i;y,j)\right)=\left\{z\in
B(x, t):X(z,i;y,j)\in U_{\lambda}(B(x,t))=\right\}.\end{equation*}
In addition, we
fix the following constant of $1$-separation
\begin{equation}\label{equation:lambda-mo}\lambda_0=\frac{\delta_{\mu}}{2}\,,\end{equation}
where $\delta_{\mu}$ is the constant suggested by
Proposition~\ref{proposition:rainy-may-day}.

For the remainder of the proof (i.e., the whole section) we
arbitrarily fix $x\in\Supp$ and $0<t\leq\diam(\Supp)$, and some
$d$-separated collection of balls $\{B(x_i,\delta_{\mu}\cdot
t)\}_{i=0}^{d+1}$ in $B(x,t)$ for the constant $\omega_{\mu}$ (see
Proposition~\ref{proposition:rainy-may-day}). We denote
$B_i=B(x_i,\delta_{\mu}\cdot t)$ for $0\leq i\leq d+1$. Restricting
our attention to only the first $(d+1)$ balls,  we also form an
arbitrary element
$$\widetilde{X}(d+1)=(\widetilde{x}_0,\ldots,\widetilde{x}_d)\in\prod_{i=0}^d \frac{1}{2}\cdot B_i.$$

We note
that\begin{equation}\label{equation:best-inclusion}B(x,t)\setminus\bigcup_{i=0}^d
B_i\  \subseteq\
U_{\lambda_0}(x,t|\widetilde{X}(d+1))\end{equation}and
\begin{equation}\label{equation:no-gut}B_i\nsubseteq
U_{\lambda_0}(x,t|\widetilde{X}(d+1)),\textup{ for each }0\leq i\leq
d.\end{equation}

\subsection{The Essence of the Proof of Theorem~\ref{theorem:doozy}}\label{subsection:big-proof}
For $0<\rho<\infty$ let
\begin{multline}\label{equation:fish-set}\mathcal{E}(\rho)=\Bigg\{\widetilde{X}(d+1)\in\prod_{i=0}^d\frac{1}{2}
\cdot
B_i:\\\int_{U_{\lambda_0}\left(x,t\,\big|\widetilde{X}(d+1)\right)}
\frac{\pds_{\widetilde{x}_0}^2\left(\widetilde{X}(y,d+1)\right)}
{\diam\left(\widetilde{X}(y,d+1)\right)^{d(d+1)}}\,\ud\mu(y)\leq
\rho\cdot
\frac{c_{\mathrm{MT}}^2(x,t,\lambda_0)}{t^{d(d+1)}}\Bigg\}.\end{multline}We
will show that $\mu^{d+1}(\mathcal{E}(\rho))$ is sufficiently large
for some $0<\rho<\infty$.

First, applying Chebychev's inequality to
equation~\eqref{equation:fish-set} we obtain that
\begin{equation}\label{equation:mighty-mince}\mu^{d+1}\left(\prod_{i=0}^d \frac{1}{2}\cdot B_i\setminus
\mathcal{E}(\rho)\right)\leq\frac{t^{d(d+1)}}{\rho}\,.\end{equation}
Next, we note that the $d$-regularity of $\mu$ implies that
\begin{equation}\label{equation:bigger-is-better}
\mu^{d+1}\left(\prod_{i=0}^d\frac{1}{2}\cdot B_i\right) \geq
\frac{1}{C_{\mu}^{d+1}}\cdot\left(\lambda_0\cdot t\right)^{d(d+1)}
.\end{equation}Thus, combining
equations~\eqref{equation:mighty-mince}
and~\eqref{equation:bigger-is-better}, and taking
\begin{equation}\label{equation:fish-rho}\rho_1=\rho_1(d,C_{\mu})>\frac{2}{\lambda_0^{d(d+1)}}\cdot
C_{\mu}^{d+1},\end{equation}we obtain the lower
bound\begin{equation}\label{equation:first-bit-o-measure}\mu^{d+1}\left(\mathcal{E}(\rho_1)\right)>\frac{1}{2}
\cdot\mu^{d+1}\left(\prod_{i=0}^d\frac{1}{2}\cdot
B_i\right)>0.\end{equation}

We will show that the desired $d$-plane, $L_{(x,t)}$, of
equation~(\ref{equation:happy-times}) is obtained by
$L[\widetilde{X}(d+1)]$ for some $\widetilde{X}(d+1)\in
\mathcal{E}(\rho_1)$. In fact, for any such $\widetilde{X}(d+1)$ we
immediately obtain control on a part of the integral on the LHS of
equation~(\ref{equation:happy-times}) as follows. Since
$\widetilde{X}(d+1)\in \mathcal{E}(\rho_1)$ is $d$-separated, by
Lemma~\ref{lemma:hey-diddle-diddle}  the following lower bound holds
for  all $y\in H$
\begin{equation}\label{equation:matador}\frac{\displaystyle
\pds_{\widetilde{x}_0}^2\left(\widetilde{X}(y,d+1)
\right)}{\diam\left(\widetilde{X}(y,d+1)\right)^{d(d+1)}}\geq
\frac{\omega_{\mu}^2}{2^{(d+1)(d+2)}}\cdot\left(\frac{\DIst
(y,L[\widetilde{X}(d+1)])}
{t}\right)^2\cdot\frac{1}{t^{d(d+1)}}\,.\end{equation}
Thus, by
equations~(\ref{equation:fish-set}) and~(\ref{equation:matador}) we
have that  for any $\widetilde{X}(d+1)\in \mathcal{E}(\rho_1)$
\begin{equation}\label{equation:krab}\int_{U_{\lambda_0}\left(x,t\,
\big|\widetilde{X}(d+1)\right)}
\left(\frac{\DIst(y,L[\widetilde{X}(d+1)])}{t}\right)^2\di\mu(y)\
\leq \frac{2^{(d+1)(d+2)}}{\omega_{\mu}^2}\cdot\rho_1\cdot\
c_{\mathrm{MT}}^2(x,t,\lambda_0).\end{equation} Combining this with
the set inclusion of equation~(\ref{equation:best-inclusion})
implies that
\begin{equation}\label{equation:almost-but-no}\int_{B(x,t)\setminus\bigcup_{i=0}^d B_i}
\left(\frac{\DIst(y,L[\widetilde{X}(d+1)])}{t}\right)^2\di\mu(y)\
\leq\frac{2^{(d+1)(d+2)}}{\omega_{\mu}^2}\cdot\rho_1\cdot
c_{\mathrm{MT}}^2(x,t,\lambda_0).\end{equation}

Despite the upper bound of equation~(\ref{equation:almost-but-no}),
the condition of the set $\mathcal{E}(\rho_1)$ does not help us to
obtain a bound for the integrals over the individual balls $B_i$,
$0\leq i\leq d$. This incompleteness follows from
equation~(\ref{equation:no-gut}).  In order to obtain such an upper
bound (thus concluding equation~(\ref{equation:happy-times})), we
must impose further restrictions on the element
$\widetilde{X}(d+1)$.

For $0<\rho<\infty$, let
\begin{multline}\label{equation:math-cal-a}\mathcal{A}(\rho)=\Bigg\{\widetilde{X}(d+1)\in\prod_{i=0}^d\frac{1}{2}\cdot B_i:\\\max_{0\leq i\leq d}
\ \int_{U_{\lambda_0}\left(x,t\,\big|\widetilde{X}(i;d+1)
\right)}\frac{\pds_{\left(\widetilde{X}(y,i;z,d+1)\right)_0}^2\big(\widetilde{X}(y,i;z,d+1)\big)}
{\diam\left(\widetilde{X}(y,i;z,d+1)\right)^{d(d+1)}}\
\ud\mu^2(y,z)\leq\frac{\rho\cdot
c_{\mathrm{MT}}^2(x,t,\lambda_0)}{t^{d^2}}\Bigg\}.\end{multline}Below
in Subsection~\ref{subsection:lemma-1} we prove the following
lemma.\begin{lemma}\label{lemma:lemma-1}There exists a constant
$\rho_2=\rho_2(d,C_{\mu})>0$ such
that$$\mu^{d+1}\left(\mathcal{A}(\rho_2)\right)>\frac{1}{2}\cdot\mu^{d+1}\left(\prod_{i=0}^d\frac{1}{2}\cdot
B_i\right)>0,$$for any $x\in\Supp$ and
$0<t\leq\diam(\Supp)$.\end{lemma}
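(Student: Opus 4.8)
\textbf{Proof proposal for Lemma~\ref{lemma:lemma-1}.}

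The plan is to mirror the argument that produced equation~\eqref{equation:first-bit-o-measure}, but now applied to the two-variable integrand appearing in the definition of $\mathcal{A}(\rho)$ rather than the one-variable integrand of $\mathcal{E}(\rho)$. First I would fix an index $0\leq i\leq d$ and integrate the integrand defining $\mathcal{A}(\rho)$ over $\widetilde{X}(d+1)\in\prod_{j=0}^d\frac12\cdot B_j$ with respect to $\mu^{d+1}$. By Fubini's Theorem, the resulting quantity is (up to constants) an integral over all $(d+3)$-tuples of points---the $d$ ``outer'' vertices $\widetilde{x}_0,\ldots,\widetilde{x}_d$ restricted to the half-balls, and the two inner integration variables $y$ (replacing coordinate $i$) and $z$ (replacing coordinate $d+1$)---of the standard curvature integrand $\pds^2/\diam^{d(d+1)}$, with the integration domain contained in $U_{\lambda_0}(B(x,t))\subseteq B(x,t)^{d+2}$. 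Since relabeling coordinates does not change the Menger-type curvature integrand (it is symmetric in the vertices up to the choice of base point, and the expression for $c_{\mathrm{MT}}^2(\mu|_B)$ recalled in Subsection~\ref{section:polar-sine} shows the integrand is $\pds_{x_0}^2(X)/\diam(X)^{d(d+1)}$ regardless of which vertex is singled out), this integral is bounded above by $\int_{U_{\lambda_0}(B(x,t))}\pds_{x_0}^2(X)/\diam(X)^{d(d+1)}\,\ud\mu^{d+2}(X)=c_{\mathrm{MT}}^2(x,t,\lambda_0)$. Dividing by $t^{d^2}$ and keeping track of the $d$-regularity factors lost in passing from $B_i$ to half-balls, I obtain a uniform bound of the form $C(d,C_\mu)\cdot c_{\mathrm{MT}}^2(x,t,\lambda_0)$ on the average over $\widetilde{X}(d+1)$ of the $i$-th integral in the definition of $\mathcal{A}(\rho)$.

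Next I would apply Chebychev's inequality to each of the $d+1$ integrals (one for each $i$): for a suitable $\rho$, the set of $\widetilde{X}(d+1)$ for which the $i$-th integral exceeds $\rho\cdot c_{\mathrm{MT}}^2(x,t,\lambda_0)/t^{d^2}$ has $\mu^{d+1}$-measure at most $\tfrac{1}{2(d+1)}\cdot\mu^{d+1}\big(\prod_{j=0}^d\frac12\cdot B_j\big)$, provided $\rho$ is chosen larger than $2(d+1)\cdot C(d,C_\mu)$ times the appropriate regularity constant (explicitly, in the spirit of equation~\eqref{equation:fish-rho}, something like $\rho_2>2(d+1)\cdot C_\mu^{d+1}\cdot C(d,C_\mu)/\lambda_0^{d(d+1)}$, absorbing all $d$-regularity losses). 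Taking a union bound over the $d+1$ values of $i$, the set of $\widetilde{X}(d+1)$ failing the maximum condition has measure at most $\tfrac12\cdot\mu^{d+1}\big(\prod_{j=0}^d\frac12\cdot B_j\big)$; hence $\mu^{d+1}(\mathcal{A}(\rho_2))>\tfrac12\cdot\mu^{d+1}\big(\prod_{j=0}^d\frac12\cdot B_j\big)$, which is strictly positive by the $d$-regularity estimate of equation~\eqref{equation:bigger-is-better}. This yields the claimed inequality with $\rho_2=\rho_2(d,C_\mu)$.

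The main obstacle I anticipate is the careful bookkeeping in the Fubini step: one must verify that after integrating the two inner variables $y,z$ and the outer vertices over the half-balls, the combined $(d+2)$-fold integration domain genuinely sits inside $U_{\lambda_0}(B(x,t))$ (so that the bound by $c_{\mathrm{MT}}^2(x,t,\lambda_0)$ is legitimate rather than by the larger integral over all of $B(x,t)^{d+2}$), and that the base-point vertex of the simplex $\widetilde{X}(y,i;z,d+1)$ is handled correctly when $i=0$ versus $i\geq1$---this is why the definition of $\mathcal{A}(\rho)$ writes the p-sine at $(\widetilde{X}(y,i;z,d+1))_0$ rather than at a fixed symbol. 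Checking that $\min(\widetilde{X}(y,i;z,d+1))\geq\lambda_0\cdot t$ on the relevant domain requires the $1$-separation of the balls $\{B_i\}$ (a consequence of their $d$-separation, as noted after the definition of $n$-separated simplices) together with the choice $\lambda_0=\delta_\mu/2$ from equation~\eqref{equation:lambda-mo}; this is essentially the same computation underlying equations~\eqref{equation:best-inclusion} and~\eqref{equation:no-gut}, extended to two deleted-and-replaced coordinates instead of one. Once these inclusions are in hand, the rest is the routine Chebychev-plus-union-bound argument already rehearsed for $\mathcal{E}(\rho_1)$.
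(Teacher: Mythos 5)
Your proposal is correct and follows essentially the same route as the paper: a Fubini argument bounding, for each fixed $0\leq i\leq d$, the integral of the $i$-th condition's integrand over the product of half-balls by $c_{\mathrm{MT}}^2(x,t,\lambda_0)$ (via the relabeling/symmetry identity and the inclusion of the assembled simplices in $U_{\lambda_0}(B(x,t))$, which indeed holds by the very definition of the slices), then Chebychev per index and a combination over the $d+1$ indices — your union bound is exactly the paper's Lemma~\ref{lemma:inter-measure-1} applied to the cylinder sets $\underline{\mathcal{A}_i(\rho)}$, and even your threshold $\rho_2\gtrsim(d+1)\cdot C_\mu^{d}/\lambda_0^{d^2}$ matches the paper's choice $\rho(\xi)$ with $\xi>\frac{d+1/2}{d+1}$.
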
The condition imposed by
$\mathcal{A}(\rho_2)$   yields the following estimate which is
proved in
Subsection~\ref{subsection:unwanted-2}.\begin{proposition}\label{proposition:sappy-homilies}There
exists a constant $C_5=C_5(d,C_{\mu})$ such that for any element
$\widetilde{X}(d+1)\in \mathcal{A}(\rho_2)$:
$$\max_{0\leq i\leq d}\ \int_{B_i}\left(\frac{\dist(y,L[\widetilde{X}(d+1)])}
{t}\right)^2\di\mu(y)\leq C_5\cdot
c_{\mathrm{MT}}^2(x,t,\lambda_0).$$\end{proposition}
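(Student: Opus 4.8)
\textbf{Proof plan for Proposition~\ref{proposition:sappy-homilies}.}

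The plan is to fix $0 \leq i \leq d$ and an element $\widetilde{X}(d+1) \in \mathcal{A}(\rho_2)$, and to bound $\int_{B_i}(\dist(y, L[\widetilde{X}(d+1)])/t)^2 \di\mu(y)$. The key idea is that $\mathcal{A}(\rho_2)$ only gives us control over an integral involving $\widetilde{X}(y,i;z,d+1)$ — that is, a simplex built by \emph{removing} the $i$-th vertex $\widetilde{x}_i$ of $\widetilde{X}(d+1)$ and inserting two new integration variables $y$ (in place of the removed $\widetilde{x}_i$) and $z$ (in place of the absent $x_{d+1}$). To recover $\dist(y, L[\widetilde{X}(d+1)])$ from the p-sine of this modified simplex, I would use the product formula for the p-sine (Proposition~\ref{proposition:product-sine}) together with the elevation-angle identity \eqref{equation:elevation-sine}, in the same spirit as Lemma~\ref{lemma:hey-diddle-diddle}: the p-sine of $\widetilde{X}(y,i;z,d+1)$ at its base vertex factors as (a sine of an elevation angle, which up to scaling is $\dist(y, L[\widetilde{X}(i;z,d+1)])/\|\cdot\|$) times a lower-order p-sine of the face with the $y$-vertex removed. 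So first I would establish a lower bound of the shape
$$
\frac{\pds^2_{(\cdot)_0}(\widetilde{X}(y,i;z,d+1))}{\diam(\widetilde{X}(y,i;z,d+1))^{d(d+1)}} \ \gtrsim\ \left(\frac{\dist(y, L[\widetilde{X}(i;z,d+1)])}{t}\right)^2 \cdot \frac{\MM_{d-1}(\widetilde{X}(i;d+1))^2}{t^{2(d-1)}} \cdot \frac{1}{t^{d(d+1)}},
$$
valid for all $y \in B(x,t)$ and $z$ ranging over the relevant slice, where the $(d-1)$-content factor is bounded below by a constant (depending on $d, C_\mu$) because $\widetilde{X}(d+1)$ is $d$-separated, hence $(d-1)$-separated, and we are taking $\widetilde{x}_i$ from $\frac12 B_i$. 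Here one must also bound $\diam(\widetilde{X}(y,i;z,d+1)) \lesssim t$ from above, which is immediate since everything lives in $B(x,t)$.

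Next, I would handle the passage from $L[\widetilde{X}(i;z,d+1)]$ — a $d$-plane that uses the inserted point $z$ — back to $L[\widetilde{X}(d+1)]$, the $d$-plane we actually care about. The point is that for $y \in B_i$, the simplex $\widetilde{X}(y, i; z, d+1)$ has $y$ playing the role of $\widetilde{x}_i$, so its affine hull $L[\widetilde{X}(y,i;z,d+1)]$ is spanned by $\widetilde{x}_0, \ldots, \widehat{\widetilde{x}_i}, \ldots, \widetilde{x}_d$ together with $y$ and $z$; restricting to $\dist(y, L[\widetilde{X}(i;z,d+1)])$ measures how far $y$ is from the $d$-plane through the other $d$ retained vertices and $z$. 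The plan is to choose a \emph{good} value of $z$ — by the $d$-separation of the balls $\{B_j\}$, for a positive-$\mu$-measure set of $z \in B_{d+1}$ (or one of the remaining balls not among $B_0, \ldots, B_d$, here $B_{d+1}$), the point $z$ together with $\widetilde{X}(i;d+1)$ spans a $d$-plane that is uniformly transverse to $\spn V[\widetilde{X}(d+1)]$, so that $\dist(y, L[\widetilde{X}(d+1)]) \lesssim \dist(y, L[\widetilde{X}(i;z,d+1)]) + (\text{comparison error})$, with the comparison error controlled uniformly by the separation constants; then integrate the defining inequality of $\mathcal{A}(\rho_2)$ in $z$ over that good set, using $d$-regularity to produce the factor $\mu(\text{good set}) \gtrsim t^d$ on the left. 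Concretely: integrate both sides of the bound over $y \in B_i$ and over $z$ in the good set, invoke the $\mathcal{A}(\rho_2)$ estimate (whose RHS carries $t^{-d^2}$, which combines with the extra $t^{-2(d-1)}$ and the $\mu$-mass $t^d$ to match dimensions and yield $c_{\mathrm{MT}}^2(x,t,\lambda_0)$), and divide out.

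The main obstacle — and the step I would spend the most care on — is the geometric comparison between $\dist(y, L[\widetilde{X}(d+1)])$ and $\dist(y, L[\widetilde{X}(i;z,d+1)])$ uniformly in $y, z$: I need that replacing the vertex $\widetilde{x}_i$ by the "wrong" point $z$ (and dropping $\widetilde{x}_i$) changes the ambient $d$-plane in a controlled way, so that the distance from $y$ to the true plane is not much larger than the distance from $y$ to the perturbed plane, with constants depending only on $d$ and $C_\mu$. This requires a quantitative "angle between subspaces" estimate, where the key input is $d$-separation: the $(d-1)$-content $\MM_{d-1}(\widetilde{X}(i;d+1))$ is bounded below, and $z$ is chosen so that $\dist(z, L[\widetilde{X}(i;d+1)])$ is bounded below (which the $d$-separation of $\{B_j\}_{j=0}^{d+1}$ supplies), so the $d$-plane $L[\widetilde{X}(i;z,d+1)]$ makes a bounded angle with every direction in $V[\widetilde{X}(d+1)]$, in particular with $\widetilde{x}_i - \widetilde{x}_0$. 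Tracking the constants through a projection/triangle-inequality argument, analogous to how equations \eqref{equation:uniform-distance-4}–\eqref{equation:cedar} were derived in the proof of Proposition~\ref{proposition:rainy-may-day}, will give the claimed bound. Once that comparison is in hand, the rest is a routine application of Fubini, $d$-regularity, and the defining inequality of $\mathcal{A}(\rho_2)$, with $C_5$ an explicit function of $d$, $C_\mu$, $\omega_\mu$, $\delta_\mu$, and $\rho_2$.
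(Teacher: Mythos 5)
Your overall skeleton (use $d$-separation and Lemma~\ref{lemma:hey-diddle-diddle} to turn the $\mathcal{A}(\rho_2)$-integral into control of $\int_{B_i}\dist(y,L[\widetilde{X}(i;z,d+1)])^2\,\di\mu$, then compare the perturbed $d$-plane with $L[\widetilde{X}(d+1)]$) matches the paper, but there is a genuine gap at the plane-comparison step. You assert that the comparison error between $L[\widetilde{X}(i;z,d+1)]$ and $L[\widetilde{X}(d+1)]$ is ``controlled uniformly by the separation constants.'' It is not, and cannot be: the two $d$-planes hinge along the $(d-1)$-plane $L[\widetilde{X}(i;d+1)]$, and the dihedral angle $\alpha$ between them satisfies $\sin\alpha=\DIst(z,L[\widetilde{X}(d+1)])\,/\,\DIst(z,L[\widetilde{X}(i;d+1)])$ (cf.~equation~\eqref{equation:eddie-money}). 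The $d$-separation of the balls $\{B_j\}_{j=0}^{d+1}$ only bounds the \emph{denominator} from below; it says nothing about the numerator. For a generic $z\in\frac{1}{2}\cdot B_{d+1}$ one can have $\DIst(z,L[\widetilde{X}(d+1)])\approx t$, hence $\sin\alpha\approx 1$, and then for $y\in B_i$ (which sits at distance $\approx t$ from the hinge) the discrepancy $\DIst(P_i(y),L[\widetilde{X}(d+1)])$ is of order $t$; after integrating over $B_i$ this contributes an additive term of order $\mu(B_i)\approx t^d$ that is not dominated by $c_{\mathrm{MT}}^2(x,t,\lambda_0)$, so the claimed inequality does not follow. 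Note also that ``uniform transversality'' is the wrong requirement: you need the perturbed plane to be nearly \emph{parallel} to the true one, and averaging over $z$ instead of selecting a good $z$ does not repair this, since neither $\mathcal{A}(\rho_2)$ nor separation controls the average of $\DIst(z,L[\widetilde{X}(d+1)])^2$ over $\frac{1}{2}\cdot B_{d+1}$.

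The missing idea is that the smallness of $\DIst(z,L[\widetilde{X}(d+1)])$ must itself come from the curvature. The paper (Lemma~\ref{lemma:there-is-no-other}) selects a single point $\widetilde{x}_{d+1}\in\frac{1}{2}\cdot B_{d+1}\cap\Supp$ by a double Chebychev argument, lying simultaneously in a set $\mathcal{Q}(\tau_0)$ on which the sliced $\mathcal{A}(\rho_2)$-integral is small (this yields your first step, equation~\eqref{equation:mr-ed}) and in a set $\mathcal{G}(\tau_0)$ on which $\left(\DIst(\widetilde{x}_{d+1},L[\widetilde{X}(d+1)])/t\right)^2\lessapprox c_{\mathrm{MT}}^2(x,t,\lambda_0)/t^d$. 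The latter bound is extracted from equation~\eqref{equation:krab}, i.e., from membership of $\widetilde{X}(d+1)$ in $\mathcal{E}(\rho_1)$ --- an input your proposal never invokes (you work only with $\mathcal{A}(\rho_2)$). With this selection, $\sin\alpha\lessapprox\DIst(\widetilde{x}_{d+1},L[\widetilde{X}(d+1)])/t$ (equation~\eqref{equation:sine-thing-dihedral}), the plane-comparison term integrates to $\lessapprox c_{\mathrm{MT}}^2(x,t,\lambda_0)$ (equations~\eqref{equation:frankenwine}--\eqref{equation:sandy-duncan}), and the argument closes. A smaller omission: when $i=0$ the integration variable $y$ becomes the anchor vertex of the p-sine, so one must first re-anchor via the law of sines~\eqref{equation:law-of-sines} (equation~\eqref{equation:swap-sines}) before applying Lemma~\ref{lemma:hey-diddle-diddle}; this is routine compared with the main gap, but your sketch does not address it.
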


Finally,  equation~\eqref{equation:first-bit-o-measure} and
Lemma~\ref{lemma:lemma-1} imply that $\mathcal{E}(\rho_1)\cap
\mathcal{A}(\rho_2)\not=\emptyset$. Thus, fixing an arbitrary
$\widetilde{X}(d+1)\in \mathcal{E}(\rho_1)\cap \mathcal{A}(\rho_2)$,
equation~(\ref{equation:happy-times}) is deduced from
equation~(\ref{equation:almost-but-no}) and
Proposition~\ref{proposition:sappy-homilies}.\qed

\subsection{The Proof of
Lemma~\ref{lemma:lemma-1}}\label{subsection:lemma-1} For each $0\leq
i\leq d$ we define the following cartesian product
\begin{equation*}\label{equation:almost-big}A_i=\prod_{0\leq j\not=i\leq d}\frac{1}{2}\cdot
B_j.\end{equation*}
We note that the $d$-regularity of $\mu$ and
equation~\eqref{equation:lambda-mo} trivially imply the following
estimate for each $0\leq i\leq d$:
\begin{equation}\label{equation:estimates-die}\mu^d(A_i) \geq \frac{1}{C^d_{\mu}}\cdot(\lambda_0\cdot
t)^{d^2}.
\end{equation}
Then, for $0<\rho<\infty$ and $0\leq i\leq
d$, we define the
set\begin{multline}\label{equation:symmetry-sets}\mathcal{A}_i(\rho)=\bigg\{\widetilde{X}(i;d+1)\in
A_i:\\\int_{U_{\lambda_0}\left(x,t\,\big|\widetilde{X}(i;d+1)
\right)}\frac{\pds_{\left(\widetilde{X}(y,i;z,d+1)\right)_0}^2\big(\widetilde{X}(y,i;z,d+1)\big)}{\diam\left(\widetilde{X}(y,i;z,d+1)\right)^{d(d+1)}}\
\ud\mu^2(y,z)\leq\frac{\rho\cdot
c_{\mathrm{MT}}^2(x,t,\lambda_0)}{t^{d^2}}\bigg\},\end{multline}and
we embed it in the product $\prod_{j=0}^d\frac{1}{2}\cdot B_j$ by
defining the set
\begin{equation*}\label{equation:augmentale}\underline{\mathcal{A}_i\left(\rho\right)}=\Big\{\widetilde{X}(y,i;d+1):\widetilde{X}(i,d+1)\in
\mathcal{A}_i\left(\rho\right)\textup{ and }y\in \frac{1}{2}\cdot
B_i\Big\}.\end{equation*}From this definition we see that
\begin{equation}\label{equation:killing-moi}\mu^{d+1}\left(\underline{\mathcal{A}_i(\rho)}\right)
=\mu^d(\mathcal{A}_i(\rho))\cdot\mu\left(\frac{1}{2}\cdot
B_i\right).\end{equation}Furthermore, for the set
$\mathcal{A}(\rho)$ of equation~(\ref{equation:math-cal-a}), we note
the inclusion
\begin{equation}\label{equation:inter-all-sym}\bigcap_{i=0}^d\underline{\mathcal{A}_i(\rho)}
\subseteq \mathcal{A}(\rho)\subseteq \prod_{i=0}^d\frac{1}{2}\cdot
B_i.\end{equation}

We next find $\rho$ such that $\mu^{d+1}(\mathcal{A}(\rho))$ is
sufficiently large. We do this by first using
equation~\eqref{equation:killing-moi} to find $\rho$ such that the
individual $\mu^{d+1}(\underline{\mathcal{A}_i(\rho)})$, $0\leq
i\leq d$, are sufficiently large, and then applying
equation~\eqref{equation:inter-all-sym} to get the desired
conclusion about $\mathcal{A}(\rho)$.

Applying Chebychev's inequality to
equation~\eqref{equation:symmetry-sets} implies that for all $0 \leq
i \leq d$
\begin{equation}\label{equation:deadly-force}\mu^d\left(\mathcal{A}_i(\rho)\right)\geq\mu^d
\left(A_i\right)-\frac{t^{d^2}}{\rho}.\end{equation}In order to
choose $\rho$, for any $0<\xi<1$ we define
\begin{equation*}\label{equation:rhino-2}\rho(\xi)=\frac{C_{\mu}^d}{1-\xi}\cdot\left(\frac{1}
{\lambda_0}\right)^{d^2},\end{equation*}and by applying the
estimates of equations~(\ref{equation:estimates-die})
and~(\ref{equation:deadly-force}) we obtain that
\begin{equation}\label{equation:for-all-i-please}\mu^d\left(\mathcal{A}_i(\rho(\xi))\right)
\geq\xi\cdot\mu^d\left(A_i\right),\textup{ for each }0\leq i\leq
d.\end{equation}Hence, by equations~(\ref{equation:killing-moi})
and~(\ref{equation:for-all-i-please}) we have the lower bound
\begin{equation}\mu^{d+1}\left(\underline{\mathcal{A}_i(\rho(\xi))}\right)
\geq\xi\cdot\mu^{d+1}\left(\prod_{j=0}^d \frac{1}{2}\cdot
B_j\right),\textup{ for all }0\leq i\leq d.\end{equation}

Therefore, letting $\rho_2=\rho_2(\xi)$ where
\begin{equation}\label{equation:xi-is-where}\xi>\frac{d+1/2}{d+1}\
,\end{equation}
and applying Lemma~\ref{lemma:inter-measure-1} (with $\nu$ being the
measure $\mu^{d+1}$ restricted to the set $\prod_{j=0}^d 1/2 \cdot
B_j$ and scaled to 1 on that set, $A_i =
\underline{\mathcal{A}_i(\rho(\xi))}$ for $0\leq i\leq d$, and
$k=d$) we get the following lower bound:
\begin{equation}\label{equation:haploid}\mu^{d+1}\left(\bigcap_{i=0}^d\
\underline{\mathcal{A}_i(\rho_2(\xi))}\right)\ \geq\
\big((d+1)\cdot\xi-d\big)\cdot\mu^{d+1}\left(\prod_{j=0}^d
\frac{1}{2}\cdot B_j\right)\ >
\frac{1}{2}\cdot\mu^{d+1}\left(\prod_{j=0}^d \frac{1}{2}\cdot
B_j\right).\end{equation}Lemma~\ref{lemma:lemma-1} thus follows from
equations~\eqref{equation:inter-all-sym}
and~\eqref{equation:haploid}.\qed

\subsection{The Proof of
Proposition~\ref{proposition:sappy-homilies}}\label{subsection:unwanted-2}
Up until this point, we have not used the full statement of
Proposition~\ref{proposition:rainy-may-day}. We have only used the
first $(d+1)$ balls, $B_0,\ldots,B_d$,  in the definitions of the
sets $\mathcal{E}(\rho_1)$ and $\mathcal{A}(\rho_2)$, and we have
completely ignored the $(d+2)$-nd ball of the $d$-separated
collection $\{B_j\}_{j=0}^{d+1}.$  The proof of
Proposition~\ref{proposition:sappy-homilies} requires the use of
this final ball, which we have denoted by $B_{d+1}$.  We use this
ball to formulate the following lemma (whose proof is given in
Subsection~\ref{subsection:proof_lema_no_other}).
\begin{lemma}\label{lemma:there-is-no-other}There exist constants $C_6=C_6(d,C_{\mu},\lambda_0)$ and $C_7=C_7(d,C_{\mu},\lambda_0)$
such that for any fixed
$\widetilde{X}(d+1)\in\mathcal{E}(\rho_1)\cap\mathcal{A}(\rho_2)$
and fixed $0\leq i\leq d$, the following property is satisfied:
There exists a point$$\widetilde{x}_{d+1}\in \frac{1}{2}\cdot
B_{d+1}\cap\Supp$$with
\begin{equation}\label{equation:hot-tin-roof}\int_{U_{\lambda_0}\left(x,t\,\big|\widetilde{X}(i;\,\widetilde{x}_{d+1},d+1)\right)}
\frac{\pds_{\left(\widetilde{X}\left(y,i;\widetilde{x}_{d+1},d+1\right)\right)_0}^2\left(\widetilde{X}\left(y,i;\widetilde{x}_{d+1},d+1\right)\right)}
{\diam\left(\widetilde{X}\left(y,i;\widetilde{x}_{d+1},d+1\right)\right)^{d(d+1)}}\,\ud\mu(y)\leq
C_6\cdot\frac{c_{\mathrm{MT}}^2(x,t,\lambda_0)}{t^{d^2+d}},\end{equation}
and\begin{equation}\label{equation:dog-day-after}\left(\frac{\DIst(\widetilde{x}_{d+1},L[\widetilde{X}(d+1)
])}{t}\right)^2\leq C_7\cdot
\frac{c_{\mathrm{MT}}^2(x,t,\lambda_0)}{t^d}.\end{equation}
\end{lemma}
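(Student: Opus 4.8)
The plan is to obtain $\widetilde x_{d+1}$ by two successive applications of Chebychev's inequality on the ball $\tfrac12 B_{d+1}=B(x_{d+1},\lambda_0 t)$ (recall $\lambda_0=\delta_\mu/2$): the first application produces the ``pointwise'' integral estimate \eqref{equation:hot-tin-roof} out of the ``averaged'' estimate encoded in $\widetilde X(d+1)\in\mathcal A(\rho_2)$, and the second produces the distance estimate \eqref{equation:dog-day-after} out of $\widetilde X(d+1)\in\mathcal E(\rho_1)$. Since each of the two resulting ``good'' subsets of $\tfrac12 B_{d+1}$ will carry more than half of $\mu\bigl(\tfrac12 B_{d+1}\bigr)$, they intersect, and any point of the intersection (which may be taken in $\Supp$) is the desired $\widetilde x_{d+1}$. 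Throughout I use $\mu\bigl(\tfrac12 B_{d+1}\bigr)\ge C_\mu^{-1}(\lambda_0 t)^d$, which holds since $x_{d+1}\in\Supp$ and $\lambda_0<1$. We may assume $c_{\mathrm{MT}}^2(x,t,\lambda_0)>0$; the case $c_{\mathrm{MT}}^2(x,t,\lambda_0)=0$ is immediate, since then $\widetilde X(d+1)\in\mathcal E(\rho_1)\cap\mathcal A(\rho_2)$ together with \eqref{equation:matador} forces the integrands appearing in \eqref{equation:hot-tin-roof} and \eqref{equation:dog-day-after} to vanish for $\mu$-a.e.\ choice of the free vertex in $\tfrac12 B_{d+1}$.

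For \eqref{equation:hot-tin-roof}, fix $0\le i\le d$ and, for $z\in B(x,t)$, let $F_i(z)$ denote the left-hand side of \eqref{equation:hot-tin-roof} with the vertex $\widetilde x_{d+1}$ replaced by $z$; so the goal is a point $z=\widetilde x_{d+1}\in\tfrac12 B_{d+1}\cap\Supp$ with $F_i(z)\le C_6\,c_{\mathrm{MT}}^2(x,t,\lambda_0)\,t^{-(d^2+d)}$. By Fubini's theorem (the integrand is nonnegative), $\int_{B(x,t)}F_i(z)\,\ud\mu(z)$ is exactly the $i$-th term inside the maximum defining $\mathcal A(\rho_2)$ in \eqref{equation:math-cal-a}, hence at most $\rho_2\,c_{\mathrm{MT}}^2(x,t,\lambda_0)\,t^{-d^2}$ because $\widetilde X(d+1)\in\mathcal A(\rho_2)$; a fortiori $\int_{\frac12 B_{d+1}}F_i\,\ud\mu\le\rho_2\,c_{\mathrm{MT}}^2(x,t,\lambda_0)\,t^{-d^2}$ since $\tfrac12 B_{d+1}\subseteq B(x,t)$. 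Chebychev's inequality on $\tfrac12 B_{d+1}$, combined with the lower bound on $\mu\bigl(\tfrac12 B_{d+1}\bigr)$, then shows that for $C_6=C_6(d,C_\mu,\lambda_0)$ large enough (e.g.\ $C_6>2C_\mu\rho_2\lambda_0^{-d}$) the set $G_1:=\{z\in\tfrac12 B_{d+1}:F_i(z)\le C_6\,c_{\mathrm{MT}}^2(x,t,\lambda_0)\,t^{-(d^2+d)}\}$ satisfies $\mu(G_1)>\tfrac12\mu\bigl(\tfrac12 B_{d+1}\bigr)$.

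For \eqref{equation:dog-day-after}, the first step is the geometric observation that $\tfrac12 B_{d+1}\subseteq U_{\lambda_0}\bigl(x,t\,|\,\widetilde X(d+1)\bigr)$. Indeed, $\tfrac12 B_{d+1}=B(x_{d+1},\lambda_0 t)$ lies in the interior of $B_{d+1}$, so a point $p\in\tfrac12 B_{d+1}\cap B_j$ with $0\le j\le d$ would, together with points of $B_{d+1}$ arbitrarily close to $p$, give pairs of points drawn from the two distinct balls $B_j,B_{d+1}$ at arbitrarily small distance — contradicting the $d$-separation (hence $1$-separation) of $\{B_i\}_{i=0}^{d+1}$ of Proposition~\ref{proposition:rainy-may-day}; thus $\tfrac12 B_{d+1}\cap\bigcup_{j=0}^d B_j=\emptyset$ and \eqref{equation:best-inclusion} gives the inclusion. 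Consequently $\widetilde X(d+1)\in\mathcal E(\rho_1)$ (see \eqref{equation:fish-set}) yields $\int_{\frac12 B_{d+1}}\pds^2_{\widetilde x_0}\bigl(\widetilde X(z,d+1)\bigr)\,\diam\bigl(\widetilde X(z,d+1)\bigr)^{-d(d+1)}\,\ud\mu(z)\le\rho_1\,c_{\mathrm{MT}}^2(x,t,\lambda_0)\,t^{-d(d+1)}$, and since $\widetilde X(d+1)$ is $d$-separated the pointwise lower bound \eqref{equation:matador} (via Lemma~\ref{lemma:hey-diddle-diddle}) converts this into $\int_{\frac12 B_{d+1}}\bigl(\dist(z,L[\widetilde X(d+1)])/t\bigr)^2\,\ud\mu(z)\le 2^{(d+1)(d+2)}\,\omega_\mu^{-2}\,\rho_1\,c_{\mathrm{MT}}^2(x,t,\lambda_0)$. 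A second application of Chebychev's inequality on $\tfrac12 B_{d+1}$ produces $C_7=C_7(d,C_\mu,\lambda_0)$ with $\mu(G_2)>\tfrac12\mu\bigl(\tfrac12 B_{d+1}\bigr)$, where $G_2:=\{z\in\tfrac12 B_{d+1}:(\dist(z,L[\widetilde X(d+1)])/t)^2\le C_7\,c_{\mathrm{MT}}^2(x,t,\lambda_0)\,t^{-d}\}$.

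Finally, $\mu(G_1)+\mu(G_2)>\mu\bigl(\tfrac12 B_{d+1}\bigr)$ forces $\mu(G_1\cap G_2)>0$; choosing $\widetilde x_{d+1}\in G_1\cap G_2\cap\Supp$ — possible because $\mu$-almost every point lies in $\Supp$ — gives a point satisfying both \eqref{equation:hot-tin-roof} and \eqref{equation:dog-day-after}, and $C_6,C_7$ depend only on $d$, $C_\mu$, $\lambda_0$ (through $\rho_1$, $\rho_2$, $\omega_\mu$), uniformly in $\widetilde X(d+1)$ and in $i$, as claimed. I expect the bulk of the actual work to be in three places: the Fubini reorganization identifying $\int F_i\,\ud\mu$ with a term of the definition of $\mathcal A(\rho_2)$ (keeping the slicing of $U_{\lambda_0}$ straight), the inclusion $\tfrac12 B_{d+1}\subseteq U_{\lambda_0}(x,t\,|\,\widetilde X(d+1))$, and the careful matching of powers of $t$ so that the two Chebychev thresholds $C_6\,c_{\mathrm{MT}}^2 t^{-(d^2+d)}$ and $C_7\,c_{\mathrm{MT}}^2 t^{-d}$ are of the correct order relative to $\mu\bigl(\tfrac12 B_{d+1}\bigr)\approx(\lambda_0 t)^d$; the remaining computations are routine.
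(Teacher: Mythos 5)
Your proposal is correct and follows essentially the same route as the paper's proof: a Fubini/Chebychev argument on $\tfrac12 B_{d+1}$ using membership in $\mathcal{A}(\rho_2)$ for the integral bound \eqref{equation:hot-tin-roof}, a second Chebychev argument using membership in $\mathcal{E}(\rho_1)$ together with equations \eqref{equation:matador}/\eqref{equation:krab} for the distance bound \eqref{equation:dog-day-after}, and an intersection of the two good sets of measure greater than $\tfrac12\mu\bigl(\tfrac12 B_{d+1}\bigr)$. Your explicit verification of the inclusion $\tfrac12 B_{d+1}\subseteq U_{\lambda_0}\bigl(x,t\,\big|\,\widetilde{X}(d+1)\bigr)$ and your treatment of the degenerate case $c_{\mathrm{MT}}^2(x,t,\lambda_0)=0$ are points the paper leaves implicit, and both are handled correctly.
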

We will prove this lemma in
Subsection~\ref{subsection:proof_lema_no_other}, and will then use
it in Subsection~\ref{subsubsection:lookyling} to prove
Proposition~\ref{proposition:sappy-homilies}.

\subsubsection{Proof of
Lemma~\ref{lemma:there-is-no-other}}\label{subsection:proof_lema_no_other}
To construct the point $\widetilde{x}_{d+1}$, for any fixed
$\widetilde{X}(d+1)\in\mathcal{A}(\rho_2)$ and any $0\leq i\leq d$
we define the following two sets for $\rho_2$ of
Lemma~\ref{lemma:lemma-1} and any $0<\tau<\infty$:
\begin{multline*}\mathcal{Q}(\tau)=\Bigg\{z\in \frac{1}{2}\cdot
B_{d+1}:\int_{U_{\lambda_0}\left(x,t\,\big|\widetilde{X}(i;\,z,d+1)\right)}
\frac{\pds_{\left(\widetilde{X}\left(y,i;z,d+1\right)\right)_0}^2
\left(\widetilde{X}\left(y,i;z,d+1\right)\right)}
{\diam\left(\widetilde{X}\left(y,i;z,d+1\right)\right)^{d(d+1)}}\,\ud\mu(y)\leq\\
\frac{\tau}{t^d}\cdot\rho_2\cdot\frac{c_{\mathrm{MT}}^2(x,t,\lambda_0)}
{t^{d^2}}\Bigg\},\end{multline*}and\begin{equation*}\mathcal{G}(\tau)=\left\{z\in
\frac{1}{2}\cdot B_{d+1}:\left(\frac{\DIst
(z,L[\widetilde{X}(d+1)])}{t}\right)^2\leq \frac{\tau}{t^d}\cdot
\rho_1\cdot\frac{2^{(d+1)(d+2)}}{\omega_{\mu}^2}\cdot
c_{\mathrm{MT}}^2(x,t,\lambda_0)\right\}.\end{equation*} The idea is
to find a large enough $\tau$ so that the intersection of these two
sets is non-empty.

We first focus on the set $\mathcal{Q}(\tau)$ and specify a value
for $\tau$ such that $\mu(\mathcal{Q}(\tau))$ is sufficiently large.
Since $\widetilde{X}(d+1)\in\mathcal{A}(\rho_2)$, by
equation~\eqref{equation:math-cal-a} 
we get that
\begin{multline*}\int_{\frac{1}{2}\cdot
B_{d+1}}\left(\int_{U_{\lambda_0}\left(x,t\,\big|\widetilde{X}(i;\,z,d+1)\right)}
\frac{\pds_{\left(\widetilde{X}\left(y,i;z,d+1\right)\right)_0}^2
\left(\widetilde{X}\left(y,i;z,d+1\right)\right)}
{\diam\left(\widetilde{X}\left(y,i;z,d+1\right)\right)^{d(d+1)}}\,\ud\mu(y)\right)
\di\mu(z)\leq\\\int_{U_{\lambda_0}\left(x,t\,\big|\widetilde{X}(i;d+1)\right)}
\frac{\pds_{\left(\widetilde{X}\left(y,i;z,d+1\right)\right)_0}^2
\left(\widetilde{X}\left(y,i;z,d+1\right)\right)}
{\diam\left(\widetilde{X}\left(y,i;z,d+1\right)\right)^{d(d+1)}}\,\ud\mu(y)\ud\mu(z)\leq\frac{\rho_2}{t^{d^2}}\cdot
c_{\mathrm{MT}}^2(x,t,\lambda_0).\end{multline*} Hence, by
Chebychev's inequality we obtain
$$\mu\left(\mathcal{Q}(\tau)\right)\geq\mu\left(\frac{1}{2}\cdot B_{d+1}\right)-\frac{t^d}{\tau}.$$
We thus fix
\begin{equation*}\label{equation:tauer}\tau_0=\tau_0(d,C_{\mu})>
\frac{2\cdot C_{\mu}}{\lambda_0^d},\end{equation*} and by the
$d$-regularity of $\mu$ we have the lower
bound\begin{equation}\label{equation:no-more-Chebychev}\mu(\mathcal{Q}(\tau_0))>
\frac{1}{2}\cdot\mu\left(\frac{1}{2}\cdot
B_{d+1}\right).\end{equation}Clearly,  one can  choose any
$\widetilde{x}_{d+1}$  in  $\mathcal{Q}(\tau_0)\not=\emptyset$ and
it will satisfy equation~(\ref{equation:hot-tin-roof}) with
$C_6=\tau_0\cdot\rho_2.$

Next, to choose $\tau$ such that $\mu(\mathcal{G}(\tau))$ is also
sufficiently large, i.e., to  find a point $\widetilde{x}_{d+1}$
which satisfies equation~(\ref{equation:dog-day-after}) as well, we
apply equation~(\ref{equation:krab}) and Chebychev's inequality to
obtain
$$\mu\left(\mathcal{G}(\tau)\right)\geq\mu\left(\frac{1}{2}\cdot B_{d+1}\right)-\frac{t^d}{\tau}.$$Hence,
for $\tau=\tau_0$, by the $d$-regularity of $\mu$ we get that
\begin{equation}\label{equation:lowly-line}\mu\left(\mathcal{G}(\tau_0)\right)>\frac{1}{2}
\cdot\mu\left(\frac{1}{2}\cdot B_{d+1}\right).\end{equation}

Finally, the combination of
equations~(\ref{equation:no-more-Chebychev})
and~(\ref{equation:lowly-line}) results in the
inequality$$\mu\left(\mathcal{Q}(\tau_0)\cap\mathcal{G}(\tau_0)\right)>0,$$
and therefore the lemma is established with $C_6$ (as specified
above)
and$$C_7=\tau_0\cdot\rho_1\cdot\frac{2^{(d+1)(d+2)}}{\omega_{\mu}}.$$\qed

\subsubsection{Deriving Proposition~\ref{proposition:sappy-homilies} from
Lemma~\ref{lemma:there-is-no-other}} \label{subsubsection:lookyling}
We arbitrarily fix an index $0\leq i\leq d$ and prove
Proposition~\ref{proposition:sappy-homilies} by specifying a
constant $C_5 =C_5(d,C_{\mu},\lambda_0)$ such that
\begin{equation}\label{equation:barry-manilow}\int_{B_i}\left(\frac{\DIst(y,L[\widetilde{X}(d+1)])}{t}\right)^2\,\ud\mu(y)
\leq C_5\cdot c_{\mathrm{MT}}^2(x,t,\lambda_0),\textup{ for all
}0\leq i\leq d\,.\end{equation}
Our strategy for proving equation~\eqref{equation:barry-manilow} is
to first show that for any point $\widetilde{x}_{d+1}$ satisfying
Lemma~\ref{lemma:there-is-no-other}, the following inequality holds
(for the fixed index $i$ and $C_6$ as in
Lemma~\ref{lemma:there-is-no-other}):
\begin{equation}\label{equation:mr-ed}\int_{B_i}\left(\frac{\DIst(y,L[\widetilde{X}(i\,;\widetilde{x}_{d+1},d+1)
])}{t}\right)^2\,\ud\mu(y)
\leq
\frac{2^{(d+1)(d+2)}}{\omega_{\mu}^2} \cdot \left(
\frac{2}{\lambda_0} \right)^{d(d+1)}\cdot C_6\cdot
c_{\mathrm{MT}}^2(x,t,\lambda_0).\end{equation}
Then, a
basic geometric argument shows that equation~\eqref{equation:mr-ed}
implies equation~\eqref{equation:barry-manilow}.

Now, if $\widetilde{x}_{d+1}$ is a point satisfying
Lemma~\ref{lemma:there-is-no-other}, then since
$\widetilde{x}_{d+1}\in B_{d+1}$ and the collection of balls
$\{B_i\}_{i=0}^{d+1}$ is $d$-separated we have that
\begin{equation}\label{equation:war-lord} B_i\subseteq U_{\lambda_0}(x,t\,\big|\widetilde{X}
(i;\widetilde{x}_{d+1},d+1))\end{equation} and
\begin{equation}\label{equation:holy-wawder}\MM_d(\widetilde{X}(i;\widetilde{x}_{d+1},d+1))\geq\omega_{\mu}\cdot
t^d.\end{equation} If $1\leq i\leq d,$ then
Lemma~\ref{lemma:hey-diddle-diddle} implies that for any $y\in B_i$
\begin{equation}\label{equation:diaper-dandy}\frac{\pds_{\widetilde{x}_0}^2
\left(\widetilde{X}(y,i\,;\widetilde{x}_{d+1},d+1)\right)}
{\diam\left(\widetilde{X}(y,i\,;\widetilde{x}_{d+1},d+1)\right)^{d(d+1)}}\geq
\frac{\omega_{\mu}^2}{2^{(d+1)(d+2)}}\cdot\left(\frac{\DIst(y,L[\widetilde{X}(i\,;\widetilde{x}_{d+1},d+1)
])}{t}\right)^2\cdot\frac{1}{t^{d(d+1)}}.\end{equation}
Combining this inequality with
equations~\eqref{equation:hot-tin-roof}
and~\eqref{equation:war-lord}, we conclude
equation~\eqref{equation:mr-ed} in this case.

However, if $i=0$, then we cannot directly apply
Lemma~\ref{lemma:hey-diddle-diddle}.  Instead, we first note that
for all $y\in B_0$
\begin{equation}\label{equation:star}\frac{\delta_{\mu}}{2} \cdot
t=\lambda_0\cdot
t\leq\min(\widetilde{X}(y,0\,;\widetilde{x}_{d+1},d+1))
\leq\diam(\widetilde{X}(y,0\, ;\widetilde{x}_{d+1},d+1))\leq 2\cdot
t.\end{equation}
Then, using these bounds we apply the law of sines
for the polar sine (see equation~(\ref{equation:law-of-sines})) to
obtain the lower bound
\begin{multline}\label{equation:swap-sines}\pds_{y}\left(\widetilde{X}(y,0\,;\widetilde{x}_{d+1},d+1)\right)
\geq\\\left(\frac{\min\left(\widetilde{X}(y,0\,;\widetilde{x}_{d+1},d+1)\right)}
{\diam\left(\widetilde{X}(y,0\,;\widetilde{x}_{d+1},d+1)\right)}\right)^{\frac{d(d+1)}{2}}
\cdot\pds_{\widetilde{x}_1}\left(\widetilde{X}(y,0\,;\widetilde{x}_{d+1},d+1)\right)
\geq\\ \left(\frac{\lambda_0}{2}\right)^{\frac{d(d+1)}{2}}
\cdot\pds_{\widetilde{x}_1}\left(\widetilde{X}(y,0\,;\widetilde{x}_{d+1},d+1)\right).\end{multline}
Applying Lemma~\ref{lemma:hey-diddle-diddle} to the RHS of
equation~(\ref{equation:swap-sines}), and then applying the RHS
inequality of equation~(\ref{equation:star}) to the resulting
equation gives the inequality
\begin{multline}\label{equation:if-index-is-0}\frac{\pds_{y}^2
\left(\widetilde{X}(y,0\,;\widetilde{x}_{d+1},d+1)\right)}
{\diam\left(\widetilde{X}(y,0\,;\widetilde{x}_{d+1},d+1)\right)^{d(d+1)}}\geq\\
\frac{\omega_{\mu}^2}{2^{(d+1)(d+2)}} \cdot \left(
\frac{\lambda_0}{2}\right)^{d(d+1)} \cdot \left(\frac{\DIst(y,L
[\widetilde{X}(0\,;\widetilde{x}_{d+1},d+1)
])}{t}\right)^2\cdot\frac{1}{t^{d(d+1)}}.\end{multline}
We replace equation~(\ref{equation:diaper-dandy}) (where $1\leq
i\leq d$) and equation~(\ref{equation:if-index-is-0}) (where $i=0$)
by the following equation which holds for all  $0\leq i\leq d$
\begin{multline}\label{equation:alley-gator}\frac{\pds_{\left(\widetilde{X}(y,i\,;\widetilde{x}_{d+1},d+1)\right)_0}^2\left(\widetilde{X}(y,i\,;\widetilde{x}_{d+1},d+1)\right)}
{\diam\left(\widetilde{X}(y,i\,;\widetilde{x}_{d+1},d+1)\right)^{d(d+1)}}\geq\\
\frac{\omega_{\mu}^2}{2^{(d+1)(d+2)}} \cdot
\left(\frac{\lambda_0}{2}\right)^{d(d+1)}
\cdot\left(\frac{\DIst(y,L[\widetilde{X}(i\,;\widetilde{x}_{d+1},d+1)
])}{t}\right)^2\cdot\frac{1}{t^{d(d+1)}}.\end{multline} Combining
equation~(\ref{equation:alley-gator}) with
equations~(\ref{equation:hot-tin-roof})
and~(\ref{equation:war-lord}) implies
equation~\eqref{equation:mr-ed} for the fixed index $i$.

Next, equation~(\ref{equation:mr-ed}) implies
equation~(\ref{equation:barry-manilow})  via the following argument.
We first note that the elements $\widetilde{X}(d+1)$,
$\widetilde{X}(i;\widetilde{x}_{d+1},d+1)$, and
$\widetilde{X}(i;d+1)$ are all non-degenerate, and we define the
orthogonal projections
\begin{alignat*}{3}
P_{d+1} \ & :&& \ \HH \rightarrow L[\widetilde{X}(d+1)]\,,\\
P_i \ & :&& \ \HH \rightarrow L[\widetilde{X}(i;\widetilde{x}_{d+1},d+1)]\,,\\
P_{i,d+1} \ & :&& \ \HH\rightarrow L[\widetilde{X}(i;d+1)]\,.
\end{alignat*}Using these projections we can reduce the situation to  two cases.

The first  is when $P_{d+1}=P_i$, that is,
$L[\widetilde{X}(d+1)]=L[\widetilde{X}(i\,;\widetilde{x}_{d+1},d+1)].$
In this case equation~(\ref{equation:barry-manilow}) holds trivially
by equation~(\ref{equation:mr-ed}) with
$$C_5\geq
\frac{2^{(d+1)(d+2)}}{\omega_{\mu}^2} \cdot \left(
\frac{2}{\lambda_0} \right)^{d(d+1)} \cdot C_6\,.$$

The second  is when $P_{d+1}\not=P_i$. In this case, we rely on the
following inequality.
\begin{multline}\label{equation:hell-froxe-over}\left(\frac{\DIst\left(y,P_{d+1}(y)\right)}{t}\right)^2\leq\left(\frac{\dist(y,P_{d+1}\left(P_i(y)\right)}
{t}\right)^2\leq\\2\cdot\left[\left(\frac{\DIst\left(y,P_i(y)\right)}{t}\right)^2
+\left(\frac{\DIst\left(P_i(y),P_{d+1}(P_i(y))\right)}{t}\right)^2\right].\end{multline}
Integrating the inequality of
equation~(\ref{equation:hell-froxe-over}) over the ball $B_i$ and
applying the inequality of equation~(\ref{equation:mr-ed}), we
obtain the bound
\begin{multline}\label{equation:green-acres}\int_{B_i}\left(\frac{\DIst\left(y,P_{d+1}(y)\right)}
{t}\right)^2\,\ud\mu(y)\leq\\
2\cdot\frac{2^{(d+1)(d+2)}}{\omega_{\mu}^2} \cdot \left(
\frac{2}{\lambda_0} \right)^{d(d+1)}\cdot C_6\cdot
c_{\mathrm{MT}}^2(x,t,\lambda_0)+2\,\int_{B_i}\left(\frac{\DIst\left(P_i(y),P_{d+1}(P_i(y))\right)}{t}\right)^2\,\ud\mu(y).\end{multline}
The only thing remaining is to bound the second term on the RHS of
equation~(\ref{equation:green-acres}).

Since $P_{d+1}\not= P_i$, the $d$-planes $L[\widetilde{X}(d+1)]$ and
$L[\widetilde{X}(i\,;\widetilde{x}_{d+1},d+1)]$ are distinct. Let
$\alpha$ denote the dihedral angle between these two $d$-planes
along their intersection, the  $(d-1)$-plane
$L[\widetilde{X}(i;d+1)].$  We note that $\sin(\alpha)>0$.
Furthermore, for all $y\in B(x,t)$ we have that
\begin{equation}\label{equation:slappy-nappy}\dist\left(P_i(y),P_{d+1}(P_i(y))\right)=\sin(\alpha)\cdot
\dist\left(P_i(y),P_{i,d+1}(P_i(y))\right).\end{equation}

We can bound the RHS of equation~(\ref{equation:slappy-nappy}) by
bounding each of the factors separately. For any $j\not=i, d+1$, we
have the inclusion
$$\widetilde{x}_j=(\widetilde{X}(d+1))_j\in B(x,t)\cap
L[\widetilde{X}(i;d+1)]\subseteq
L[\widetilde{X}(i;\widetilde{x}_{d+1},d+1)].$$Hence we obtain that
for all $y\in B(x,t)$
\begin{equation}\label{equation:lincoln-getty}\dist(P_i(y),P_{i,d+1}(P_i(y)))\leq\|P_i(y)-
\widetilde{x}_j\|=\|P_i(y-\widetilde{x}_j)\|
\leq\|y-\widetilde{x}_j\|\leq2\cdot t\,.\end{equation} To bound
$\sin(\alpha),$ we observe that
\begin{equation}\label{equation:eddie-money}\sin(\alpha)\,=\,\frac{\DIst\left(\widetilde{x}_{d+1},P_{d+1}(\widetilde{x}_{d+1})
\right)}{\DIst\left(\widetilde{x}_{d+1},P_{i,d+1}(\widetilde{x}_{d+1})\right)}.\end{equation}
By equation~(\ref{equation:holy-wawder}) and the product formula for
contents we have
that$$\dist\left(\widetilde{x}_{d+1},P_{i,d+1}(\widetilde{x}_{d+1})\right)\geq\frac{\omega_{\mu}}{2^{d-1}}\cdot
t.$$Combining this lower bound with
equation~(\ref{equation:eddie-money}) we have the upper bound
\begin{equation}\label{equation:sine-thing-dihedral}\sin(\alpha)\leq\,\frac{2^{d-1}}{\omega_{\mu}}\cdot
\frac{\DIst\left(\widetilde{x}_{d+1},P_{d+1}(\widetilde{x}_{d+1})
\right)}{t}=\frac{2^{d-1}}{\omega_{\mu}}\cdot
\frac{\DIst\left(\widetilde{x}_{d+1},L[\widetilde{X}(d+1)])
\right)}{t}\,.\end{equation}
Applying equations~(\ref{equation:lincoln-getty})
and~(\ref{equation:sine-thing-dihedral}) to the RHS of
equation~(\ref{equation:slappy-nappy}), we have the following
uniform upper bound for all $y\in B(x,t)$
\begin{equation}\label{equation:frankenwine}\DIst\left(P_i(y),P_{d+1}(P_i(y))\right)\leq\frac{2^d}{\omega_{\mu}}\cdot
\DIst\left(\widetilde{x}_{d+1},L[\widetilde{X}(d+1)]\right).\end{equation}

Equation~(\ref{equation:frankenwine}),
Lemma~\ref{lemma:there-is-no-other} and the $d$-regularity of $\mu$
imply that
\begin{multline}\label{equation:sandy-duncan}\int_{B_i}\left(\frac{\DIst\left(P_i(y),P_{d+1}(P_i(y))\right)}
{t}\right)^2\,\ud\mu(y)\leq\frac{4^d}{\omega_{\mu}^2}
\cdot\left(\frac{\DIst\left(\widetilde{x}_{d+1},L[\widetilde{X}(d+1)]\right)}{t}\right)^2
\mu(B_i)\leq\\\frac{4^d\cdot C_{\mu}\cdot C_7}{\omega_{\mu}^2}\cdot
c_{\mathrm{MT}}^2(x,t,\lambda_0).\end{multline}
Finally, applying equation~(\ref{equation:sandy-duncan}) to the RHS
of equation~(\ref{equation:green-acres}) finishes the proof of
equation~(\ref{equation:barry-manilow}), and thus concludes
Proposition~\ref{proposition:sappy-homilies}.\qed

\section{The Proof of Theorem~\ref{theorem:funky-salmon}}\label{section:fubani}
Theorem~\ref{theorem:funky-salmon} is an easy consequence of
Theorem~\ref{theorem:doozy} and the following proposition, which
actually holds for any non-negative function on $H^{d+2}$ instead of
$c_{\mathrm{MT}}$.
\begin{proposition}\label{proposition:little-fubini}If
$\lambda>0$, then
\begin{equation}\label{equation:hell-no-we}\int_{B}\,\int_0^{\diam(B)}c_{\mathrm{MT}}^2(x,t,\lambda)\frac{\ud
t}{t^{d+1}}\fdi\mu(x)\leq \left(\frac{2}{\lambda}\right)^d\cdot
\frac{C_{\mu}}{d}\cdot c_{\mathrm{MT}}^2(\mu|_{3\cdot
B},\lambda/2),\end{equation}for any ball $B\subseteq
H$.\end{proposition}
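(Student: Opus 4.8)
The plan is to interchange the order of integration by Tonelli's theorem (all integrands being non-negative) and then estimate, for each fixed simplex $X$, the resulting integral over the pairs $(x,t)$ that contribute. Since $U_{\lambda}(B(x,t))\subseteq B(x,t)^{d+2}\subseteq(3\cdot B)^{d+2}$ whenever $x\in B$ and $0<t\le\diam(B)$, one checks that the left-hand side of \eqref{equation:hell-no-we} equals
$$
\int_{(3\cdot B)^{d+2}}c_{\mathrm{MT}}^2(X)\,\Phi(X)\,\di\mu^{d+2}(X),
\qquad
\Phi(X):=\int_{B}\int_0^{\diam(B)}\uno\{X\in U_{\lambda}(B(x,t))\}\,\frac{\di t}{t^{d+1}}\,\di\mu(x).
$$
It therefore suffices to prove the pointwise bound $\Phi(X)\le (2/\lambda)^d\cdot(C_{\mu}/d)\cdot\uno\{X\in W_{\lambda/2}(3\cdot B)\}$ for $\mu^{d+2}$-a.e.\ $X\in(3\cdot B)^{d+2}$.

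Next I would unravel the membership condition defining $\Phi$. For fixed $X=(x_0,\ldots,x_{d+1})$ one has $X\in U_{\lambda}(B(x,t))$ exactly when $\max_{0\le j\le d+1}\|x_j-x\|\le t$ and $\lambda t\le\min(X)$; combined with $0<t\le\diam(B)$ this forces the inner $t$-integral to run over $\max_j\|x_j-x\|\le t\le\min\{\min(X)/\lambda,\diam(B)\}$, an interval that is nonempty for some $x\in B$ only when $\min(X)>0$. In that case every contributing $x$ lies in $\bigcap_j B(x_j,\min(X)/\lambda)\subseteq B(x_0,\min(X)/\lambda)$, while every contributing $t$ satisfies $\diam(X)\le 2t$ (all vertices of $X$ lie in $B(x,t)$) as well as $\lambda t\le\min(X)$; these combine to give $\min(X)\ge(\lambda/2)\diam(X)>0$, so indeed $X\in W_{\lambda/2}(3\cdot B)$. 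This disposes of the indicator factor.

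For the quantitative estimate I would bound the inner integral by $\int_{\max_j\|x_j-x\|}^{\infty}t^{-(d+1)}\,\di t=\frac1d\bigl(\max_j\|x_j-x\|\bigr)^{-d}$, then use the triangle inequality $\max_j\|x_j-x\|\ge\frac12\diam(X)$ to replace this by $\frac1d(2/\diam(X))^d$. Integrating in $x$ over $B(x_0,\min(X)/\lambda)$ (where $x_0\in\supp(\mu)$ for $\mu^{d+2}$-a.e.\ $X$) and applying the regularity bound \eqref{equation:upper-regularity} yields
$$
\Phi(X)\ \le\ \frac1d\left(\frac{2}{\diam(X)}\right)^{d}\mu\bigl(B(x_0,\min(X)/\lambda)\bigr)\ \le\ \frac{C_{\mu}}{d}\left(\frac{2}{\lambda}\right)^{d}\left(\frac{\min(X)}{\diam(X)}\right)^{d}\ \le\ \frac{C_{\mu}}{d}\left(\frac{2}{\lambda}\right)^{d}.
$$
Substituting this back into the displayed identity for the left-hand side gives \eqref{equation:hell-no-we}, since the integral of $c_{\mathrm{MT}}^2$ over $(3\cdot B)^{d+2}$ restricted to $W_{\lambda/2}(3\cdot B)$ is by definition $c_{\mathrm{MT}}^2(\mu|_{3\cdot B},\lambda/2)$.

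The argument is essentially bookkeeping, so there is no serious obstacle; the one delicate point is the interplay in the last step: one must place the free point $x$ inside a ball centered at a vertex of $X$ — so that $d$-regularity applied at a point of $\supp(\mu)$ produces the $\min(X)^d$ numerator — while simultaneously using the crude bound $\max_j\|x_j-x\|\ge\diam(X)/2$ for that same $x$, so that the $\diam(X)^d$ in the denominator cancels and the ratio is $\le 1$. The appearance of the parameters $\lambda/2$ (rather than $\lambda$) and $3\cdot B$ (rather than $B$) traces back precisely to the inequalities $\diam(X)\le 2t$ and $B(x,t)\subseteq 3\cdot B$.
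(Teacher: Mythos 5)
Your proposal is correct and follows essentially the same route as the paper: swap the order of integration by Fubini/Tonelli, bound the inner $t$-integral by $\frac{1}{d}\bigl(\max_j\|x_j-x\|\bigr)^{-d}$, confine the contributing $x$ to the ball $B\bigl(x_0,\min(X)/\lambda\bigr)$ and apply the upper regularity bound, and observe that the contributing simplices lie in $W_{\lambda/2}(3\cdot B)$ via $U_{\lambda}(B(x,t))\subseteq W_{\lambda/2}(B(x,t))$ and $B(x,t)\subseteq 3\cdot B$. The only (harmless) cosmetic difference is that you use $\max_j\|x_j-x\|\geq\diam(X)/2$ where the paper uses the slightly weaker $\max_j\|x_j-x\|\geq\min(X)/2$; both yield the stated constant $(2/\lambda)^d\,C_{\mu}/d$.
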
Indeed, Theorem~\ref{theorem:doozy} and the
$d$-regularity of $\mu$ imply that
$$\beta_2^2(x,t)\lessapprox\frac{c_{\mathrm{MT}}^2(x,t,\lambda_0)}{t^d},
\textup{ for all }x\in\Supp \text{ and } 0<t\leq\diam(\Supp),$$where
the comparison depends only on $d$ and $C_{\mu}$.  Thus, by
Proposition~\ref{proposition:little-fubini} we obtain the following
estimate for all balls $B$ such that $\diam(B)\leq\diam(\Supp)$:
$$J_2(\mu|_B)\,=\,\int_{B}\,\int_0^{\diam(B)}\beta_2^2(x,t)\frac{\ud
t}{t}\fdi\mu(x)\ \lessapprox\  c_{\mathrm{MT}}^2(\mu|_{3\cdot
B},\lambda_0/2)\,\leq\ c_{\mathrm{MT}}^2\left(\mu|_{3\cdot
B}\right),$$where again the comparison depends only on $d$ and
$C_{\mu}$.

\qed

The rest of this section proves
Proposition~\ref{proposition:little-fubini}.
\subsection{Proof of Proposition~\ref{proposition:little-fubini}}

\subsubsection{Preliminary Notation and Observations}For any $x\in H$ and $0< t<\infty$
we note the following trivial inclusion
\begin{equation}\label{equation:sufficiently-in-separated}U_{\lambda}(B(x,t)) \subseteq W_{\lambda/2}(B(x,t)).\end{equation}
If $B$ is a ball of finite diameter in $H$, let
\begin{equation*}\label{equation:super-saturday}\WW(B)=\left\{(x,X,t)\in\, B\times H^{d+2}\times
\left(0,\diam(B)\right]:\,X\in
U_{\lambda}(B(x,t))=\right\}.\end{equation*} For fixed $(x,X)\in
B\times H^{d+2}$, we define the slice of $\WW(B)$ corresponding to
$(x,X)$:\begin{equation*}\label{equation:itty-bitty}\WW\left(B\big|\,x,X\right)=\left\{\,t>0:(x,X,t)\in
\WW(B)\right\},\end{equation*}and we note that 
\begin{equation}\label{equation:lefty-loo}\WW\left(B\big|\,x,X\right)=\left[\max_{0\leq
i\leq
d+1}\|x_i-x\|\,,\,\frac{\MIn(X)}{\lambda}\right]:=\left[u_1(x,X),u_2(x,X)\right].\end{equation}

We define the following two projections.  Let $P_{1,2}:H\times
H^{d+2}\times (0,\infty)\rightarrow H\times H^{d+2}$ be  such that
$P_{1,2}(x,X,t)=(x,X)$, and let $P_2:H\times H^{d+2}\rightarrow
H^{d+2}$ be the projection such that $P_2(x,X)=X.$ We also adopt the
harmless convention of taking $P_2(x,X,t)=P_2(x,X)=X$.

At last we note that the combination of
equation~(\ref{equation:sufficiently-in-separated}) and the
definition of $\WW(B)$ implies the inclusion
\begin{equation}\label{equation:projection-sufficiently}P_2\left(\WW(B)\right)\subseteq
W_{\lambda/2}(3\cdot B).\end{equation}

\subsubsection{Details of the Proof } We first apply Fubini's
Theorem and the definition of $c^2_{\mathrm{MT}}(x,t,\lambda)$ to
rewrite the integral on the LHS of
equation~(\ref{equation:hell-no-we}) in the following form
\begin{equation}\label{equation:dundee}\int_{B}\,\int_0^{\diam(B)}c_{\mathrm{MT}}^2(x,t,\lambda)\frac{\ud
t}{t^{d+1}}\ud\mu(x)\,=\,\int_{P_{1,2}\left(\WW(B)\right)}c_{\mathrm{MT}}^2(X)\left(\int_{\WW\left(B|x,X\right)}\frac{\ud
t}{t^{d+1}}\right)\ud\mu^{d+3}(x,X).\end{equation} Then, for
$(x,X)\in P_{1,2}\left(\WW(B)\right)$ such that
$\WW\left(B\big|\,x,X\right)\not=\emptyset $, let
\begin{equation}\label{equation:add-a-number}I(x,X)=\int_{\WW\left(B|x,X\right)}\frac{\ud
t}{t^{d+1}}.\end{equation}In view of
equation~(\ref{equation:lefty-loo}) we get that
\begin{equation*}I(x,X)=\int_{u_1(x,X)}^{u_2(x,X)}\frac{\ud
t}{t^{d+1}}.\end{equation*}
We note that $u_1(x,X)>0$ a.e.~on
$P_{1,2}\left(\WW(B)\right)$ (w.r.t.~$\mu^{d+3}$), and thus
$I(x,X)<\infty$ a.e.~on $P_{1,2}\left(\WW(B)\right)$. This yields
the inequality
\begin{equation}\label{equation:clint}I(x,X)=\frac{1}{d}\cdot\left(\frac{1}{u_1(x,X)^d}-
\frac{1}{u_2(x,X)^d}\right)\leq\frac{1}{d}\cdot\frac{1}
{\displaystyle\max_{0\leq i\leq d+1}\|x_i-x\|^d}\  \textup{ a.e.~on
}P_{1,2}\left(\WW(B)\right).\end{equation}
Moreover, we can restrict
our attention to $(x,X)$ such that  $I(x,X)>0$. Defining the set
\begin{equation}\label{equation:lonely-lolita}I^{-1}(0,\infty)=\{(x,X)\in
P_{1,2}\left(\WW(B)\right):0<I(x,X)<\infty\},\end{equation}and
combining equations~(\ref{equation:dundee})-(\ref{equation:clint})
we obtain the inequality
\begin{equation}\label{equation:grunty}\int_{B}\,\int_0^{\diam(B)}c_{\mathrm{MT}}^2(x,t,\lambda)\frac{\ud
t}{t^{d+1}}\ud\mu(x)\leq\frac{1}{d}
\int_{I^{-1}(0,\infty)}\frac{c_{\mathrm{MT}}^2(X)}{\displaystyle\max_{0\leq
i\leq d+1}\|x_i-x\|^d}\,\ud\mu^{d+3}(x,X).\end{equation}
In order to estimate the RHS of equation~(\ref{equation:grunty}) we
again apply Fubini's Theorem. More specifically, for any $X\in
P_2\left(I^{-1}(0,\infty)\right)$ we define
$$ I^{-1}(0,\infty)\big|X=\{x\in H:(x,X)\in I^{-1}(0,\infty)\},$$
and thus rewrite equation~(\ref{equation:grunty}) as follows.
\begin{multline}\label{equation:grunty2}
\int_{B}\,\int_0^{\diam(B)}c_{\mathrm{MT}}^2(x,t)\frac{\ud
t}{t^{d+1}}\ud\mu(x)
\leq\\
\frac{1}{d}\,\int_{P_2\left(I^{-1}(0,\infty)\right)}c_{\mathrm{MT}}^2(X)\left[\int_{I^{-1}(0,\infty)\big|
X}\frac{\ud\mu(x)}{\displaystyle\max_{0\leq i\leq
d+1}\|x_i-x\|^d}\right]\ud\mu^{d+2}(X) \,.\end{multline}

We next bound the inner integral of the above equation for
a.e.~$X\in P_2\left(I^{-1}(0,\infty)\right)$, that is, the integral
\begin{equation}\label{equation:korea}\int_{ I^{-1}(0,\infty)\big|X}\frac{\ud\mu(x)}{\displaystyle\max_{0\leq
i\leq d+1}\|x_i-x\|^d}\,.\end{equation}
If $X\in P_2\left(I^{-1}(0,\infty)\right)$ and $x\in
I^{-1}(0,\infty)\big| X$, then
\begin{equation*}\label{equation:mulberry-street}\frac{\MIn(X)}{\lambda}>\max_{0\leq i\leq
d+1}\|x_i-x\|>0.\end{equation*}Hence, for fixed $X\in
 P_2\left(I^{-1}(0,\infty)\right)$, with $x_0=(X)_0$, we have the set inclusion
$$I^{-1}(0,\infty)\big| X\subseteq
B\left(x_0,\lambda^{-1}\cdot\MIn(X)\right).$$Thus the integral of
equation~(\ref{equation:korea}) is bounded by
\begin{equation*}\label{equation:tonkin}\int_{B\left(x_0,\lambda^{-1}\cdot\MIn(X)\right)}\frac{\ud\mu(x)}
{\displaystyle\max_{0\leq i\leq d+1}\|x_i-x\|^d}.\end{equation*}
Furthermore, $\MIn(X)>0$ a.e.~on $H^{d+2},$ and by the triangle
inequality we obtain
$$\MIn(X)\leq2\cdot\max_{0\leq i\leq d+1}\|x_i-x\|.$$Combining these observations with the upper bound of
equation~(\ref{equation:upper-regularity}), we have the following
inequality for a.e.~$X\in P_2\left(I^{-1}(0,\infty)\right)$:
\begin{equation*}\int_{I^{-1}(0,\infty)\big| X}\frac{\ud\mu(x)}{\displaystyle\max_{0\leq i\leq
d+1}\|x_i-x\|^d}\leq
2^d\int_{B\left(x_0,\lambda^{-1}\cdot\MIn(X)\right)}\frac{\ud\mu(x)}{\MIn(X)^d}\leq
\left(\frac{2}{\lambda}\right)^d\cdot C_{\mu}.\end{equation*}

Applying this uniform bound to the RHS of
equation~(\ref{equation:grunty2}) we get that
\begin{equation}\label{equation:grunty-1}
\int_{B}\,\int_0^{\diam(B)}c_{\mathrm{MT}}^2(x,t)\frac{\ud
t}{t^{d+1}}\ud\mu(x) \leq
\left(\frac{2}{\lambda}\right)^d\cdot\frac{C_{\mu}}
{d}\,\int_{P_2\left(I^{-1}(0,\infty)\right)}c_{\mathrm{MT}}^2(X)\,\ud\mu^{d+2}(X).\end{equation}
Further application of
equations~(\ref{equation:projection-sufficiently})
and~\eqref{equation:lonely-lolita} bounds the integral on the RHS of
equation~(\ref{equation:grunty-1}) by
$c_{\mathrm{MT}}^2(\mu|_{3\cdot B},\lambda/2)$ and thus concludes
the proof.\qed

\section{A Menagerie of Curvatures}\label{section:curvatures}
Here we discuss a variety of curvatures for $d$-regular measures on
$H$. In Subsection~\ref{subsection:squared-curvatures} we describe
some curvatures that can be used to characterize uniform
rectifiability, while indicating two levels of information needed
for this purpose. In Subsection~\ref{subsection:p-q} we briefly give
an example of continuous curvatures that can be used to quantify the
$(p,p)$-geometric property ($1\leq p<\infty$) of David and
Semmes~\cite{DS93}. Finally, in Subsection~\ref{subsection:leger} we
discuss our doubts about the utility of a previously suggested
curvature for the purposes of implying the rectifiability of
$\mu$~\cite[Theorem~0.3]{Leger}.
\subsection{Curvatures Characterizing Uniform Rectifiability}\label{subsection:squared-curvatures}
%
%
We start with a few continuous curvatures that are completely
equivalent to the Jones-type flatness (in the sense of
Theorems~\ref{theorem:upper-main} and~\ref{theorem:funky-salmon}).
They thus characterize the uniform rectifiability of $\mu$ by the
criterion that the ratios between the curvatures of $\mu|_B$ and the
corresponding measures $\mu(B)$ are uniformly bounded for all balls
$B$ in $H$. We remark that here the continuous curvature of $\mu|_B$
is obtained by integrating a corresponding discrete curvature over
all $(d+1)$-simplices in $B^{d+2}$.

It is also possible to use a coarser level of information by
introducing the parameter $\lambda$ and modifying the continuous
curvature of $\mu|_B$ by integrating over the well-scaled set of
simplices $W_{\lambda}(B)$ (see equation~\eqref{equation:big-W}). In
the case of the Menger-type curvatures, both types of continuous
curvatures are comparable (up to possible blow ups of the ball $B$).
We thus say that the Menger-type curvature is stable (when $\lambda$
approaches zero). In Subsection~\ref{subsubsection:tilde-curvature}
we present a discrete curvature for which we can easily compare the
Jones-type flatness with the latter type of continuous curvature
(with parameter $\lambda$). Currently, we cannot decide if this
curvature is stable and thus cannot use the former version of
continuous curvature to characterize uniform rectifiability.

\subsubsection{Additional Stable Curvatures}\label{subsec:menger_type_zoo}We define
the following discrete curvatures
$$c_{\mathrm{min}}(X)=\sqrt{\frac{\displaystyle\min_{0\leq i\leq d+1}\pds_{x_i}^2(X)}{\diam(X)^{d(d+1)}}}\,,$$
$$c_{\mathrm{vol}}(X)=\sqrt{\frac{\MM^{2}_{d+1}(X)}{\diam(X)^{(d+1)(d+2)}}}\,,$$and$$c_{\mathrm{max}}(X)
=\sqrt{\frac{\max_{0\leq i\leq
d+1}\pds_{x_i}^2(X)}{\diam(X)^{d(d+1)}}}\,.$$
We note that for all
$X\in H^{d+2}$
with
$\min(X)>0$:\begin{equation*}\label{equation:stinky-pinky}c_{\mathrm{MT}}^2(X)\approx
c^2_{\mathrm{max}}(X)\geq c_{\mathrm{min}}^2(X)\geq
c_{\mathrm{vol}}^2(X)\,.\end{equation*}
Furthermore, we note that
for $\lambda>0$ and any $1$-separated element $X$,
$$c_{\mathrm{vol}}^2(X)\geq\lambda^{2(d+1)}\cdot c_{\mathrm{MT}}^2(X).$$

As such, analogous estimates to those of
Theorems~\ref{theorem:upper-main} and~\ref{theorem:funky-salmon}
hold for the curvatures $c_{\mathrm{min}}$, $c_{\mathrm{vol}}$, and
$c_{\mathrm{max}}$. In particular, the continuous curvatures
(integrated over all simplices in corresponding products of balls)
$c_{\mathrm{min}}$, $c_{\mathrm{MT}}$, $c_{\mathrm{vol}}$ and
$c_{\mathrm{max}}$ are comparable (up to possible blow ups of the
underlying balls).

We can further extend this collection of stable curvatures. For
example, we may include any order statistics of the p-sines of
vertices of a given simplex (replacing the maximum or the minimum,
which are used in $c_{\mathrm{max}}$ and $c_{\mathrm{min}}$
respectively).

\subsubsection{An Algebraic Curvature with Questionable
Stability}\label{subsubsection:tilde-curvature}For $X\in H^{d+2}$,
let
\begin{equation*} c_{\mathrm{alg}}(X)=\begin{cases}\displaystyle\frac{\pds_{x_0}(X)}{\displaystyle\prod_{1\leq i<j\leq d+1}\|x_i-x_j\|},&\textup{ if }\min(X)>0,\\
\quad\quad\quad\quad 0,&\textup{ otherwise.
}\end{cases}\end{equation*}
We see that unlike the curvatures of
Subsection~\ref{subsec:menger_type_zoo}, this one is algebraic. In
fact, it is the invariant ratio of the law of polar sines expressed
in equation~\eqref{equation:law-of-sines}.

We trivially have the inequality
\begin{equation*}\label{equation:burrito} c_{\mathrm{alg}}^2(X)\geq c_{\mathrm{MT}}^2(X),\textup{ for all }X\in
H^{d+2}.\end{equation*}
Furthermore,  if $X$ is $1$-separated for
$\lambda>0$, then we also have the opposite inequality:
$$c_{\mathrm{MT}}^2(X)\geq\lambda^{d(d+1)}\cdot c_{\mathrm{alg}}^2(X).$$
Hence, for any $0<\lambda\leq\lambda_0$ (where $\lambda_0$ is the
constant suggested by Theorem~\ref{theorem:doozy}) and all balls
$B\subseteq H$ we have the estimate (with constants depending on
$d$, $C_{\mu}$, and $\lambda$):
\begin{equation} \label{eq:compare_with_lambda} J_2(\mu|_{\frac{1}{3}\cdot B})\lessapprox
 c_{\mathrm{alg}}^2(\mu|_B,\lambda/2)\lessapprox J_2(\mu|_{6\cdot
B})\,.\end{equation}
Thus, fixing such $\lambda$ one can use the curvatures
$\{c_{\mathrm{alg}}^2(\mu|_B,\lambda/2)\}_{B \subseteq H}$ to
characterize uniform rectifiability.

We are not sure whether one can replace the curvature
$c_{\mathrm{alg}}^2(\mu|_B,\lambda/2)$ in
equation~\eqref{eq:compare_with_lambda} by the curvature
$c_{\mathrm{alg}}^2(\mu|_B)$ (even with the introduction of blow ups
of the ball $B$ in that equation). That is, we cannot decide at this
point if the algebraic curvature is stable or not. It is clear
though that our methods for controlling $c_{\mathrm{MT}}^2(\mu|_B)$
by $J_2(\mu|_{6\cdot B})$ (as expressed in
Theorem~\ref{theorem:upper-main} and established in~\cite{LW-part1})
are insufficient for controlling $c_{\mathrm{alg}}^2(\mu|_B)$ by
$J_2(\mu|_{C \cdot B})$ for some $C>1$ (independent of $B$).

\subsection{Curvatures Characterizing the $(p,p)$-Geometric Property ($1\leq p<\infty$)}\label{subsection:p-q}For
$1\leq p<\infty$, let
\begin{equation*} \widetilde{J}_p(\mu|_B)=\int_{B}\int_{0}^{\diam(B)}\beta_p^p(x,t)\di\mu(x)\frac{\di
t}{t}.\end{equation*}
We note that if $p \neq 2$, then $\widetilde{J}_p(\mu|_B)$ differs
from ${J}_p(\mu|_B)$ (defined in
equation~\eqref{equation:jone-flat-int}) in the power of
$\beta_p(x,t)$.
 A $d$-regular measure $\mu$ on $H$ satisfies the
$(p,p)$-geometric property~\cite[Part~IV]{DS93} if there exists a
constant $C=C(\mu)$ such that
\begin{equation*}\label{equation:numbly-number} \widetilde{J}_p(\mu|_B)\leq C\cdot\mu(B),\textup{ for all
balls }B\subseteq H.\end{equation*}

The methods of this paper and~\cite{LW-part1} extend to comparing
$\widetilde{J}_p$ with a different kind of continuous curvature as
follows.
\begin{theorem}\label{theorem:higher-beta-integral}If $\mu$ is a
$d$-regular measure on $H$ and $1\leq p<\infty$, then there exists a
constant $C_8=C_8(d,C_{\mu},p)$ such that
\begin{equation*}\label{equation:higher-beta}\frac{1}{C_8}\cdot  \widetilde{J}_p\left(\mu|_{\frac{1}{3}\cdot B}\right)\leq
\int_{B^{d+2}}\frac{\pds_{x_0}^p(X)}{\diam(X)^{d(d+1)}}
\di\mu^{d+2}(X)\leq\\C_8\cdot
 \widetilde{J}_p\left(\mu|_{6\cdot B}\right),\end{equation*}for
all balls $B\in H$\end{theorem}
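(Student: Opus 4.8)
The plan is to prove the two inequalities separately, each by upgrading a previously established $p=2$ estimate to general $p\ge 1$. Write
$$
c_{\mathrm{MT},p}^{p}(\mu|_B)=\int_{B^{d+2}}\frac{\pds_{x_0}^{p}(X)}{\diam(X)^{d(d+1)}}\di\mu^{d+2}(X),
$$
so the claim reads $C_8^{-1}\,\widetilde J_p(\mu|_{\frac{1}{3}\cdot B})\le c_{\mathrm{MT},p}^{p}(\mu|_B)\le C_8\,\widetilde J_p(\mu|_{6\cdot B})$. Observe that one must work with $\widetilde J_p$ (carrying the power $\beta_p^p$) rather than $J_p$, because $\beta_p^p(x,t)$ is itself an \emph{average} of $(\dist(y,L)/2t)^p$, so the $p$-th powers of distances produced below enter linearly and the exponents balance; with $\beta_p^2$ the bookkeeping would fail.

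For the right-hand inequality I would follow the proof of Theorem~\ref{theorem:upper-main} in~\cite{LW-part1} line by line, replacing every square by a $p$-th power. The pointwise input is the comparison $\pds_{x_0}(X)\lessapprox \diam(X)^{-1}\sum_{j=0}^{d+1}\dist(x_j,L)$ for an arbitrary $d$-plane $L$ (badly scaled simplices being treated separately as in~\cite{LW-part1}); raising it to the $p$-th power via $\bigl(\sum_j a_j\bigr)^{p}\le (d+2)^{p-1}\sum_j a_j^{p}$ gives $\pds_{x_0}^p(X)\lessapprox\diam(X)^{-p}\sum_j\dist(x_j,L)^p$. The only structural change is that wherever the $p=2$ argument used Cauchy--Schwarz or the Pythagorean identity to split the tilt of $L[X(d+1)]$ relative to a reference plane, one now uses H\"older's inequality together with the same convexity bound, at the cost of a $p$-dependent constant. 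The subsequent dyadic decomposition in $\diam(X)\approx r$, combined with the upper $d$-regularity bound~\eqref{equation:upper-regularity} to integrate out the $d+1$ auxiliary vertices against the best $d$-plane at scale $r$ around $x_0$, then leaves $\lessapprox\widetilde J_p(\mu|_{6\cdot B})$; the exponents work out because $\diam(X)^{d(d+1)}$ is independent of $p$ while both $\dist^p$ and $\beta_p^p\,r^p$ scale with $p$.

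For the left-hand inequality I would carry out the $p$-analogues of Theorem~\ref{theorem:doozy} and Proposition~\ref{proposition:little-fubini}. Setting $c_{\mathrm{MT},p}^{p}(x,t,\lambda)=\int_{U_{\lambda}(B(x,t))}\pds_{x_0}^{p}(X)\diam(X)^{-d(d+1)}\di\mu^{d+2}(X)$, I would rerun all of Section~\ref{section:doozy-1} with $\pds^2$ replaced throughout by $\pds^p$: the $d$-separation input (Proposition~\ref{proposition:rainy-may-day}) is untouched; the sets $\mathcal E(\rho)$, $\mathcal A(\rho)$, $\mathcal A_i(\rho)$ and all the Chebychev and Fubini steps go through verbatim with $p$-th powers; the polar-sine lower bound of Lemma~\ref{lemma:hey-diddle-diddle} and the law of polar sines~\eqref{equation:law-of-sines} are simply raised to the power $p$; and in the proof of Proposition~\ref{proposition:sappy-homilies} one replaces $(a+b)^2\le 2(a^2+b^2)$ by $(a+b)^p\le 2^{p-1}(a^p+b^p)$. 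This produces, for each $x\in\Supp$ and $0<t\le\diam(\Supp)$, a $d$-plane $L_{(x,t)}$ with $\int_{B(x,t)}\bigl(\dist(y,L_{(x,t)})/2t\bigr)^p\di\mu(y)\lessapprox c_{\mathrm{MT},p}^{p}(x,t,\lambda_0)$, hence $\beta_p^p(x,t)\lessapprox c_{\mathrm{MT},p}^{p}(x,t,\lambda_0)/t^d$ by $d$-regularity. The proof of Proposition~\ref{proposition:little-fubini} is insensitive to the integrand and applies with $\pds^p$ in place of $\pds^2$, yielding $\int_B\int_0^{\diam(B)}c_{\mathrm{MT},p}^{p}(x,t,\lambda)\,t^{-(d+1)}\di t\,\di\mu(x)\lessapprox c_{\mathrm{MT},p}^{p}(\mu|_{3\cdot B})$; applying this with $\tfrac{1}{3}\cdot B$ in place of $B$ and combining with the previous estimate gives $\widetilde J_p(\mu|_{\frac{1}{3}\cdot B})=\int_B\int_0^{\diam(B)}\beta_p^p(x,t)\frac{\fdi t}{t}\di\mu(x)\lessapprox c_{\mathrm{MT},p}^{p}(\mu|_B)$.

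The main obstacle is the left-hand inequality, namely faithfully reproducing Section~\ref{section:doozy-1} for $p$-th powers. No step changes shape, but one must check that every place where the $p=2$ argument tacitly uses Hilbert-space orthogonality survives with only the convexity inequality $(a+b)^p\le 2^{p-1}(a^p+b^p)$ — above all the reduction in Proposition~\ref{proposition:sappy-homilies} that compares $\dist(y,L[\widetilde X(d+1)])$ with $\dist(y,L[\widetilde X(i;\widetilde x_{d+1},d+1)])$ through the dihedral angle between the two $d$-planes. The right-hand inequality is comparatively routine, the only care being the systematic use of H\"older rather than Cauchy--Schwarz; in both directions the constants acquire a dependence on $p$, consistent with $C_8=C_8(d,C_\mu,p)$.
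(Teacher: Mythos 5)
Your proposal is correct and is essentially the proof the paper has in mind: the paper gives no separate argument for Theorem~\ref{theorem:higher-beta-integral}, merely asserting that the methods of Theorem~\ref{theorem:upper-main} (from part I) and of Theorem~\ref{theorem:doozy} together with Proposition~\ref{proposition:little-fubini} extend from squares to $p$-th powers against $\widetilde J_p$, which is precisely what you carry out (Chebychev, Fubini, the $d$-separation input, Lemma~\ref{lemma:hey-diddle-diddle}, the law of polar sines and the dihedral-angle step are all first-power or measure-theoretic facts, so only H\"older/convexity replaces Cauchy--Schwarz). Your observation that the exponent bookkeeping forces $\widetilde J_p$ (with $\beta_p^p$) rather than $J_p$ also matches the paper's discussion in Subsection~\ref{subsection:p-q}.
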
 We thus obtain the following
characterization of the $(p,p)$-geometric property.
\begin{corollary} \label{cor:pp} If $1 \leq p < \infty$ and $\mu$ is a $d$-regular measure on $H$, then $\mu$ satisfies the
$(p,p)$-geometric property if and only if there exists a constant
$C=C(d,C_{\mu})$ such that
$$\int_{B^{d+2}}  \frac{\pds_{x_0}^p(X)}{\diam(X)^{d(d+1)}} \di\mu^{d+2}(X)\leq C\cdot\mu(B)$$
for all balls $B\subseteq H$.\end{corollary}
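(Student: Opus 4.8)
The plan is to read off Corollary~\ref{cor:pp} directly from Theorem~\ref{theorem:higher-beta-integral}: the two-sided comparison there already carries all of the analysis, and the only remaining work is to unwind the definition of the $(p,p)$-geometric property and to swallow the dilation factors $\tfrac13$ and $6$ using the $d$-regularity of $\mu$. Recall that $\mu$ has the $(p,p)$-geometric property precisely when $\widetilde{J}_p(\mu|_B)\lessapprox\mu(B)$ for all balls $B\subseteq H$; so it is enough to show that, up to constants depending only on $d$, $C_\mu$ and $p$, the quantity $\sup_B\mu(B)^{-1}\widetilde{J}_p(\mu|_B)$ is finite if and only if $\sup_B\mu(B)^{-1}\int_{B^{d+2}}\pds_{x_0}^p(X)\,\diam(X)^{-d(d+1)}\,\di\mu^{d+2}(X)$ is finite, the two suprema controlling one another.

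For the ``only if'' implication, suppose $\widetilde{J}_p(\mu|_B)\le C\cdot\mu(B)$ for all balls $B$ and fix a ball $B\subseteq H$; we may assume $\mu(B)>0$, since otherwise $\mu^{d+2}(B^{d+2})=\mu(B)^{d+2}=0$ and the left-hand side of the asserted curvature inequality vanishes. Applying the right-hand inequality of Theorem~\ref{theorem:higher-beta-integral} to $B$ and then the hypothesis to the ball $6\cdot B$ gives
\[
\int_{B^{d+2}}\frac{\pds_{x_0}^p(X)}{\diam(X)^{d(d+1)}}\,\di\mu^{d+2}(X)\ \le\ C_8\cdot\widetilde{J}_p\left(\mu|_{6\cdot B}\right)\ \le\ C_8\cdot C\cdot\mu(6\cdot B),
\]
and $\mu(6\cdot B)\lessapprox\mu(B)$ by the $d$-regularity of $\mu$ --- immediate when $B$ is centred on $\supp(\mu)$, and reduced to that case for a general $B$ with $\mu(B)>0$ by enlarging $B$ to the concentric support-centred ball of twice the radius. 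This yields the claimed curvature bound.

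Conversely, suppose the curvature bound holds with constant $C$ and fix an arbitrary ball $B'\subseteq H$. Applying the left-hand inequality of Theorem~\ref{theorem:higher-beta-integral} to the ball $3\cdot B'$, so that $\tfrac13\cdot(3\cdot B')=B'$, and then the hypothesis on $(3\cdot B')^{d+2}$ gives
\[
\widetilde{J}_p\left(\mu|_{B'}\right)\ \le\ C_8\cdot\int_{(3\cdot B')^{d+2}}\frac{\pds_{x_0}^p(X)}{\diam(X)^{d(d+1)}}\,\di\mu^{d+2}(X)\ \le\ C_8\cdot C\cdot\mu(3\cdot B'),
\]
and again $\mu(3\cdot B')\lessapprox\mu(B')$ by $d$-regularity, so $\mu$ has the $(p,p)$-geometric property. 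In both directions the only point that is not a one-line deduction is this harmless bookkeeping with the dilation factors and the standard comparison of $\mu$-masses of concentric balls of comparable radius; accordingly there is no genuine obstacle here, the substantive estimate being precisely Theorem~\ref{theorem:higher-beta-integral}.
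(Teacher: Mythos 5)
Your overall route is exactly the paper's: Corollary~\ref{cor:pp} is stated there as an immediate consequence of Theorem~\ref{theorem:higher-beta-integral}, with the dilation factors $\tfrac13$ and $6$ to be absorbed by $d$-regularity, and both directions of your deduction (including the trivial case $\mu(B)=0$) follow that intended one-line argument. The problem is the step you dismiss as harmless bookkeeping: the claim that $\mu(6\cdot B)\lessapprox\mu(B)$ (and likewise $\mu(3\cdot B')\lessapprox\mu(B')$) for an \emph{arbitrary} ball with $\mu(B)>0$, with constant depending only on $d$ and $C_\mu$. This is false, and your proposed reduction does not prove it: choosing $x'\in\supp(\mu)\cap B$ and enlarging to $B(x',2r)\supseteq B$ gives $\mu(6\cdot B)\leq\mu(B(x',7r))\lessapprox r^d\approx\mu(B(x',2r))$, but provides no lower bound of order $r^d$ for $\mu(B)$ itself. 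Concretely, let $\mu$ be $d$-dimensional Lebesgue measure on a $d$-plane $L\subseteq H$ and $B=B(x,r)$ with $\dist(x,L)=r-\epsilon$; then $\mu(B)\approx(r\epsilon)^{d/2}$ while $\mu(6\cdot B)\approx r^d$, so $\mu(6\cdot B)/\mu(B)\to\infty$ as $\epsilon\to0$. The mass comparison you invoke fails precisely for balls that barely graze $\supp(\mu)$.

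The gap disappears if the Carleson conditions on both sides are quantified over balls centred on $\supp(\mu)$ (the standard convention in the David--Semmes framework, and surely what the paper intends in treating the corollary as immediate): for such balls $\mu(6\cdot B)\leq 6^d\cdot C_\mu^2\cdot\mu(B)$ whenever the radius is at most $\diam(\supp(\mu))$, and trivially $\mu(6\cdot B)=\mu(B)$ for larger radii, so your two displayed chains of inequalities finish the proof exactly as intended. But if ``for all balls $B\subseteq H$'' is read literally, then for a sliver ball your argument only controls the curvature integral by $\widetilde{J}_p(\mu|_{6\cdot B})\lessapprox\mu(6\cdot B)\approx r^d$ (and symmetrically in the converse direction by $\mu(3\cdot B')$), which can be far larger than $\mu(B)$; passing from support-centred balls to arbitrary balls is then a genuine additional issue, not the routine comparison of concentric masses you assert, and as written that step would fail. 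Either restrict to centred balls explicitly or supply a separate argument for balls not centred on the support.
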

If $p=2$ then Theorem~\ref{theorem:higher-beta-integral} coincides
with the combination of Theorems~\ref{theorem:upper-main}
and~\ref{theorem:funky-salmon}. Similarly, in that case
Corollary~\ref{cor:pp} coincides with~\cite[Theorem~1.3]{LW-part1}.

\subsection{A Previously Suggested Curvature}
\label{subsection:leger}

L{\'e}ger~\cite{Leger} proposed the following discrete curvature for
$(d+1)$-simplices $X=(x_0,\ldots,x_{d+1})\in H^{d+2}$ where
$d\geq1$:
\begin{equation*}\label{equation:def-leger-curv}
c^{d+1}_L(X)=\frac{\dist(x_0,L[X(0)])^{d+1}}
{\prod_{i=1}^{d+1}\|x_i-x_0\|^{d+1}},\end{equation*}and the
corresponding continuous curvature for $\mu$ restricted to any ball
$B\subseteq H$
$$c_L^{d+1}(\mu|_B)=\int_{B^{d+2}}c_L^{d+1}(X)\di\mu^{d+2}(X).$$
If $d=1$, then his curvature coincides with the Menger
curvature~\cite{MMV96,M30} (up to multiplication by a constant). He
showed how to use his curvature in that case to infer rectifiability
properties of $\mu$. In particular, he established
Theorem~\ref{theorem:funky-salmon} when $d=1$.

L{\'e}ger's approach for proving the same type of results for
$d\geq2$ (while using the curvature $c_L^{d+1}(X)$) ostensibly
requires a bound of the form
\begin{equation*}\label{equation:wanted-no}J_2(\mu|_B)\lessapprox
c_L^{d+1}(\mu|_{3\cdot B}),\end{equation*}thus
generalizing~\cite[Lemma~2.5]{Leger}. However, any analysis or
adaptation of the proof of~\cite[Lemma~2.5]{Leger} to the curvature
$c_L^{d+1}(X)$, where $d>1$, seems to give at best the following
lower bound.
\begin{proposition}\label{proposition:leger-true}There exists
a constant $C_9=C_9(d,C_{\mu})$ such that
\begin{equation*} \widetilde{J}_{d+1}\left(\mu|_{\frac{1}{3}\cdot B}\right)\leq C_9 \cdot c_L^{d+1}(\mu|_B)\end{equation*}for any ball
$B\subseteq H$.
\end{proposition}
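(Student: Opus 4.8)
The plan is to reproduce, for the curvature $c_L^{d+1}$ and the exponent $d+1$, the two‑step proof of Theorem~\ref{theorem:funky-salmon}. Throughout let $\lambda_0$ be the constant supplied by Theorem~\ref{theorem:doozy}, and set, in analogy with $c_{\mathrm{MT}}^2(x,t,\lambda)$ and $c_{\mathrm{MT}}^2(\mu|_B,\lambda)$,
$$c_L^{d+1}(x,t,\lambda)=\int_{U_{\lambda}(B(x,t))}c_L^{d+1}(X)\,\ud\mu^{d+2}(X),\qquad c_L^{d+1}(\mu|_B,\lambda)=\int_{W_{\lambda}(B)}c_L^{d+1}(X)\,\ud\mu^{d+2}(X).$$
The core assertion is a ball‑wise bound of ``Theorem~\ref{theorem:doozy}-type'': there is a constant $C=C(d,C_\mu)$ such that for every $x\in\Supp$ and $0<t\leq\diam(\Supp)$ there is a $d$-plane $L_{(x,t)}$ with
$$\int_{B(x,t)}\left(\frac{\dist(y,L_{(x,t)})}{2\cdot t}\right)^{d+1}\ud\mu(y)\ \leq\ C\cdot c_L^{d+1}(x,t,\lambda_0),$$
which, by $d$-regularity, yields $\beta_{d+1}^{d+1}(x,t)\lessapprox c_L^{d+1}(x,t,\lambda_0)/\mu(B(x,t))\lessapprox c_L^{d+1}(x,t,\lambda_0)/t^{d}$. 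Granting this, the proposition follows exactly as Theorem~\ref{theorem:funky-salmon} follows from Theorem~\ref{theorem:doozy}: since Proposition~\ref{proposition:little-fubini} is valid for an arbitrary non‑negative function on $H^{d+2}$, applying it with $c_{\mathrm{MT}}^2$ replaced by $c_L^{d+1}$ and with $\frac{1}{3}\cdot B$ in place of $B$ gives
$$\widetilde{J}_{d+1}\!\left(\mu|_{\frac{1}{3}\cdot B}\right)=\int_{\frac{1}{3}\cdot B}\int_0^{\diam(\frac{1}{3}\cdot B)}\beta_{d+1}^{d+1}(x,t)\frac{\ud t}{t}\ud\mu(x)\ \lessapprox\ \int_{\frac{1}{3}\cdot B}\int_0^{\diam(\frac{1}{3}\cdot B)}c_L^{d+1}(x,t,\lambda_0)\frac{\ud t}{t^{d+1}}\ud\mu(x)\ \lessapprox\ c_L^{d+1}(\mu|_B,\lambda_0/2)\leq c_L^{d+1}(\mu|_B),$$
with all constants depending only on $d$ and $C_\mu$.

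To prove the ball‑wise bound I would run the argument of Section~\ref{section:doozy-1} with the ``test vertex'' placed in the distinguished $0$-th slot (the one singled out by $c_L^{d+1}$) and the remaining $d+1$ vertices forming the ``base'' that spans $L[X(0)]$. Fix a $d$-separated collection of $d+2$ balls $\{B_j\}_{j=0}^{d+1}$ in $B(x,t)$ as in Proposition~\ref{proposition:rainy-may-day}; use $B_1,\ldots,B_{d+1}$ for the base and keep $B_0$ in reserve. The crucial simplification relative to the Menger‑type case is that, because $c_L^{d+1}(X)=\dist(x_0,L[X(0)])^{d+1}/\prod_{i=1}^{d+1}\|x_i-x_0\|^{d+1}$ and every edge of a simplex in $B(x,t)$ has length at most $2\cdot t$, one has the elementary pointwise inequality
$$c_L^{d+1}(X)\ \geq\ \frac{1}{(2\cdot t)^{(d+1)^2}}\cdot\dist(x_0,L[X(0)])^{d+1}\qquad\text{for all }X\in B(x,t)^{d+2};$$
in particular no polar‑sine lower bound (no analogue of Lemma~\ref{lemma:hey-diddle-diddle}) is needed, and $d$-separation enters only to keep the relevant spanning sets non‑degenerate and the relevant balls pairwise well separated. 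Running the Chebychev--Fubini scheme of Subsections~\ref{subsection:big-proof}--\ref{subsection:lemma-1} --- all integrands being linear in the curvature, the arguments are insensitive to the exponent --- produces a base $\widetilde X(0)=(\widetilde x_1,\ldots,\widetilde x_{d+1})\in\prod_{j=1}^{d+1}\frac{1}{2}\cdot B_j$ lying in the obvious $c_L^{d+1}$/exponent‑$(d+1)$ analogues of the sets $\mathcal{E}(\rho_1)$ and $\mathcal{A}(\rho_2)$. Combining the analogue of the inclusion~\eqref{equation:best-inclusion}, namely $B(x,t)\setminus\bigcup_{j=1}^{d+1}B_j\subseteq U_{\lambda_0}(x,t\,|\,\widetilde X(0))$, with the pointwise inequality above and the defining condition of $\mathcal{E}(\rho_1)$ gives
$$\int_{B(x,t)\setminus\bigcup_{j=1}^{d+1}B_j}\left(\frac{\dist(y,L[\widetilde X(0)])}{t}\right)^{d+1}\ud\mu(y)\ \lessapprox\ c_L^{d+1}(x,t,\lambda_0).$$

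What remains --- and this is the step I expect to be the main obstacle --- is to control $\int_{B_j}(\dist(y,L[\widetilde X(0)])/t)^{d+1}\ud\mu(y)$ for each $1\leq j\leq d+1$; this is the $c_L^{d+1}$/exponent‑$(d+1)$ counterpart of Lemma~\ref{lemma:there-is-no-other} and Proposition~\ref{proposition:sappy-homilies}, and here the now‑reserved ball $B_0$ is used. Following that template: membership of $\widetilde X(0)$ in $\mathcal{A}(\rho_2)$ lets one select, by a Chebychev argument over $\frac{1}{2}\cdot B_0$ as in the proof of Lemma~\ref{lemma:there-is-no-other}, a point $\widetilde x^{*}\in\frac{1}{2}\cdot B_0\cap\Supp$ for which the base $\widetilde X_j$ obtained from $\widetilde X(0)$ by replacing $\widetilde x_j$ with $\widetilde x^{*}$ has a controlled curvature slice (the analogue of~\eqref{equation:hot-tin-roof}) and satisfies $(\dist(\widetilde x^{*},L[\widetilde X(0)])/t)^{d+1}\lessapprox c_L^{d+1}(x,t,\lambda_0)/t^{d}$ (the analogue of~\eqref{equation:dog-day-after}). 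Since $B_j$ is well separated from every ball defining $\widetilde X_j$, for every $y\in B_j$ the full tuple $(y,\widetilde X_j)$ lies in $U_{\lambda_0}(B(x,t))$, so the pointwise inequality and the controlled slice give $\int_{B_j}\dist(y,L[\widetilde X_j])^{d+1}\ud\mu(y)\lessapprox t^{d+1}\cdot c_L^{d+1}(x,t,\lambda_0)$. One then passes from $L[\widetilde X_j]$ back to $L[\widetilde X(0)]$ through the dihedral angle $\alpha$ along their common $(d-1)$-plane (spanned by the $d$ base vertices shared by the two bases), using $d$-separation to bound the distance from $\widetilde x^{*}$ to that $(d-1)$-plane below by a multiple of $t$, hence $\sin\alpha\lessapprox\dist(\widetilde x^{*},L[\widetilde X(0)])/t$, and then the elementary inequality $|a+b|^{d+1}\leq 2^{d}(|a|^{d+1}+|b|^{d+1})$ in place of the parallelogram‑type estimate --- this is precisely the computation of equations~\eqref{equation:hell-froxe-over}--\eqref{equation:sandy-duncan}. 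Summing the resulting $d+1$ ball estimates together with the estimate over $B(x,t)\setminus\bigcup_j B_j$, and dividing by $(2\cdot t)^{d+1}$, gives the ball‑wise bound with $L_{(x,t)}=L[\widetilde X(0)]$, completing the proof. Beyond this, the only novelty compared with Section~\ref{section:doozy-1} is careful bookkeeping of the powers of $t$ produced by the $(d+1)$-st power and by integrating out one versus two $\mu$-factors; the combinatorics and the $d$-separation machinery are unchanged.
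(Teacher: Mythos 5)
Your proposal is correct, and at bottom it runs on the same machinery the paper appeals to, though you take the long way around. The paper never writes out a proof of Proposition~\ref{proposition:leger-true}: the derivation it supports is to take the left-hand inequality of Theorem~\ref{theorem:higher-beta-integral} with $p=d+1$ and combine it with the elementary pointwise bound $\pds_{x_0}^{d+1}(X)\big/\diam(X)^{d(d+1)}\leq c_L^{d+1}(X)$, which follows from $\MM_{d+1}(X)=\dist\left(x_0,L[X(0)]\right)\cdot\MM_d\left(X(0)\right)$ together with $\MM_d\left(X(0)\right)\leq\diam(X)^d$; integrating over $B^{d+2}$ then gives $\widetilde{J}_{d+1}\left(\mu|_{\frac{1}{3}\cdot B}\right)\leq C_8\cdot c_L^{d+1}(\mu|_B)$ in one line. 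What you do instead is re-run Section~\ref{section:doozy-1} and Proposition~\ref{proposition:little-fubini} directly for $c_L^{d+1}$ with exponent $d+1$, i.e., you are in effect reproving Theorem~\ref{theorem:higher-beta-integral} in the case $p=d+1$ with the pointwise domination built in; your observation that no analogue of Lemma~\ref{lemma:hey-diddle-diddle} is needed is precisely that domination (the distance $\dist\left(y,L[\widetilde{X}(0)]\right)$ sits inside $c_L$ once the test point occupies slot $0$). I checked the points you flagged: the exponent bookkeeping works ($(d+1)^2-(d+1)=d(d+1)$ in the analogue of equation~\eqref{equation:krab}, and $(d+1)^2-d^2-d=d+1$ in the analogue of equation~\eqref{equation:mr-ed}); the Chebychev--Fubini and $d$-separation arguments are insensitive to the exponent; the reserved ball $B_0$ plays the role of $B_{d+1}$ verbatim; the dihedral-angle step needs only $\MM_d\geq\omega_{\mu}\cdot t^d$ for the relabeled $(d+1)$-tuple (which $d$-separation provides) plus $|a+b|^{d+1}\leq 2^{d}\left(|a|^{d+1}+|b|^{d+1}\right)$; and the paper's special case $i=0$ (equations~\eqref{equation:star}--\eqref{equation:if-index-is-0}) disappears because $c_L$ is anchored at the test slot. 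The only caveat, shared with the paper's own unstated hypotheses, is that the scale-wise bound of Theorem~\ref{theorem:doozy}-type is available only for $t\leq\diam(\Supp)$, so strictly speaking one should either assume $\diam(B)\leq 3\cdot\diam(\Supp)$ or add a routine remark handling the larger scales.
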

If $d>1$, then the function $\widetilde{J}_{d+1}$ (discussed in the
previous subsection) can be significantly smaller than the required
quantity $J_2$ (especially for very large $d$). This function also
characterizes the $(d+1,d+1)$-geometric property (see
Corollary~\ref{cor:pp}), which includes measures that are not
uniformly rectifiable whenever $d>1$.

\section{Open and New Directions}\label{section:conclusion}

We conclude the work presented here as well as in~\cite{LW-part1} by
suggesting possible directions for extending it. Most of them are
wide open.

\subsubsection*{$L_2(\mu)$ boundedness of $d$-dimensional Riesz transform}
Mattila, Melnikov and Verdera~\cite{MMV96} used the one-dimensional
Menger curvature to show that a one-regular measure $\mu$ is
uniformly rectifiable if and only if the one-dimensional Riesz
transform is bounded in $L_2(\mu)$ (they actually showed it for the
Cauchy kernel where $H=\cmplx$, but their analysis extends easily to
the former case). Farag~\cite{Farag99} showed that direct
generalization of their arguments to higher-dimensional Riesz
transforms are not possible. Nevertheless, one might suggest
alternative strategies to study whether $L_2(\mu)$ boundedness of
the Riesz kernel implies that for all balls $B \subseteq H$ the
quantity $c_{\mathrm{MT}}(\mu|_B)/\mu(B)$ is finite. In~\cite{GL}
numerical experiments have been performed in order to test some
heuristic strategies for such study. However, at this stage they
have not advanced our theoretical understanding of the problem.

\subsubsection*{Calculus of curvatures}
Our work suggests various computational techniques for obtaining
careful estimates of the high-dimensional Menger-type curvatures.
Our writing indicates the lack of some very basic machinery for
those curvatures. Indeed, while our analysis was based on very
elementary ideas, its writing required substantial development.
Nevertheless, the techniques established here could be taken for
granted in subsequent papers. Still more technical tools need to be
developed, and in particular we are interested in tools leading to
solutions of the following straightforward questions: 1.
determination of stability of the algebraic curvature
$c_{\mathrm{alg}}$ when $d>1$ (see
Subsection~\ref{subsubsection:tilde-curvature}); 2. characterization
of the $(p,q)$ geometric property~\cite[Part~IV]{DS93} by the
Menger-type curvature for special cases where $p \neq q$; 3.
relating the Menger-type curvature with the functions $J_p$ for all
$1 \leq p < 2\cdot d/(d-2)$ (we also wonder if such a relation is
different for the two cases: $p<2$ and $p>2$).

\subsubsection*{Rectifiability by Menger-type curvatures}
When $d=1$ L{\'e}ger~\cite{Leger} formulated a modified version of
Theorem~\ref{theorem:funky-salmon} for more general measures and
used it to establish a criterion for rectifiability for another
large class of measures. His analysis immediately extends to our
setting, in particular, it implies that finiteness of the
$d$-dimensional Menger-type curvatures of certain measures is a
sufficient condition for $d$-dimensional rectifiability. We ask
about a necessary condition for rectifiability (as sharp as
possible) formulated in terms of the Menger-type curvatures.

\subsubsection*{Extensions to noisy setting}
Ahlfors regular measures (i.e., $d$-regular) are rather synthetic
for real applications. In~\cite{LW-volume} we extend both
Proposition~\ref{proposition:upper-bound-big-scale} and
Theorem~\ref{theorem:doozy} to a wide class of probability
distributions, in particular, distributions with ``additive noise''
around $d$-dimensional surfaces. An application of this result
appears in~\cite{spectral_theory}.

\subsubsection*{Discrete Curvatures of General Metric Spaces}
Immo Hahlomaa~\cite{H05} has formed a Menger-type curvature of
one-regular measures in metric spaces using the Gromov product.
He~\cite{H07} and also Raanan Schul~\cite{S07} have used it to
characterize uniform rectifiability of such measures, i.e., the
existence of a sufficiently regular curve containing their support.
The Gromov product is indeed a natural quantity for this purpose
since it is quasi-isometric to distances to ``geodesic curves'' in
any fixed ball~\cite{vaisala05}. We inquire about a similar quantity
that can be used to form $d$-dimensional Menger-type curvatures,
where $d>1$, for characterizing uniform rectifiability in some
non-Euclidean metric spaces. In this case uniform rectifiability can
be defined by a parametrizing regular surface~\cite{Semmes99} or
alternatively by big pieces of bi-Lipschitz images at all relevant
scales and locations~\cite{schul-bilip, schul-bilip-supplement}.

%

\subsubsection*{\bf Multi-manifold data modeling and
applications} 
Insights of Proposition~\ref{proposition:upper-bound-big-scale} and
Theorem~\ref{theorem:doozy} are used
in~\cite{spectral_theory,spectral_applied} to solve the problem of
hybrid linear modeling. In this setting, data is sampled from a
mixture of affine subspaces (with additive noise and outliers) and
one needs to cluster the data appropriately.
This problem generalizes to the setting of multi-manifold modeling,
where affine subspaces take the form of manifolds. It also further
extends to the case where the data is embedded in metric space and
not necessarily a Euclidean space (here affine subspaces are
replaced by geodesic surfaces). We believe that any extension of the
theory presented in this paper and in~\cite{LW-part1} to those
general settings could be used to enhance the methods for solving
such problems (see e.g., \cite{ACL-kscc}).

\section*{Acknowledgement}
We thank the anonymous reviewers and the action editor for the very
careful reading of the manuscript and their constructive
suggestions. We thank Raanan Schul for bringing to our attention
some valuable references regarding $d$-dimensional uniform
rectifiability in metric spaces, where $d>1$. This work has been
supported by NSF grant \#0612608.


\end{document}